\documentclass[11pt, reqno]{amsart}
\usepackage{graphicx, amssymb, amsmath, amsthm}
\usepackage[utf8]{inputenc}
\usepackage{epsfig}
\usepackage{hyperref}
\usepackage{comment}
\usepackage{mathrsfs}
\numberwithin{equation}{section}
\usepackage{amsthm}
\usepackage{subfigure}
\usepackage{tikz}
\usepackage{here}
\usepackage{float}
\usepackage{pifont}
\usepackage{enumitem}

\usetikzlibrary{matrix,arrows}
\usetikzlibrary{shapes}
\usetikzlibrary{calc}
\usetikzlibrary{arrows}
\usetikzlibrary{decorations.pathreplacing,decorations.markings}

\usepackage{tikz-cd}

\usetikzlibrary{patterns}

\newtheorem{theorem}{Theorem}[section]

\newtheorem{lemma}[theorem]{Lemma}

\newtheorem{corollary}[theorem]{Corollary}

\newtheorem{proposition}[theorem]{Proposition}
\newtheorem{question}[theorem]{Question}

\theoremstyle{definition}
\newtheorem{definition}[theorem]{Definition}
\newtheorem{remark}[theorem]{Remark}

\makeatletter
\newcommand{\Extend}[5]{\ext@arrow0099{\arrowfill@#1#2#3}{#4}{#5}}
\makeatother

\DeclareMathOperator{\Lip}{Lip}
\DeclareMathOperator{\dist}{dist}
\DeclareMathOperator{\diam}{diam}
\DeclareMathOperator{\vol}{vol}
\DeclareMathOperator{\spt}{spt}
\DeclareMathOperator{\inj}{inj}
\DeclareMathOperator{\Ric}{Ric}
\DeclareMathOperator{\Rm}{Rm}
\DeclareMathOperator{\co}{co}
\DeclareMathOperator{\ddiv}{div}

\newcommand{\ve}{\varepsilon}

\begin{document}

\title[Curvature-free effects]{Curvature-free effects from volume growth and ends-counting and their applications}

\author[Y. Bi]{Yuchen Bi}
\address[Yuchen Bi]{Mathematical Institute, Department of Pure Mathematics, University of Freiburg, Ernst-Zermelo-Stra{\ss}e 1, D-79104 Freiburg im Breisgau, Germany}
\email{yuchen.bi@math.uni-freiburg.de}

\author[J. Zhu]{Jintian Zhu}
\address[Jintian Zhu]{Institute for Theoretical Sciences, Westlake University, 600 Dunyu Road, 310030, Hangzhou, Zhejiang, People's Republic of China}
\email{zhujintian@westlake.edu.cn}

\begin{abstract}
In this paper, we investigate two curvature-free effects from volume growth and ends-counting. Motivated by the problem of extending classical results under Ricci curvature assumptions to other curvature settings, we establish two main theorems. First, any complete non-compact manifold with sublinear lower volume growth admits a smooth bounded mean-concave exhaustion. Second, any complete manifold with infinitely many ends contains escaping geodesic lines outside every compact subset. As applications, we provide new proofs of the Calabi--Yau minimal volume growth theorem and the Cai--Li--Tam finite-ends theorem for nonnegative Ricci curvature, without relying on the Bishop--Gromov volume comparison theorem or analytic tools specific to Ricci curvature. We further extend these results to Riemannian manifolds with nonnegative scalar curvature and K\"ahler manifolds with positive holomorphic sectional curvature. As an important tool, we also provide a rigorous proof of Gromov's mean-convex smoothing theorem.
\end{abstract}

\maketitle

\section{Introduction}
In Riemannian geometry, curvature conditions provide powerful control on the global geometry of complete Riemannian manifolds. Among them, the Ricci curvature has been extensively studied via fundamental tools such as variation formulas, the Laplacian comparison theorem, the Bochner formula, and so on. Many deep results have been established by various authors. Examples include: length results such as the Bonnet--Myers--Cheng diameter theorem \cite{Myers41, Cheng75} and the Cheeger--Gromoll splitting theorem \cite{CG71}; volume results such as the Bishop--Gromov volume
comparison theorem and the Calabi--Yau minimal volume growth theorem \cite{Yau76}; and topological finiteness results such as the Cai--Li--Tam finite-ends theorem \cite{Cai91, Li-Tam92} and Liu's resolution of the three-dimensional Milnor conjecture \cite{Liu13}.

Although assumptions on scalar curvature or holomorphic sectional curvature do not, in general, furnish the same comparison tools as Ricci curvature bounds, they can still impose global restrictions in their respective settings. It is therefore natural to ask whether some of the Ricci-curvature effects
near infinity admit analogues under these weaker curvature assumptions.

This paper is motivated by the following fundamental question:
\begin{question}
Can volume growth and the number of ends be controlled simply by the scalar curvature or the holomorphic sectional curvature, without any direct assumption on the Ricci curvature?
\end{question}

To answer this question, we develop a {\it curvature-independent framework} to extract geometric properties from volume growth and the number of ends, which eventually unifies and extends classical results from the Ricci curvature to the scalar curvature and the holomorphic sectional curvature.

Actually, we are able to establish two curvature-free geometric results on the existence of mean-concave exhaustion and escaping geodesic lines from volume growth and the number of ends, respectively. These results hold for arbitrary complete non-compact Riemannian manifolds, independent of any curvature lower bounds.

\subsection{Mean-concave exhaustion}
Let $(M,g)$ be a complete non-compact Riemannian manifold. We say that $(M,g)$ has {\it sublinear lower volume growth} if
$$\liminf_{r\to +\infty}\frac{\vol_g(B_r(p))}{r}=0$$
for some fixed point $p\in M$.
Note that the choice of the point $p$ is irrelevant in this definition, as the lower limits of the volume ratio at different points coincide.

Throughout the paper, we will always take the following convention: for any smooth hypersurface $\Sigma$ in $(M,g)$ with unit normal $\nu$, the mean curvature $H$ of $\Sigma$ with respect to $\nu$ is defined by $$H=\ddiv_\Sigma \nu.$$
 We say that $\Sigma$ is mean-convex or mean concave with respect to $\nu$ if $H\geq 0$ or $H\leq 0$ respectively.

Our first main theorem establishes the existence of a smooth and bounded mean-concave open exhaustion of $M$ under the assumption of  sublinear lower volume growth.

\begin{theorem}\label{Thm: mean concave exhaustion}
    Assume that $(M,g)$ is a complete Riemannian manifold with sublinear lower volume growth. Then there exists an increasing exhaustion $\{U_l\}_{l\geq 1}$ of $M$ by smooth bounded open subsets such that each boundary $\partial U_l$ is mean-concave with respect to the unit normal pointing toward infinity.
\end{theorem}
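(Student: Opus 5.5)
We will produce the exhaustion by minimizing a weighted perimeter functional inside large balls, and then smooth the resulting sets with Gromov's mean-convex smoothing theorem, which the abstract announces will be proved rigorously. Fix $p\in M$ and write $V(r)=\vol_g(B_r(p))$. By the coarea formula, $V(r)=\int_0^r \mathcal H^{n-1}(\partial B_t(p))\,dt$. Sublinear lower growth then gives a sequence $r_i\to\infty$ with $V(r_i)/r_i\to0$. In particular, for any fixed $R$ and $i$ large, the set of $t\in[R,r_i]$ with $\mathcal H^{n-1}(\partial B_t)\le 2V(r_i)/(r_i-R)$ has positive measure. So there are slices at every large scale whose area is as small as we like.

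For fixed $l$, choose $R_l\to\infty$. We first minimize perimeter among sets of finite perimeter $E$ with $B_{R_l}(p)\subset E\subset B_{r_i}(p)$, for a suitable $r_i\gg R_l$. The minimizer $\Omega$ satisfies $P(\Omega)\le \mathcal H^{n-1}(\partial B_t)$, which is small for a good slice $t$. The key point is that $\partial\Omega$ must stay away from the outer obstacle $\partial B_{r_i}$. Indeed, if $\partial\Omega$ reached near $\partial B_{r_i}$, then $\Omega$ would contain points at distance $\gtrsim r_i-R_l$ beyond $B_{R_l}$. The relative isoperimetric/monotonicity estimates for obstacle-type minimizers would then force perimeter of order one at each of many disjoint scales, contradicting smallness. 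To make this rigorous without curvature assumptions, I would instead use the coarea bound at two scales: choose slices $t_1<t_2$ with tiny area, and minimize perimeter with $B_{t_1}\subset E\subset B_{t_2}$. Comparing $E$ with $E\cap B_s$ for $s\in(t_1,t_2)$, and using the good small-area slice inside, gives that the portion of $\partial E$ touching $\partial B_{t_2}$ can be cut off without increasing perimeter. Hence we may arrange $\partial\Omega\cap\partial B_{t_2}=\emptyset$.

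Once $\partial\Omega$ is detached from the outer constraint, it is a one-sided obstacle minimizer that can only be pushed outward where it touches $\partial B_{t_1}$. Away from the inner obstacle, it is perimeter-minimizing for outward variations, and it is stationary where it is free. Consequently $\partial\Omega$ is mean-concave with respect to the outward normal in the viscosity/weak sense: any outward deformation does not decrease area, so $H\le0$ for the normal pointing to infinity. The regularity theory of obstacle problems (smooth off a closed singular set of codimension $\geq 7$, $C^{1,1}$ near the obstacle) supplies a set with weakly mean-concave boundary. Then $M\setminus\overline\Omega$ has weakly mean-convex boundary with respect to the normal pointing into it, and Gromov's mean-convex smoothing theorem perturbs it into a smooth, strictly mean-convex domain. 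Complementing gives a smooth bounded $U_l\supset B_{R_l}(p)$ with mean-concave boundary, with respect to the normal toward infinity.

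Finally, we choose $R_{l+1}$ larger than the outer radius used at step $l$, so that $U_l\subset B_{R_{l+1}}\subset U_{l+1}$. This yields an increasing exhaustion. The main obstacle is the detachment step: ensuring the minimizer does not touch the outer obstacle, and justifying that the complement has weakly mean-convex boundary across the singular set so that the smoothing theorem applies. I would try first to handle touching points by replacing $E$ with $E\cap B_s$ for a small-area slice $s$ and applying a cut-and-paste comparison.
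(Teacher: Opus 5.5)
\textbf{There is a genuine gap, and it sits in the step you treat as harmless.}

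At a point where $\partial\Omega$ touches the \emph{inner} obstacle, only outward variations $\phi\nu$ with $\phi\ge0$ are admissible ($\nu$ pointing to infinity). Minimality then gives $\int H\phi\ge0$ for all such $\phi$, i.e.\ $H\ge0$ there, so the contact set is mean-\emph{convex} toward infinity. Your sentence ``any outward deformation does not decrease area, so $H\le0$'' reverses the sign. It is contact with the \emph{outer} constraint that gives $H\le0$.

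Nothing in your construction rules out inner contact, and placing the inner obstacle at a small-area slice invites it. Take a warped end $dt^2+f(t)^2g_N$ with obstacles $\{t\le t_1\}\subset E\subset\{t\le t_2\}$. Suppose $f\ge f(t_1)$ on $[t_1,t_2]$ and $f'(t_1)>0$; this is compatible with sublinear growth when $f$ oscillates down to $0$. The projection onto $\{t=t_1\}$ is then $1$-Lipschitz on the region between the obstacles. Hence the minimizer is $\{t\le t_1\}$ itself, whose boundary has $H=(n-1)f'(t_1)/f(t_1)>0$ toward infinity.

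Your slice cut-and-paste targets the wrong constraint. It is also not justified as stated: smallness of $\mathcal H^{n-1}(\partial B_s)$ does not make $\mathcal H^{n-1}(E\cap\partial B_s)$ smaller than $P(E;M\setminus\overline{B_s})$.

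The missing idea is the paper's quantitative collar argument. Fix the obstacle $K$ and a compact collar $V$ adjacent to $\partial K$ in the given end. Use the width constant $c_{width}$ (least mass of a homologically nontrivial cycle in $V$) and the height constant $c_{height}$ (least mass of a connected current with $|\vec H|\le1$ in $\mathring V$ meeting both $\partial_\pm V$); both are positive. Sublinear growth is used to pick a far slice $\{\rho=t_i\}$ whose comparison cost satisfies $c_0<\min\{c_{width},c_{height}\}$. A boundary component touching $K$ is homologically nontrivial in the complement of $K$, so it can neither stay inside $V$ nor cross it. The outer side needs no obstacle: the prescribed function $h=\eta\circ\rho\to-\infty$ acts as a barrier.

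\textbf{A second gap is in the smoothing step for $n\ge8$.} Pure perimeter minimization gives only $H=0$ on the free part. Proposition~\ref{Prop: mean-convex-smoothing} requires a strict bound $H\ge\delta>0$ on $\mathbf\Lambda(\Omega,\Sigma)$ and returns only $H\ge\delta-l^{-1}$, which is useless when $\delta=0$. The paper adds $\delta\,\mathcal H^n(\Omega)$ to the functional to obtain $H\le-\delta$ toward infinity on the free boundary. Because this term pushes the surface toward $K$, $c_0$ must also absorb $\delta\,\mathcal H^n(\{\rho\le t_i\})$.

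Finally, the smoothing theorem needs quasi-regularity, not control ``across the singular set''. The paper shows every point of $\mathbf\Lambda(\Omega,\Sigma)$ is regular using an interior tangent ball and the half-space property. That argument again uses that these are free-boundary points of the minimizer, not points lying on an obstacle.
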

\begin{remark}
The existence of a mean-concave exhaustion from sublinear lower volume growth was previously known up to dimension seven by \cite{Lott25} and an early preprint \cite{Zhu24} of this work, independently. In the present paper, we remove the dimension restriction by establishing the mean-convex smoothing theorem formulated by Gromov \cite{Gro14}, which is of independent interest. As a further application, we use this method to give another solution to Lawson's mean-convex approximation problem \cite[Problem~5.7]{Bro86}, previously solved by Wang \cite[Theorem~1.3]{Wan24}.
\end{remark}

\subsection{Escaping geodesic lines}
Let $M$ be a non-compact smooth manifold. Fix an increasing exhaustion $$\{K_l\}_{l\geq 1}$$ by smooth compact subsets. By an {\it end} of $M$, we mean a decreasing family $$\{V_l\}_{l\geq 1},$$ where each $V_l$ is an unbounded component of the complement $M\setminus K_l$. Denote the number of ends of $M$ by $$e(M).$$
We remark that $e(M)$  is a topological invariant of $M$, independent of the choice of the compact exhaustion $\{K_l\}_{l\geq 1}$. 

Our second main theorem establishes the existence of a geodesic line near infinity in the presence of infinitely many ends.
\begin{theorem}\label{Thm: escaping geodesic line}
    Let $(M,g)$ be a complete, non-compact Riemannian manifold with $$e(M)=+\infty.$$ Then for any compact subset $K\subset M$ there exists a geodesic line $$\gamma:(-\infty,+\infty)\to M\setminus K,$$
    where geodesic line means $$\dist_g(\gamma(s),\gamma(t))=|s-t|$$ for all $s,t\in \mathbb R$.
\end{theorem}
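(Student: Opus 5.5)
We prove the theorem by constructing lines between pairs of ends that are separated by $K$ but lie in a common region far from $K$, and then taking limits of minimizing segments.

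\textbf{Setup.} Fix a point $p$ and $R_0$ with $K\subset B_{R_0}(p)$. Choose a smooth compact domain $D\supset \overline{B_{R_0+1}(p)}$. The complement $M\setminus D$ has only finitely many components, since $\partial D$ is compact; let $W_1,\dots,W_N$ be the unbounded ones. Each end of $M$ eventually lies in exactly one $W_i$. Because $e(M)=+\infty$, the pigeonhole principle gives some component $W=W_i$ that contains infinitely many ends.

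\textbf{Step 1: many ends inside $W$, separated far from $K$.} Take a large compact set $D'\supset D$ such that $W\setminus D'$ has at least two unbounded components $E_1,E_2$ lying in $W$. We can arrange $d(E_j,D)$ as large as we like: choose $D'\supset B_{T}(p)$ and use that infinitely many ends in $W$ forces at least two distinct ends to be separated outside $B_T(p)$.

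\textbf{Step 2: lines from rays.} Pick points $x_n\in E_1$ and $y_n\in E_2$ going to infinity in their respective ends, and minimizing geodesics $\sigma_n$ from $x_n$ to $y_n$. Each $\sigma_n$ must cross the compact set $\partial D'\cap \overline W$, or more generally pass through $D'$.

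The key issue is to ensure $\sigma_n$ avoids $K$. If $\sigma_n$ enters $K$, it must exit $W$ through $\partial D$ and come back. To handle this, I instead minimize length among curves from $x_n$ to $y_n$ that stay in $\overline W$. The resulting curve is locally minimizing in $M$ wherever it touches $\partial W$ only tangentially. This loses global minimality in $M$, though, so it is not the right route.

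A better plan is to choose the two ends cleverly. For each end $E$ in $W$, consider the Busemann-type quantity $\beta(E)=\liminf$ over $x_n\to E$, $y_n$ in a fixed other end, of $d(x_n,y_n)-d(x_n,p)-d(y_n,p)$. With infinitely many ends, a point of $\partial D'$ is crossed infinitely often.

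\textbf{Main obstacle and proposed fix.} The main difficulty is forcing the limiting line to avoid $K$. I would try the following. Let $A=\partial D'$, which is compact, and consider infinitely many ends $E_1,E_2,\dots$ in $W$, each separated from the others by $A$. For each pair $(j,k)$, take the limit of minimizing segments as the endpoints go to infinity. Arzelà–Ascoli applies because each segment must cross $A$, so we can parametrize by a point on $A$. This gives a line $\gamma_{jk}$ joining $E_j$ to $E_k$ and passing within distance $C$ of $A$.

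Suppose every $\gamma_{jk}$ meets $K$. Then each $\gamma_{jk}$ consists of a ray from $K$ to $E_j$ and a ray from $K$ to $E_k$. Since the whole line is minimizing, $d(E_j\text{-side},E_k\text{-side})$ is computed through $K$.

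Fix rays $r_j$ from points of $K$ to $E_j$. The ray $r_j$ passes through $\partial D$ and then through $A$ at a point $a_j$ with $d(a_j,K)\ge T-R_0$. The points $a_j$ lie in the compact set $A\cap\overline W$. Taking a subsequence, $a_j\to a$, so $d(a_j,a_k)<1$ for large $j\neq k$. Now the concatenation $x\to a_j\to a_k\to y$ has length at most $d(x,a_j)+1+d(a_k,y)$. Going through $K$ instead costs $d(x,a_j)+d(a_j,K)+d(K,a_k)+d(a_k,y)\ge d(x,a_j)+d(a_k,y)+2(T-R_0)$. For $T>R_0+1$, this contradicts the minimality of $\gamma_{jk}$.

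Hence some $\gamma_{jk}$ avoids $K$. The care needed is to make the ray decomposition and the distance comparison uniform along the limiting sequences, which should be routine via the triangle inequality.
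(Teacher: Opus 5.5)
Your final mechanism (rays into infinitely many ends, a compactness step producing two rays that pass close to each other far from $K$, then a shortcut length comparison) is the right idea and matches the paper's. However, the decisive comparison is applied to the wrong curve.

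You fix the rays $r_j$, and hence the crossing points $a_j\in A$, independently of $k$. That is what allows the subsequence $a_j\to a$ with $d(a_j,a_k)<1$. You then claim a contradiction with the minimality of $\gamma_{jk}$, which is a line obtained separately as a limit of segments between $E_j$ and $E_k$. The quantity ``$d(x,a_j)+d(a_j,K)+d(K,a_k)+d(a_k,y)$'' is the length of $\gamma_{jk}$ between $x$ and $y$ only if $x,y\in\gamma_{jk}$ and $\gamma_{jk}$ runs $x\to a_j\to K\to a_k\to y$. In other words, it requires the two halves of $\gamma_{jk}$ to be exactly $r_j$ and $r_k$. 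Nothing ensures this: the half of $\gamma_{jk}$ entering $E_j$ depends on $k$, and letting $a_j$ depend on $k$ destroys the pigeonhole step. The ``uniformity'' you call routine at the end is exactly the missing point.

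There is a second problem with the lower bound. For an arbitrary curve from $x=r_j(s)$ through $K$, the bound $d(x,a_j)+d(a_j,K)$ is not available. What one actually has is $d(x,K)\ge s-\diam K$, and with it your threshold $T>R_0+1$ is too small.

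The repair is to drop the contradiction with a pre-existing $\gamma_{jk}$ and build the line for the selected pair directly.
\begin{itemize}
\item Let $\zeta_s$ be minimizing segments from $r_j(s)$ to $r_k(s)$, and write $a_j=r_j(t_j)$, $a_k=r_k(t_k)$ with $t_j,t_k\ge T-R_0$.
\item The shortcut through $a_j,a_k$ gives $L(\zeta_s)<2s-t_j-t_k+1$.
\item If $\zeta_s$ meets the $1$-neighborhood of $K$, then $L(\zeta_s)\ge 2s-2\diam K-2$.
\item Since $\diam K\le 2R_0$, choosing $T\ge 3R_0+2$ keeps every $\zeta_s$ outside that neighborhood.
\item Because $E_j\ne E_k$ are separated by some compact set, the $\zeta_s$ subconverge to a line, and this line avoids $K$.
\end{itemize}

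This is precisely the paper's proof, with one difference in how the rays arise. In the paper, the rays are the forward halves of lines $\gamma_l$ joining disjoint pairs of ends $E_{2l-1},E_{2l}$; these lines are assumed to meet $K$ and are normalized by $\gamma_l(0)\in K$. The closeness $\dist(\gamma_{l_1}(D),\gamma_{l_2}(D))<1$ comes from subconvergence of the $\gamma_l$ rather than from crossing points on a compact $A$, and the new line joins $E_{2l_1}$ to $E_{2l_2}$.

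Two minor points. First, a single compact $A=\partial D'$ cannot pairwise separate infinitely many ends, since $M\setminus D'$ has only finitely many components; but you only need some compact set separating the chosen pair. Second, the reduction to a single component $W$ is unnecessary.
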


\subsection{Applications---old and new}
These two curvature-free results have significant applications. First, they yield alternative proofs of two classical results for nonnegative Ricci curvature:

\begin{theorem}[Calabi, Yau]
Let $(M,g)$ be a complete and non-compact Riemannian manifold with nonnegative Ricci curvature, then $(M,g)$ has at least linear volume growth. That is, we have
$$\liminf_{r\to +\infty}\frac{\vol_g(B_r(p))}{r}>0$$
for all $p\in M$.
\end{theorem}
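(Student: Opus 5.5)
We argue by contradiction, with Theorem~\ref{Thm: mean concave exhaustion} as the main input. Suppose $(M,g)$ is complete, non-compact, has $\Ric\geq 0$, and has sublinear lower volume growth. As remarked above, the choice of base point does not matter. Theorem~\ref{Thm: mean concave exhaustion} then gives an exhaustion $\{U_l\}$ of $M$ by smooth bounded open sets. Each $\partial U_l$ is mean-concave with respect to the outward normal $\nu$, that is, $H=\ddiv_\Sigma\nu\le 0$. We now derive a contradiction from a single $U=U_l$ that contains a fixed geodesic ball.

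The first step is to use non-compactness. Since $M$ is non-compact and complete, there is a ray $\gamma:[0,\infty)\to M$ from a point $p$. Fix $U=U_l$ containing $p$, and let $T$ be the first time at which $\gamma$ leaves $\overline U$. Consider the Busemann-type function $b(x)=t-\dist_g(x,\gamma(t))$ for large $t$, or more simply the distance function to a far point.

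A cleaner route is to use $\rho(x)=\dist_g(x,\partial U)$ inside $U$. Look at the inward equidistant hypersurfaces $\Sigma_s=\{\rho=s\}$, measured with respect to the inward normal $-\nu$. The boundary $\partial U$ has mean curvature $\ge 0$ with respect to the inward normal, so by our convention it is mean-convex toward the interior. The Riccati inequality $\partial_s H\le -|A|^2-\Ric(\nu,\nu)\le -H^2/(n-1)$, together with $\Ric\ge 0$, shows that along every inward normal geodesic the mean curvature stays nonnegative, up to the usual focal-point and barrier issues, which we handle in the viscosity sense. Consequently $\Delta\rho\le 0$ in the barrier sense in $U$: the function $\rho$ is superharmonic. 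We choose this comparison over Bishop--Gromov, in line with the abstract.

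The contradiction then comes as follows. The function $\rho$ is continuous on $\overline U$, vanishes on $\partial U$, and is positive inside. It is superharmonic, which by itself is not a contradiction, so we need a sharper comparison. We use the mean-concave side instead. The region $M\setminus U$ is unbounded, and its boundary is mean-convex when viewed from $M\setminus \overline U$ with the normal $-\nu$ pointing into $U$. Let $r(x)=\dist_g(x,\overline U)$ on $M\setminus U$, with its level sets flowing outward along $\nu$. The Riccati inequality says the outward mean curvature $H$ is nonincreasing, so it stays $\le 0$. Hence $\Delta r=H\le 0$ in the barrier sense, and the area of the level sets $\{r=s\}$ is nonincreasing in $s$. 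By the coarea formula, $\vol(\{0<r<s\})\le s\,|\partial U|$.

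This by itself is only a linear upper bound. To reach the contradiction we use the ray. Take $l$ large and compare with $\gamma$, which leaves $U_l$ at distance $\approx R_l\to\infty$ from $p$. The set $\{r<s\}$ contains a tubular piece around $\gamma|_{[T,T+s]}$. The main obstacle is to make this quantitative, and we do it through the sublinear assumption itself. Choose radii $r_k$ with $\vol B_{r_k}(p)/r_k\to 0$. Choose $U_l$ with $B_1(p)\subset U_l$, and run the Riccati argument from the unit ball side: every point $x\in\gamma$ at distance $s$ from $\gamma(0)$ is joined by minimizing geodesics to $B_1(p)$. A Heintze--Karcher-type comparison for the mean-convex-from-inside equidistants, started from the sphere $\partial B_1(p)$, gives a lower bound $\vol(B_{s+1}(p)\setminus B_1(p))\ge c\,s$ with $c>0$ depending on $B_1(p)$. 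We obtain this by pushing $\partial B_1$ outward along normal geodesics, using that the mean-concave barrier prevents area collapse. Taking $s=r_k$ then contradicts sublinear growth. We expect this last comparison, a uniform positive lower bound on the cross-sectional area, to be the hardest step. We would attempt it first by combining the nonincreasing-area property with the fact that the level sets must separate $p$ from infinity, which forces a positive area lower bound.
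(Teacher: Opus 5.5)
Your reduction to Theorem~\ref{Thm: mean concave exhaustion} and your computation on the unbounded side are correct. For $E=M\setminus U$ and $r=\dist_g(\cdot,\overline U)$, the Riccati inequality with $\Ric\geq 0$ gives $\Delta r\leq 0$ in the barrier sense. Hence $A(s)=\mathcal H^{n-1}(\{r=s\})$ is nonincreasing. This yields only an upper bound, and the lower bound, which is the whole content of the theorem, is never proved.

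None of the mechanisms you suggest supplies it:
\begin{itemize}
\item Heintze--Karcher comparisons under $\Ric\geq 0$ bound the areas of equidistant hypersurfaces from above, not from below.
\item Mean-concavity of $\partial U$ together with $\Ric\geq 0$ only says that $A(s)$ cannot increase. It does nothing to prevent $A(s)\to 0$, so it does not ``prevent area collapse''.
\item Separation gives no area lower bound. Under your contradiction hypothesis, the coarea argument of Section~2 produces hypersurfaces $\{\rho=t_i\}$ separating $K$ from infinity with $\mathcal H^{n-1}(\{\rho=t_i\})\to 0$.
\item The estimate $\vol(B_{s+1}(p)\setminus B_1(p))\geq cs$ is just the desired conclusion restated.
\end{itemize}
Your detour inside $U$ is also not right as stated. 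There $\Delta\rho$ is the mean curvature of $\{\rho=s\}$ with respect to the inward direction of motion. That quantity starts $\geq 0$ and Riccati only makes it decrease, so no sign for $\Delta\rho$ follows.

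The missing ingredient is a rigidity statement that upgrades ``$A(s)$ nonincreasing'' to ``$A(s)$ constant''. The paper gets it in one line from the half-cylinder splitting theorem of Croke--Kleiner \cite{CK92}. $E$ is complete and non-compact, has $\Ric\geq 0$, and has compact boundary that is mean-convex with respect to the normal pointing into $U$. Therefore $E$ is isometric to $N\times[0,+\infty)$, which has linear volume growth.

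To stay within your framework, you can prove this directly:
\begin{itemize}
\item Since $\partial E$ is compact and $E$ is unbounded, there is a ray $\gamma$ in an unbounded component of $E$ with $\gamma(0)\in\partial E$ and $r(\gamma(t))=t$.
\item Its Busemann function $b$ satisfies $\Delta b\geq 0$ in the barrier sense by Laplacian comparison. By the triangle inequality $b\leq r$, with equality along $\gamma$.
\item The strong maximum principle of Calabi and Eschenburg--Heintze, applied to the nonnegative barrier-superharmonic function $r-b$, forces $r\equiv b$ on that component.
\item So $r$ is smooth and harmonic, and its level sets are minimal. The Riccati equation then gives vanishing second fundamental form and $\Ric(\nabla r,\nabla r)=0$.
\item Hence all level sets of $r$ in that component have the same positive area as the corresponding part of $\partial U$, which gives linear growth.
\end{itemize}
Without this step, or Bishop--Gromov, which you wanted to avoid, your argument does not close.
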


\begin{theorem}[Cai, Li--Tam]
Let $(M,g)$ be a complete Riemannian manifold with nonnegative Ricci curvature outside a compact subset. Then $(M,g)$ has finitely many ends.
\end{theorem}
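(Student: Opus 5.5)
We derive the Cai--Li--Tam statement from Theorem~\ref{Thm: escaping geodesic line}, with the Cheeger--Gromoll splitting argument as the curvature input. The point is that the only place we use Ricci curvature is along a geodesic line lying entirely in the region where $\Ric\ge 0$.

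Suppose, for contradiction, that $\Ric_g\geq 0$ outside a compact set $K_0$ but $e(M)=+\infty$. First choose a compact set $K\supset K_0$ large enough, for instance a closed metric ball $\overline{B_R(p)}$ with $R$ large. By Theorem~\ref{Thm: escaping geodesic line} there is a geodesic line $\gamma:\mathbb R\to M\setminus K$. Form the Busemann functions
$$b_\pm(x)=\lim_{t\to+\infty}\bigl(t-\dist_g(x,\gamma(\pm t))\bigr).$$
The triangle inequality gives $b_++b_-\le 0$ everywhere, with equality on $\gamma$.

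The crux is to run the Cheeger--Gromoll argument only on the part of $M$ where $\Ric\ge 0$. The standard fact is that $\Delta b_\pm\geq 0$ in the barrier (or viscosity) sense at every point $x$ where the rays from $x$ asymptotic to $\gamma(\pm\infty)$ avoid the region of negative curvature. I would try to arrange this as follows:
\begin{itemize}
\item Instead of an arbitrary line in $M\setminus K$, use one produced by the construction behind Theorem~\ref{Thm: escaping geodesic line}. That construction is expected to give lines that connect two distinct ends, chosen among the infinitely many ends so that everything happens in far-out components.
\item Restrict to a connected neighbourhood $W$ of $\gamma$, consisting of points whose asymptotic rays stay outside $K_0$.
\item On $W$, apply the Laplacian comparison for distance functions (valid since $\Ric\ge0$ along minimizing segments outside $K_0$). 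This gives $\Delta(b_++b_-)\ge 0$ on $W$.
\item Since $b_++b_-\le 0$ attains its maximum $0$ on $\gamma\subset W$, the strong maximum principle (Calabi/Eschenburg--Heintze version for barrier subharmonic functions) forces $b_++b_-\equiv 0$ on $W$.
\item Hence $b_+$ is harmonic and therefore smooth on $W$. The Bochner formula with $\Ric\ge0$ then gives $\nabla^2 b_+=0$, so $W$ locally splits off a line along $\gamma$.
\end{itemize}

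To reach a contradiction, I would push this local splitting outward. Using unique continuation of the parallel vector field $\nabla b_+$ (where it is defined, since $\Ric\ge0$ there) together with the fact that $\gamma$ separates ends, the splitting region should grow until it meets $\partial K$ or yields a global product structure on a component of $M\setminus K$. A product of a compact cross-section with $\mathbb R$ has exactly two ends. One would then choose $\gamma$ inside a component of $M\setminus K$ containing at least three ends, which exists because $e(M)=\infty$. This is incompatible with the product structure.

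The main obstacle is this localization. The Busemann functions and their asymptotic rays may enter $K_0$, where there is no curvature control. I would first try to replace $b_\pm$ by Busemann-type functions relative to the ends. That is, take $\lim(t-\dist(x,\partial\text{-sphere of radius }t\text{ in that end}))$, built from the far-out parts of the two ends that $\gamma$ joins. The minimizing segments realizing these then stay in $M\setminus K_0$ once $K$ is large. If this fails, a fallback is to use the Calabi--Yau type volume estimate along the ends, combined with Theorem~\ref{Thm: mean concave exhaustion}, to bound the number of ends directly.
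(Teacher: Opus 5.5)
There is a genuine gap. It sits where you locate it, but it is more than a technical obstacle.

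\textbf{The neighbourhood $W$ is never produced.} The barrier inequality $\Delta b_\pm\ge 0$ at a point $x$ requires $\Ric\ge0$ along the asymptotic rays from $x$, i.e.\ along limits of minimizing segments from $x$ to $\gamma(\pm t)$ as $t\to\infty$. The hypothesis that $\gamma$ stays far from $K_0$ only keeps such segments (starting near $\gamma$, ending on $\gamma$) out of $K_0$ while their length is less than about $2\dist(\gamma,K_0)$. For long segments nothing prevents them from dipping into $K_0$. It is not even clear that the set of points whose asymptotic rays avoid $K_0$ contains a neighbourhood of $\gamma$.

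\textbf{A local splitting would not give a contradiction.} Suppose you did get $b_++b_-\equiv0$ and $\nabla^2 b_+=0$ on some $W\supset\gamma$. An isometric splitting of a neighbourhood of a line is perfectly compatible with $\Ric\ge0$; flat regions do exactly this. Continuing the parallel field $\nabla b_+$ outward requires $b_+$ to be harmonic on the larger region, which needs the same missing curvature control.

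\textbf{The endgame does not close.} A component of $M\setminus K$ has nonempty boundary on $\partial K$, so it is never a complete product $N\times\mathbb R$. In the other alternative, where the splitting region reaches $\partial K$, nothing is contradicted.

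\textbf{The fallback fails too.} Theorem~\ref{Thm: mean concave exhaustion} needs sublinear volume growth, which you do not have. A linear lower volume bound on the ends places no restriction on how many ends there are.

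The idea you are missing is that no Busemann functions or splitting are needed once the Ricci curvature is \emph{strictly} positive along the line, because the second variation formula alone constrains lines. Let $\gamma$ be a line and $E_1,\dots,E_{n-1}$ parallel orthonormal fields normal to $\gamma'$. For every compactly supported Lipschitz $\phi$ on $\mathbb R$,
\[
(n-1)\int_{\mathbb R}(\phi')^2\,\mathrm dt\ \geq\ \int_{\mathbb R}\phi^2\,\Ric(\gamma',\gamma')\,\mathrm dt .
\]
Take $\phi\equiv1$ on $[-a,a]$ with linear cut-offs of length $L$, and let $L\to\infty$. This shows $\Ric(\gamma',\gamma')\equiv0$ along any line on which $\Ric\ge0$.

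The paper therefore first perturbs $g$ (citing Ehrlich) so that $\Ric>0$ outside a compact set $K$. This does not change $e(M)$, which is topological. The escaping line in $M\setminus K$ given by Theorem~\ref{Thm: escaping geodesic line} then contradicts the identity at once.

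With only $\Ric\ge0$ along $\gamma$, the identity $\Ric(\gamma',\gamma')\equiv0$ is not contradictory. That is precisely why your approach was pushed into a global splitting argument that the local hypotheses cannot support.
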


Crucially, our alternative proofs avoid the use of the Bishop--Gromov volume comparison theorem and other analytic tools specific to Ricci curvature, making the argument more robust and generalizable. As a consequence, we can generalize the minimal volume growth theorem in the setting of the scalar curvature on Riemannian manifolds and the finite-ends theorem in the setting of the holomorphic sectional curvature on K\"ahler manifolds.

\begin{theorem}\label{Thm: minimal volume scalar}
    Let $(M^n,g)$, $2\leq n\leq 7$, be a complete non-compact Riemannian $n$-manifold with nonnegative scalar curvature and compact boundary. If there is a compact set $K$ such that any closed smooth hypersurface separating $K$ from the infinity admits no smooth metric with positive scalar curvature, then $(M,g)$ has at least linear volume growth.
\end{theorem}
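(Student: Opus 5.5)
We argue by contradiction. Suppose $(M,g)$ has sublinear lower volume growth. Since $\partial M$ is compact, we may apply Theorem~\ref{Thm: mean concave exhaustion}, or rather its proof, which is local near infinity and unaffected by a compact boundary. This yields an exhaustion $\{U_l\}$ by smooth bounded open sets. Each $\partial U_l$ is mean-concave with respect to the normal pointing toward infinity, and we take $l$ large enough that $U_l\supset K\cup\partial M$. Then $\partial U_l$ is a closed smooth hypersurface separating $K$ from infinity.

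The plan is to use this region, together with its mean-concave outer boundary, as a barrier for a variational problem. The goal is to produce a closed stable minimal hypersurface $\Sigma$, or more precisely a stable $\mu$-bubble, that separates $K$ from infinity. On such a $\Sigma$, nonnegative scalar curvature of $M$ would force $\Sigma$ to admit a metric of positive scalar curvature, or to be Ricci-flat/scalar-flat in the rigid case.

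\emph{Step 1.} Minimize area in the homology class of hypersurfaces separating $K$ from infinity inside the compact region $\overline{U_l}\setminus U_{l_0}$. Here $U_{l_0}$ is a fixed neighbourhood of $K\cup\partial M$. The mean-concavity of $\partial U_l$ (viewed from inside, it is mean-convex toward $U_l$ in the barrier sense) prevents minimizers from touching the outer boundary. The inner side is the main obstacle: nothing forces $\partial U_{l_0}$ to be a barrier, so a minimizer might stick to it. To handle this, I would replace area minimization by a $\mu$-bubble functional
\[
\mathcal A(\Omega)=\mathcal H^{n-1}(\partial^*\Omega)-\int (\chi_\Omega-\chi_{\Omega_0})h\,d\mathcal H^n .
\]
The weight $h$ is chosen to blow up to $+\infty$ near $\partial U_{l_0}$, so that the inner boundary becomes an effective barrier. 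We also want $h$ to be small and to satisfy $|\nabla h|$ small on most of the annulus, so that $h^2-2|\nabla h|\geq 0$. This is possible only because the annulus between $\partial U_{l_0}$ and $\partial U_l$ can be taken with large width, since $U_l$ exhausts $M$. Near $\partial U_l$ we take $h\le0$, so that the barrier condition there is exactly the mean-concavity supplied by Theorem~\ref{Thm: mean concave exhaustion}. The dimension bound $n\le7$ guarantees that the minimizer $\Sigma=\partial\Omega$ is smooth.

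\emph{Step 2.} Apply the second variation together with the Schoen--Yau rearrangement. This gives
\[
\int_\Sigma |\nabla\phi|^2+\tfrac12(R_\Sigma - R_M -|A|^2 - h^2-2\langle\nabla h,\nu\rangle)\phi^2\geq0 .
\]
With $R_M\ge0$ and $h^2-2|\nabla h|\ge0$, the operator $-\Delta_\Sigma+\tfrac12 R_\Sigma$ is nonnegative. Conformal deformation (for $n-1\ge3$), or Gauss--Bonnet for $n=3$, then gives each component of $\Sigma$ a metric of positive scalar curvature, unless equality holds everywhere. In the equality case, the standard argument shows the first eigenfunction yields a scalar-flat metric that is not Ricci-flat; it can be perturbed to positive scalar curvature by Kazdan--Warner, or rigidity is used. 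The case $n=2$ is handled separately: there one gets a curve, and the conclusion contradicts $M$ being non-compact with nonnegative Gauss curvature and sublinear growth. Alternatively, one observes directly that a closed curve carries no scalar curvature and the hypothesis is vacuous-consistent, so this case should be checked carefully.

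\emph{Step 3.} At least one component of $\Sigma$ separates $K$ from infinity, since $\Omega$ contains $U_{l_0}$ and is bounded. This contradicts the hypothesis. The main technical point is therefore the construction of $h$ in Step 1 with $h^2-2|\nabla h|>0$ across a wide annulus, the correct sign at both barriers, and non-positivity at the mean-concave outer boundary. I expect a function of the form $h=\cot(\rho)$-type, adapted to a smoothed distance function on the annulus, to work.
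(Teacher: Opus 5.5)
There is a genuine gap, and it begins with the orientation of the barriers.

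\textbf{The outer boundary is not a barrier.} Mean-concavity of $\partial U_l$ with respect to the normal $\nu$ pointing toward infinity means $H_\nu\le 0$. So the mean curvature vector points \emph{out of} $U_l$, and pushing $\partial U_l$ into $U_l$ increases area. Hence $\partial U_l$ is a barrier for variational problems in the exterior $M\setminus U_l$, not in your annulus $\overline{U_l}\setminus U_{l_0}$. If a $\mu$-bubble $\Sigma\subset\overline{U_l}$ touches $\partial U_l$, comparison only gives $h=H_\Sigma\ge H_{\partial U_l}$ there. That is no contradiction when $h\le 0$; you would need $h<H_{\partial U_l}\le 0$, for instance $h\to-\infty$. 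Conversely, it is the inner boundary $\partial U_{l_0}$, which you treat as the obstacle, that mean-concavity turns into a barrier for the annulus. Your weight profile is upside down.

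\textbf{The profile you ask for does not exist.} Suppose $h\to+\infty$ on $\partial U_{l_0}$ and $\frac{n}{n-1}h^2\ge 2|\nabla h|$; your condition $h^2\ge 2|\nabla h|$ is even stronger. Along any unit-speed curve issuing from $\partial U_{l_0}$ one has $(1/h)'\le \frac{n}{2(n-1)}$, hence $h\ge \frac{2(n-1)}{ns}>0$ at arclength $s$. So $h$ never reaches $0$, however wide the annulus. A $\cot$-type profile that crosses zero requires $R_M\ge\sigma>0$. This is Gromov's band-width phenomenon: with only $R_M\ge 0$, bands can be infinitely wide.

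\textbf{The equality case cannot be waved away with ``or rigidity is used''.} Consider the flat half-cylinder $T^{n-1}\times[0,+\infty)$. It has $R=0$ and is exhausted by regions with totally geodesic (hence mean-concave) boundaries. It satisfies the no-PSC hypothesis, yet contains no separating hypersurface admitting a PSC metric. Your Steps 1--3 use only $R_M\ge 0$, the hypothesis, and a mean-concave exhaustion, so they cannot be correct as stated. The sublinear growth must re-enter through strictness or through a rigidity statement.

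\textbf{How the paper handles it.} The paper works in the exterior $E=M\setminus U$, whose boundary is mean-convex, and proves the half-cylinder splitting of Proposition~\ref{Prop: half-splitting}. If $g$ is Ricci-flat, or $n=2$, Croke--Kleiner gives $E\cong N\times[0,+\infty)$, which has linear growth. (For $n=2$, $R\ge 0$ means $\mathrm{Ric}\ge 0$; this settles the case you left open.) Otherwise Kazdan's deformation makes $R>0$ while keeping $\partial E$ mean-convex. A $\mu$-bubble in $E$ then yields a separating hypersurface carrying a PSC metric. It uses $h<0$, $h'<0$, $h\to-\infty$ at finite $\rho$, with $\partial E$ as the inner barrier and $\min R$ absorbing $h'$ near $\partial E$.

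\textbf{An alternative repair.} You could instead keep the strict bound $H\le -\delta/2$ that the proof of Theorem~\ref{Thm: mean concave exhaustion} actually produces. In $E$, take $h(t)=-(\varepsilon^{-1}-bt)^{-1}$ with $\varepsilon<\delta/2$ and $b<\frac{n}{2(n-1)}$. This gives $\frac{n}{n-1}h^2+2h'>0$ using only $R\ge 0$. The inner barrier works because $h(0)=-\varepsilon>H_{\partial E}$.
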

\begin{remark}
The dimension restriction comes from the construction of a separating stable $\mu$-bubble admitting positive scalar curvature to derive contradiction, and can be relaxed by using generic regularity theory from geometric measure theory. On the other hand, similar results can be established in all dimensions for spin manifolds having certain spinorial obstruction around infinity. For example, together with Theorem \ref{Thm: mean concave exhaustion}, the spinorial obstruction used in \cite[Section 2.3]{Lott25} yields a linear lower bound for the volume growth of \(M=\mathbb T^{n-1}\times[0,+\infty)\) with nonnegative scalar curvature in all dimensions. More generally, the same conclusion holds for \(N^{n-1}\times[0,+\infty)\), where \(N\) is any closed oriented enlargeable manifold, by adapting the proof of \cite[Theorem~D]{BiZhu26}. Indeed, the separating stable $\mu$-bubble is an enlargeable AM--PI space with a codimension-seven singular set. Although stability provides only a spectral inequality, the conformal blow-up and weight modification in \cite[Proposition~3.11 and the proof of Proposition~4.7, Step~4]{BiZhu26} convert it into the required pointwise weighted scalar-curvature inequality on the complete regular part. The remainder of that proof then applies.
\end{remark}

In particular, we have the following corollaries.
\begin{corollary}\label{Cor: contractible}
 Let $(M^3,g)$ be a complete and contractible Riemannian $3$-manifold with positive scalar curvature, which has sublinear lower volume growth. Then $M$ is homeomorphic to $\mathbb R^3$.
\end{corollary}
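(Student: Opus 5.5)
We argue by contradiction through Theorem~\ref{Thm: minimal volume scalar}, combined with the known topological classification of contractible $3$-manifolds with positive scalar curvature. Since $M$ is contractible and non-compact (a closed contractible $3$-manifold cannot exist, its top homology being nonzero), it suffices to show that $M$ is \emph{simply connected at infinity}. A contractible open $3$-manifold that is simply connected at infinity is homeomorphic to $\mathbb R^3$: this is the classical result of C.~H.~Edwards combined with the Poincar\'e conjecture (Perelman). So we suppose $M$ is not simply connected at infinity and derive a contradiction with the sublinear lower volume growth.

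The key step is to produce a compact set $K$ satisfying the hypothesis of Theorem~\ref{Thm: minimal volume scalar} with $n=3$. In dimension three, a closed surface admits a metric of positive scalar curvature if and only if each of its components is a sphere, by Gauss--Bonnet. We therefore need a compact $K\subset M$ such that every closed embedded surface $\Sigma$ separating $K$ from infinity has a component of positive genus.

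Suppose instead that for every compact $K$ some separating $\Sigma$ consists only of spheres. Since $M$ is contractible, hence irreducible by the Poincar\'e conjecture, each sphere bounds a ball. The spheres separating $K$ from infinity therefore bound compact regions containing $K$, and these regions are unions of balls (with punctured-ball complements removed) that are simply connected and contain $K$. Taking such regions for an exhausting sequence of compact sets shows that $M$ is an increasing union of $3$-balls. A loop outside a large ball lies in the complement of a smaller concentric ball, which is a collared $S^2\times[0,\infty)$ region, so $M$ is simply connected at infinity, indeed $M\cong\mathbb R^3$. This contradicts our assumption.

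Hence the required $K$ exists, and since the scalar curvature is positive, Theorem~\ref{Thm: minimal volume scalar} applies (with empty boundary, as allowed) to give at least linear volume growth. This contradicts the sublinear lower volume growth hypothesis. The main obstacle is the topological step: carefully handling disconnected separating surfaces whose sphere components may be nested. Irreducibility together with an innermost-sphere argument should resolve it, but one must check that the regions bounded by spheres together contain $K$ and exhaust $M$.
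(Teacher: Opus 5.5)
Your proposal is correct, and its skeleton matches the paper's proof. Both reduce to simple connectivity at infinity via the Stallings/Edwards-type criterion, and both apply Theorem~\ref{Thm: minimal volume scalar} contrapositively to get, around every compact set, a separating surface whose components are all spheres.

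The only real difference is the topological step.

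\textbf{The paper's route.} It takes a bounded region $V_l\supset K$ with $\partial V_l$ a union of spheres. It fills a loop $\gamma\subset M\setminus\tilde K$ by a disk $D$, using contractibility, and makes $D$ transverse to $\partial V_l$. It then replaces the (outermost) subdisks of $D$ cut out by the circles of $D\cap\partial V_l$ with disks on the sphere components of $\partial V_l$. This uses neither irreducibility nor any information about how the spheres sit inside one another in $M$. So the nesting problem you flag as the main obstacle never arises there.

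\textbf{Your route.} You spend the Poincar\'e conjecture on irreducibility, and the nesting issue resolves as you expect:
\begin{itemize}
\item balls bounded by disjoint spheres are either nested or disjoint;
\item the complement of the union of the outermost closed balls is connected, unbounded and disjoint from $\Sigma$;
\item hence a connected $K$ lies in a single ball $B$.
\end{itemize}
From there, van Kampen across $\partial B\cong S^2$ already gives $\pi_1(M\setminus\mathrm{int}\,B)=1$. Moreover, an exhaustion by such balls yields $M\cong\mathbb R^3$ directly by Brown's monotone union theorem, bypassing the simply-connected-at-infinity criterion altogether. What your version buys is this stronger intermediate conclusion, an exhaustion by $3$-balls. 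The paper's version is lighter on $3$-manifold topology.

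\textbf{One small imprecision.} $\mathbb{RP}^2$ also admits positive scalar curvature, so your Gauss--Bonnet sentence is false for abstract closed surfaces. It should be restricted to closed surfaces embedded in $M$. These are two-sided, hence orientable, because $M$ is simply connected.
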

\begin{remark}
It is an open question whether a complete and contractible Riemannian $3$-manifold with positive scalar curvature is homeomorphic to $\mathbb R^3$. The interested audience can refer to \cite{Wang24,Wang25,BGM25,CLX26,Lott26} for backgrounds and related works.
\end{remark}

\begin{corollary}\label{Cor: aspherical at infinity}
Let $(M^n,g)$, $2\leq n\leq 5$, be a complete Riemannian manifold with nonnegative scalar curvature, which is aspherical outside some compact set $K$. Then $(M,g)$ has at least linear volume growth.
\end{corollary}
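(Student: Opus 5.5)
We reduce Corollary \ref{Cor: aspherical at infinity} to Theorem \ref{Thm: minimal volume scalar}, applied with the given compact set $K$ (enlarged if needed to a smooth compact domain). Since $2\le n\le 5\le 7$, the dimension hypothesis holds. What remains is the obstruction hypothesis: every closed smooth hypersurface $\Sigma$ separating $K$ from infinity must admit no metric of positive scalar curvature. We argue by contradiction and produce a closed aspherical manifold (or a component of $\Sigma$ mapping to one) carrying positive scalar curvature. This is impossible in dimensions $n-1\le 4$ by the resolution of the aspherical conjecture in low dimensions (Schoen--Yau for dimension $3$ together with Chodosh--Li and Gromov for dimensions $4,5$). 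Note that for $n=2$ the hypersurface is a union of circles, for which positive scalar curvature is vacuous. In that case the statement instead follows directly, since Theorem \ref{Thm: minimal volume scalar} then concerns curves and we interpret PSC on $1$-manifolds as impossible. So one should check that convention; the interesting range is $n=3,4,5$.

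\textbf{Key step.} Let $\Sigma\subset M\setminus K$ be closed and separate $K$ from infinity. Take the component $\Sigma_0$ adjacent to some unbounded region. Its fundamental class must be nontrivial in an appropriate sense: since $\Sigma$ separates $K$ from infinity, $\Sigma$ is homologous in $M\setminus K$ to a large sphere-like cross section. We need a map $f\colon\Sigma_0\to X$ to an aspherical space $X=M\setminus K$ (or its relevant component) which is not homologically trivial, i.e.\ $f_*[\Sigma_0]\ne0$ in $H_{n-1}(X)$, or at least with nonzero degree-type pairing. The generalized aspherical results (Chodosh--Li, Gromov) state that a closed manifold of dimension $\le 5$ admitting a map of nonzero degree to an aspherical manifold—more generally with $f_*[\Sigma_0]\neq 0$ in $H_{n-1}(X;\mathbb Z)$ for an aspherical $X$—carries no PSC metric. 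I would invoke that form.

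\textbf{Nontriviality.} To get $f_*[\Sigma]\ne 0$ in $H_{n-1}(M\setminus K)$, I pair with a cohomology class. Pick a proper ray $\sigma$ from $\partial K$ to infinity; since $\Sigma$ separates $K$ from infinity, the intersection number of $\Sigma$ with $\sigma$ is $\pm1$. This gives a class in $H^{n-1}$ of $M\setminus K$ (locally finite/Poincaré–Lefschetz dual to the ray), evaluating to $1$ on $[\Sigma]$. Some component of $\Sigma$ therefore carries nonzero image. By replacing $\Sigma$ with one such component $\Sigma_0$, which must then carry PSC if $\Sigma$ does, we obtain the contradiction. Orientability can be handled by passing to $\mathbb Z/2$ coefficients or by using orientation covers.

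\textbf{Main obstacle.} The main difficulty is this homological step: $M\setminus K$ is noncompact, so its top-dimensional homology in degree $n-1$ is subtle, and we must apply the aspherical PSC obstruction in the correct "nonzero homology class in an aspherical space" form rather than only for closed aspherical targets. I would first try using the Chodosh--Li/Gromov statement for maps to aspherical (possibly noncompact) spaces with nonvanishing image of the fundamental class, detected by the ray duality above.
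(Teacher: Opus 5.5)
Your reduction to Theorem~\ref{Thm: minimal volume scalar} is the same as the paper's. Your homological step is also sound. Orient the boundary of the bounded region containing $K$; a proper ray from $K$ to infinity meets it with algebraic intersection number $1$. By Poincar\'e--Lefschetz duality, some component $\Sigma_0$ of any separating hypersurface is then nontrivial in $H_{n-1}(M\setminus K;\mathbb Q)$. The gap is the obstruction you invoke next.

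As written, the claim that a closed $N$ with $f\colon N\to X$, $X$ aspherical and $f_*[N]\neq 0$ in $H_*(X;\mathbb Z)$, carries no PSC metric is false. The round lens space $L(p,1)$ has PSC, yet its classifying map to $B\mathbb Z/p$ is nonzero on $H_3(\,\cdot\,;\mathbb Z)\cong\mathbb Z/p$. In a corrected form (rational coefficients, torsion-free $\pi_1$) it is the homological version of the aspherical conjecture, and Chodosh--Li and Gromov do not prove that in dimension $4$. Their theorems concern closed aspherical manifolds, and closed manifolds admitting nonzero-degree maps to closed aspherical manifolds of the same dimension. Your $\Sigma_0^{n-1}$ is only homologically nontrivial in the open $n$-manifold $M\setminus K$. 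Nothing gives a nonzero-degree map from $\Sigma_0$ to a closed aspherical $(n-1)$-manifold, so those results do not apply. The $\mathbb Z/2$ fallback for nonorientable $M$ has the same problem.

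The missing ingredient is an obstruction specific to embedded hypersurfaces. If $M$ has dimension at most five and is aspherical outside $K$, then closed hypersurfaces separating $K$ from infinity admit no PSC metric. This is exactly what the paper imports from the He--Zhu work it cites, which uses that $\Sigma$ is an embedded separating hypersurface in an aspherical $n$-manifold, not merely a map into it. With that citation in place of the generalized aspherical statement, your argument becomes the paper's one-line proof.

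Without it, your route still closes for $n\le 4$. For $n\le 3$ this is trivial: circles carry no PSC, and PSC surfaces are $S^2$ or $\mathbb{RP}^2$. For $n=4$, geometrization shows every closed PSC $3$-manifold has fundamental group a free product of finite groups and copies of $\mathbb Z$ or $\mathbb Z\times\mathbb Z/2$. So any map into the aspherical, hence torsion-free, $M\setminus K$ factors up to homotopy through a graph and kills $H_3$. The case $n=5$, with $4$-dimensional hypersurfaces, is left unproved.
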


\begin{theorem}\label{Thm: finite ends holomorphic}
    Let $(M,g,J)$ be a complete and non-compact K\"ahler manifold with positive holomorphic sectional curvature outside a compact subset \(K\). Then we have 
    $$e(M)<+\infty.$$
\end{theorem}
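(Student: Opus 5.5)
Argue by contradiction and reduce to Theorem \ref{Thm: escaping geodesic line}. Suppose $e(M)=+\infty$. Enlarge $K$ to a compact set $K'$ containing $K$ together with its $1$-neighborhood (any compact enlargement will do). Theorem \ref{Thm: escaping geodesic line} then gives a geodesic line $\gamma:\mathbb R\to M\setminus K'$. So $\gamma$ lies entirely in the region where the holomorphic sectional curvature is positive. It therefore suffices to prove the following claim: a geodesic line cannot lie entirely in a region of a Kähler manifold where the holomorphic sectional curvature is positive.

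The natural tool is the second variation of length along $\gamma$ in the holomorphic direction $J\dot\gamma$. Since $\gamma$ is a line, every compactly supported variation of a segment $\gamma|_{[-T,T]}$ with fixed endpoints has nonnegative second variation. The field $J\dot\gamma$ is parallel, because $\nabla J=0$ on a Kähler manifold, and it is a unit field orthogonal to $\dot\gamma$. Take the variation field $V=\varphi(t)J\dot\gamma$, where $\varphi$ is a cutoff with $\varphi=1$ on $[-T/2,T/2]$, $\varphi=0$ outside $[-T,T]$, and $|\varphi'|\le 4/T$. The second variation formula gives
\[
0\le \int_{-T}^{T}\bigl(\varphi'^2-\varphi^2 H(\dot\gamma)\bigr)\,dt ,
\]
where $H(\dot\gamma)=R(\dot\gamma,J\dot\gamma,J\dot\gamma,\dot\gamma)$ is the holomorphic sectional curvature of the plane spanned by $\dot\gamma$ and $J\dot\gamma$. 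The first term is at most $C/T$.

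The main obstacle is that positivity of $H$ is only pointwise, so $H(\dot\gamma)>0$ along $\gamma$ gives no uniform lower bound. The estimate therefore yields only
\[
\int_{-T/2}^{T/2}H(\dot\gamma(t))\,dt\le C/T\to0 .
\]
Letting $T\to\infty$ gives $\int_{\mathbb R}H(\dot\gamma)\,dt\le 0$, with $H(\dot\gamma)>0$ everywhere along $\gamma$. This is a contradiction. The proof never needs uniform positivity, because the total integral over the whole line is forced to be nonpositive while the integrand is strictly positive.

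The remaining task is to check that the escaping line really stays inside $M\setminus K$, where the curvature hypothesis holds. This is exactly what Theorem \ref{Thm: escaping geodesic line} provides when it is applied with $K$ itself. The variation is supported near $\gamma$, but only curvature along $\gamma$ enters the second variation formula. Hence $e(M)<+\infty$.
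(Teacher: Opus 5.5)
Your proposal is correct and follows the paper's proof. You assume $e(M)=+\infty$, take an escaping geodesic line in $M\setminus K$ from Theorem~\ref{Thm: escaping geodesic line}, and insert the parallel field $J\dot\gamma$ (parallel because $\nabla J=0$) into the second variation formula to contradict $H(\dot\gamma)>0$. Your cutoff estimate $\int_{-T/2}^{T/2}H(\dot\gamma)\,dt\le C/T$ simply spells out the step the paper compresses into ``$H(\gamma')$ vanishes along $\gamma$,'' and enlarging $K$ to $K'$ is harmless but unnecessary.
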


The rest of this paper will be organized as follows.

In Section 2, we prove our first main theorem on the existence of smooth bounded mean-concave exhaustions. We develop the soap bubble method with an inner obstacle to construct minimizing $h$-hypersurfaces and derive mean-concave property for the minimizers. To handle the singularity issues in higher dimensions, we apply the mean-convex smoothing technique, whose rigorous proof is postponed to Section 3.

Section 3 is devoted to proving Gromov's mean-convex smoothing theorem, which is of independent interest and crucial for removing singularities in our construction of mean-concave exhaustion. We recall the theory of sets with positive reach from Federer's foundational work, and use the Sard-type theorems for distance functions to show that quasi-regular boundaries can be approximated by sets with positive reach. We then use viscosity barriers in the sense of Ilmanen, together with a prescribed-mean-curvature minimization argument and the Allard regularity, to obtain smooth approximations of mean-convex boundaries with controlled mean curvature. At the end of the section, we apply the smoothing argument to stationary boundaries, yielding a solution to Lawson's mean-convex approximation problem.

In Section 4, we give a proof of our second main theorem on the existence of escaping geodesic lines outside any compact subset.

Section 5 presents several applications of our curvature-free results. In the first part, we present new proofs of two classical theorems for manifolds with nonnegative Ricci curvature mentioned above. In the second part, we extend these results to broader curvature settings. 

\bigskip

\subsection*{Acknowledgements}
The second-named author is partially supported by National Key R\&D Program of China 2023YFA1009900, NSFC Grant No. 12401072, Zhejiang Provincial Natural Science Foundation of China under Grant No. LQKWL26A0101 and the start-up fund from Westlake University.

\section{Mean-concave exhaustion}
In this section, we always assume that $(M^n,g)$ is a complete non-compact Riemannian $n$-manifold with sublinear lower volume growth, that is,
$$\liminf_{r\to+\infty}\frac{\vol_g(B_r(p))}{r}=0$$
for some point $p\in M$.
 Let $K\subset M$ be a fixed compact subset. By enlarging $K$ if necessary, we may assume that $K$  is connected with smooth boundary, and that every connected component of the complement $M\setminus K$ is unbounded.

The goal of this section is to prove Theorem \ref{Thm: mean concave exhaustion}, which is an immediate consequence of  the following proposition.

\begin{proposition}\label{Prop: mean concave neighborhood}
There exists a smooth bounded open neighborhood $U$ of $K$ such that $\partial U$ is mean-concave with respect to the unit normal pointing toward infinity.
\end{proposition}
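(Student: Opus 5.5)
We will construct $U$ by a soap-bubble minimization with an inner obstacle, following the outline announced in the introduction. Fix a base point $p$ and choose radii $r_i\to+\infty$ with $\vol_g(B_{r_i}(p))/r_i\to 0$. For a large $r=r_i$, take a smooth function $\rho$ approximating the distance to $K$, with $|\nabla\rho|\le 2$. We will work in the annular region $\Omega_r=B_r(p)\setminus K$. On $\Omega_r$ we introduce a prescribed mean curvature function $h=h(\rho)$ that is positive, tends to $+\infty$ as one approaches $\partial B_r(p)$ (and possibly near $\partial K$), and acts as a barrier. We then minimize the functional
$$\mathcal A(\Omega)=\mathcal H^{n-1}(\partial^*\Omega)-\int_M(\chi_\Omega-\chi_{\Omega_0})\,h\,d\mathcal H^n$$
among Caccioppoli sets $\Omega$ with $K\subset\Omega\subset B_r(p)$. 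The inner obstacle is $K$ and the outer barrier is the blow-up of $h$.

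The sublinear growth enters when we show that the minimizer stays away from the inner obstacle. Compare with the level sets $\{\rho<t\}$. By coarea and sublinear volume growth, there are levels $t\in(r/4,r/2)$ whose area is at most $C\vol(B_r)/r$, which is arbitrarily small. We choose $h$ with total integral $\int h$ over the relevant region bounded by a constant times that small quantity, or scaled like $1/r$. Then the competitor $\{\rho<t\}$ has energy below any set whose boundary touches $\partial K$, provided $\partial K$ has area bounded below. This should force the minimizer's reduced boundary to lie in $\{\rho>0\}$, strictly away from $K$. At the same time, the barrier keeps it off $\partial B_r$. Hence the minimizer is a free, unconstrained critical point in the interior. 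By the first variation, it is an $h$-hypersurface with mean curvature $H=-h<0$ with respect to the normal pointing toward infinity, with our sign convention for $\mathcal A$. So it is strictly mean-concave wherever regular.

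Regularity of $h$-minimizers holds away from a closed singular set of codimension at least $8$. In dimensions $n\le7$ we are then done: take $U$ to be the minimizer, noting that it is bounded, contains $K$, and is smoothly mean-concave. For $n\ge8$, the boundary is only quasi-regular, with strict mean-concavity $H\le -h<0$ holding in the viscosity/barrier sense across the singular set. Complementing, $M\setminus\overline U$ has a mean-convex quasi-regular boundary with a strictly positive lower bound. We then invoke the mean-convex smoothing theorem of Section 3 (Gromov's), applied to the complementary domain. It gives a smooth approximation with mean curvature still positive, lying in a small neighborhood and hence still enclosing $K$ and bounded. This yields the smooth $U$.

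The main obstacle is the obstacle-avoidance estimate: the choice of $h$ must be delicate enough that the minimizer never touches $K$, yet $h$ must still be positive so that strictness survives smoothing. If the direct comparison is insufficient, we will first try to choose $h=\epsilon\,\phi(\rho)$ with $\epsilon\to0$ tuned to $\vol(B_r)/r$. We would also use the monotonicity formula to show that any boundary portion near $\partial K$ carries a definite area, which contradicts the small-area competitor.
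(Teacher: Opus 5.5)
Your proposal has a genuine gap, and it begins with the signs. With the paper's convention $H=\ddiv_\Sigma\nu$, an outward normal displacement with speed $\phi$ changes your functional $\mathcal A(\Omega)=\mathcal H^{n-1}(\partial^*\Omega)-\int(\chi_\Omega-\chi_{\Omega_0})h$ by $\int(H-h)\phi$. So its critical boundaries satisfy $H=h>0$ with respect to the normal pointing toward infinity: they are mean-convex, not mean-concave. Independently of how you normalize $H$, the condition $h\to+\infty$ near $\partial B_r(p)$ \emph{rewards} enclosing that region, so it pulls the minimizer onto $\partial B_r(p)$ rather than repelling it. Already in $\mathbb R^n$ with $h$ constant and $r$ large, the constrained minimizer is $B_r(p)$ itself, and the only critical spheres, of radius $(n-1)/h$, are mean-convex. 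To get mean-concavity you must penalize enclosed volume. The paper does this with $\mathcal A^{h,\delta}_E(\Omega)=\mathcal H^{n-1}(\partial^*\Omega\cap\mathring W)+\int_\Omega(\delta-h)\,\mathrm d\mathcal H^n$, where $h=\eta\circ\rho\le0$, $h\equiv0$ on $\{\rho\le t_i\}$, and $h\to-\infty$ at $\{\rho=t_i^*\}$; this gives $H=h-\delta\le-\delta$ on the free boundary. But once the sign is right, the minimizer is driven \emph{inward} onto $K$, and your optional inner barrier near $\partial K$ is no longer available. Such a barrier would need a weight rewarding inclusion there, i.e.\ prescribed $H>0$ on any boundary portion lying there. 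So avoiding the obstacle must come entirely from an area estimate.

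That estimate is where your argument fails. It is not true that every admissible set whose boundary touches $\partial K$ has energy bounded below in terms of $\mathcal H^{n-1}(\partial K)$. Removing from $\{\rho<t\}$ a thin tube that ends on $\partial K$ changes the energy by an arbitrarily small amount. Nor can you apply monotonicity at a contact point, because the obstacle is active there and there is no mean curvature bound. What is missing is a lower bound that uses the \emph{minimality} of $\Omega_E$ in a fixed region where the boundary is free. The paper fixes a compact collar $V\subset\{\rho<t_i\}$ between $\partial_-V=\partial E$ and a separating hypersurface $\partial_+V=\Sigma_{sep}$. On $V$ we have $h\equiv0$, so the free boundary has $|\vec H|=\delta<1$ there. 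It then uses two $r$-independent constants. The first is $c_{width}>0$, the least mass of a homologically nontrivial cycle in $V$. The second is $c_{height}>0$, the least mass of a connected integral current with $|\vec H|\le1$ in $\mathring V$ meeting both $\partial_\pm V$; it is positive by monotonicity at a fixed separating hypersurface in the middle of $V$. A component of $\partial\widehat\Omega_E$ touching $K$ is homologically nontrivial, so if its mass is below $c_{width}$ it must leave $V$; being connected, it then crosses $V$ and has mass at least $c_{height}$. Now choose $t_i$ by sublinear growth and then $\delta$ so that $c_0=\mathcal H^{n-1}(\{\rho=t_i\})+\delta\,\mathcal H^n(\{\rho\le t_i\})<\min\{c_{width},c_{height}\}$. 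The comparison $\mathcal A^{h,\delta}_E(\Omega_E)\le\mathcal A^{h,\delta}_E(\{\rho<t_i\})=c_0$ then rules out contact. Your closing remark about monotonicity points in this direction, but it lacks two things: the width step, explaining why a touching component must reach the interior of a fixed collar, and a region where the prescribed curvature is bounded. A smaller point: for $n\ge8$, quasi-regularity does not follow from the codimension of the singular set alone. You need, as in the paper, that each point of $\mathbf\Lambda(\Omega,\Sigma)$ is touched by an exterior geodesic ball and is therefore regular by blow-up and the half-space property.
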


 Let us fix a component $E$ of $M \setminus K$ and consider the manifold 
 $$M_E = K \cup E.$$ It suffices to prove Proposition \ref{Prop: mean concave neighborhood} above for the one-ended manifold $(M_E, g)$ with smooth boundary. If $\dim M \leq 7$, then the construction of $U$ proceeds directly by formulating some minimization problem for a soap bubble with an inner obstacle, where the problem is carefully set up so that the minimizer has no contact with the obstacle after modification. In higher dimensions, we need to apply the mean-convex smoothing theorem first formulated by Gromov and rigorously proved in next section.

\subsection{The soap bubble method with inner obstacle}
 Let $V$ denote the compact manifold with boundary, bounded by the boundary $\partial E$ and a fixed smooth hypersurface, denoted by $\Sigma_{sep}$, that separates $\partial E$ from the infinity of $E$. In order to distinguish different boundary components, we denote
 $$\partial_-V:=\partial E\mbox{ and }\partial_+V:=\Sigma_{sep}.$$
To prevent the soap bubble from having contact with the inner obstacle, we need to introduce two positive constants. Define the {\it width constant} by
\begin{equation*}
c_{width} := \inf \left\{ \mathbb{M}_g(T) \left|
\begin{array}{c}
T \in \mathcal{I}_{n-1}(V)\mbox{ with } \partial T = 0\mbox{ and}\\
T \mbox{ is homologically nontrivial in } V
\end{array} \right. \right\},
\end{equation*}
where $\mathcal{I}_{n-1}(V)$ denotes the space of all integral $(n-1)$-currents supported in $V$. Define the {\it height constant} by
\begin{equation*}
c_{height} := \inf \left\{ \mathbb{M}_g(T) \left|
\begin{array}{c}
T \in \mathcal{I}_{n-1}(V)\mbox{ has mean curvature vector }|\vec H|\leq 1\\\mbox{ in } \mathring V,
\,\spt T \mbox{ is connected and } \spt T\cap \partial_\pm V\neq\emptyset
\end{array} \right. \right\}.
\end{equation*}
Roughly speaking, the width and height constants measure, respectively, the width and the height of $V$ in terms of area.

\begin{lemma}\label{Lem: positivity}
   Both $c_{width}$ and $c_{height}$ are positive.
\end{lemma}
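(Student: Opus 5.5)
Both constants will be handled with the same mechanism: a lower bound on mass coming from a geometric constraint, one global and one local.

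\emph{Width constant.} I would use a calibration argument. Let $d$ be a smooth function on $V$ with $d=0$ on $\partial_-V$, $d=1$ on $\partial_+V$, and $|\nabla d|\le L$. Take any cycle $T\in\mathcal I_{n-1}(V)$ that is homologically nontrivial in $V$. Since $V$ is connected, $H_{n-1}(V)$ paired with the class of $\partial_-V$ is detected by intersection with an arc $\sigma$ joining $\partial_-V$ to $\partial_+V$. The slices $\langle T,d,t\rangle$ give no direct lower bound, so I would instead use the coarea/pairing directly. Choose the closed $(n-1)$-form $\omega=\star\, d(d)$ restricted appropriately. More precisely, nontriviality of $[T]$ means $T$ is not a boundary.

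Here I would split into two cases. In the first, $T$ pairs nontrivially with a de Rham class; then $0<|T(\omega)|\le \|\omega\|_\infty \mathbb M(T)$ for some closed form $\omega$, and since integral homology pairings take values in a discrete lattice, this gives a uniform lower bound. The torsion case is more delicate. To cover it robustly, I would instead invoke the isoperimetric / deformation theorem of Federer--Fleming on the compact Lipschitz neighborhood retract $V$: there is $\varepsilon_0>0$ such that every integral cycle of mass $<\varepsilon_0$ in $V$ bounds an integral current in $V$. For closed manifolds with boundary this is standard: embed $V$ in $\mathbb R^N$, use a Lipschitz retraction from a neighborhood, and apply the deformation theorem at scale below the retraction radius. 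A small cycle is then homologous to a polyhedral cycle of small mass on a fixed cubical grid. That cycle must vanish, since nonzero grid chains have mass at least the cube face size, so $T$ is a boundary. This yields $c_{width}\ge\varepsilon_0>0$.

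\emph{Height constant.} I would use the monotonicity formula for varifolds with bounded mean curvature. Let $\delta=\frac12\dist(\partial_-V,\partial_+V)>0$. If $\spt T$ is connected and meets both $\partial_\pm V$, then it contains points along every level $\{\dist(\cdot,\partial_-V)=s\}$ for $s\in(0,2\delta)$. Pick $N\sim \delta/r$ points $x_i\in\spt T\cap\mathring V$ at mutual distance $\ge 2r$ and at distance $\ge r$ from $\partial V$. Here $r$ is smaller than the injectivity/convexity radius of a compact neighborhood of $V$ in $M$, and also smaller than $\delta$. Monotonicity with $|\vec H|\le1$ in small geodesic balls, where the ambient curvature is bounded, gives $\|T\|(B_r(x_i))\ge c_n r^{n-1}$, using density $\ge1$ at support points of an integral varifold. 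In fact a single ball suffices: $\mathbb M(T)\ge c_n r^{n-1}>0$. This is a uniform constant depending only on $V$, so $c_{height}>0$.

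The main obstacle is technical care. In the width case, the point is to ensure the small-cycle-bounds statement holds in $V$ with boundary, using the retraction. In the height case, one must verify that the mean curvature condition is interpreted as first variation in $\mathring V$. The ball $B_r(x_i)\subset\mathring V$ avoids $\partial T$, which may lie on $\partial V$, so monotonicity applies there.
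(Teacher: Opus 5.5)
Your proof is correct. For $c_{height}$ it is essentially the paper's argument. The paper also combines the monotonicity formula with the fact that a connected $\spt T$ meeting both $\partial_-V$ and $\partial_+V$ must cross a fixed separating hypersurface; your level set $\{\dist(\cdot,\partial_-V)=\delta\}$ plays that role, and, as you note, a single interior ball suffices.

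For $c_{width}$ you take a genuinely different route. The paper perturbs $g$ to a metric $\tilde g$ with $g/2\le\tilde g\le 2g$ for which $\partial V$ is mean-convex, so that $\tilde c_{width}\le 2^{n-1}c_{width}$. It then invokes geometric measure theory, with the mean-convex boundary acting as a barrier, to say that $\tilde c_{width}$ is attained by a nontrivial integral cycle in $V$ and is therefore positive. You instead apply the Federer--Fleming deformation theorem in a Lipschitz neighborhood retract of $V$. This shows that every integral cycle of mass below some $\varepsilon_0$ bounds an integral current in $V$.

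Your route is purely metric and topological. It needs no boundary convexity and no minimization or regularity theory, and it controls the infimum over all nontrivial classes at once, whatever the homology of $V$ is. In fact, the paper's claim that the infimum is attained by a \emph{nontrivial} cycle tacitly requires that a minimizing sequence of nontrivial cycles cannot flat-converge to a null-homologous limit. That is essentially the small-cycle statement you prove. The paper's version, in turn, is shorter given the minimization machinery it uses throughout, and it produces a minimizer realizing the constant.

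The preliminary remarks in your width paragraph play no role in your final argument and can simply be dropped. These are the form $\star\,\mathrm d(d)$, which is not closed in general, and the de Rham/torsion case split.
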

\begin{proof}
 Note that we can take a perturbation $\tilde g$ of $g$ such that
 \begin{itemize}
 \item $\tilde g$ and $g$ are equivalent with $g/2\leq \tilde g\leq 2g$;
 \item and the boundary $\partial V$ is mean-convex with respect to $\tilde g$.
 \end{itemize}
 In particular, the width constant $\tilde c_{width}$ with respect to $\tilde g$ satisfies
 $$\tilde c_{width}\leq 2^{n-1}\cdot c_{width}.$$
 By the geometric measure theory, the width constant $\tilde c_{width}$ is attained by some nontrivial integral $(n-1)$-current in $V$, and is therefore positive, which then yields the positivity of $c_{\mathrm{width}}$. The positivity of $c_{\mathrm{height}}$ comes from the monotonicity formula, and the fact that $\spt T$ must pass through a fixed hypersurface separating $\partial_- V$ and $\partial_+V$.
\end{proof}

We will formulate the minimization problem as follows. Recall that $(M,g)$ is assumed to have sublinear lower volume growth, and then we are able to find a sequence of positive constants $r_i\to+\infty$ such that
$$
\frac{\vol_g(B_{r_i}(p))}{r_i}\to 0\mbox{ as }i\to\infty.
$$
Fix a smooth proper function $$\rho:M_E\to[0,+\infty)$$ satisfying 
\begin{itemize}
\item $\rho^{-1}(0)=K$;
\item $\Lip\rho\leq 2$;
\item and $|\rho(\cdot)-\dist(\cdot,K)|\leq 1$.
\end{itemize}
A straightforward computation shows
$$
r_i^{-1}\vol_g\left(\rho^{-1}\left(\left[0,\frac{r_i}{2}\right]\right)\right)\to 0\mbox{ as }i\to\infty.
$$
Applying the coarea formula, we obtain
$$
r_i^{-1}\int_{\frac{r_i}{4}}^{\frac{r_i}{2}}\mathcal H_g^{n-1}(\{\rho=\tau\})\,\mathrm d\tau=r_i^{-1}\int_{\{r_i/4\leq \rho\leq r_i/2\}}|\mathrm d\rho|_g\,\mathrm d\mathcal H^n_g\to 0\mbox{ as }i\to\infty.
$$
Consequently, for each $i$ we can choose a regular value $t_i$ of $\rho$ satisfying $$t_i\in(r_i/4,r_i/2)$$ such that
$$
\mathcal H^{n-1}_g(\{\rho=t_i\})\to 0\mbox{ as } i\to\infty.
$$
In particular, we may take $t_i$ sufficiently large so that $V\subset\{\rho<t_i\}$ and
$$\mathcal H^{n-1}_g(\{\rho=t_i\})<\min\{c_{width},c_{height}\},$$
which can be guaranteed from the positivity of the right-hand side by Lemma \ref{Lem: positivity}.
We fix such a $t_i$ in the following discussion. 

Note that we can take a small constant $\delta\in (0,1)$ such that
\begin{equation}\label{Eq: prescribed comparison}
  c_0<\min\{c_{width},c_{height}\},
\end{equation}
where
$$c_0:=  \mathcal H^{n-1}_g(\{\rho=t_i\})+\delta\cdot \mathcal H^n_g(\{\rho\leq t_i\}).$$

For our purpose, we need to take a slightly larger regular value $t_i^*>t_i$ of $\rho$ and set $$W:=\rho^{-1}([0,t_i^*]).$$ 
Fix a smooth function $\eta:[0,t_i^*)\to \mathbb R$ satisfying the following properties:
\begin{itemize}
    \item $\eta(t)\leq 0$ for all $t$;
    \item $\eta\equiv 0$ on $[0,t_i]$;
    \item and $\eta(t)\to -\infty$ as $t\to t_i^*$.
\end{itemize}
Define 
$$h:=\eta\circ\rho,$$ 
which will serve as the prescribed mean curvature function below.

Consider the class of Caccioppoli sets
$$
\mathcal C_E=\left\{
\begin{array}{c}
	\mbox{open Caccioppoli sets } \Omega\subset M_E\mbox{ such that }\\ K\subset \Omega\mbox{ and }\Omega\Subset W\setminus \rho^{-1}(t_i^*)
\end{array}\right\}
$$
and the functional
$$\mathcal A^{h,\delta}_E(\Omega)=\mathcal H^{n-1}_g(\partial^*\Omega\cap \mathring W)-\int_{\Omega}(h-\delta)\,\mathrm d\mathcal H^n_g\mbox{ with }\Omega\in \mathcal C_E,$$
where $\partial^*\Omega$ denotes the reduced boundary of $\Omega$ and $\mathring W$ denotes the interior of $W$.

\begin{lemma}\label{Lem: minimizer}
    There exists an $\Omega_E\in \mathcal C_E$ such that
    $$\mathcal A^{h,\delta}_E(\Omega_E)=\min_{\Omega\in \mathcal C_E}\mathcal A^{h,\delta}_E(\Omega).$$
\end{lemma}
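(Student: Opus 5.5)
We use the direct method in the space of Caccioppoli sets. The key point is that $h=\eta\circ\rho\to-\infty$ near $\rho^{-1}(t_i^*)$, so the term $-\int_\Omega h$ acts as a barrier: it keeps minimizing sequences away from the outer boundary of $W$. The steps are as follows.

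\textbf{Step 1: finiteness of the infimum.} Since $\eta\le 0$, we have $-(h-\delta)\ge\delta>0$. Hence
$$\mathcal A^{h,\delta}_E(\Omega)\ge \mathcal H^{n-1}_g(\partial^*\Omega\cap\mathring W)\ge 0$$
for every $\Omega\in\mathcal C_E$. The class is nonempty: take $\Omega_0=\{\rho<t_i\}$, which contains $K$ and is compactly contained in $W\setminus\rho^{-1}(t_i^*)$. Its energy is
$$\mathcal A^{h,\delta}_E(\Omega_0)\le \mathcal H^{n-1}_g(\{\rho=t_i\})+\delta\,\mathcal H^n_g(\{\rho\le t_i\})=c_0<\infty.$$
So the infimum $I$ lies in $[0,c_0]$.

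\textbf{Step 2: compactness.} Take a minimizing sequence $\Omega_k$ with $\mathcal A^{h,\delta}_E(\Omega_k)\le c_0+1$. Then the perimeters in $\mathring W$ are uniformly bounded, and $\int_{\Omega_k}(-h)\le c_0+1$. Since $W$ is compact, BV compactness gives a subsequence with $\chi_{\Omega_k}\to\chi_{\Omega_\infty}$ in $L^1$. Lower semicontinuity of perimeter on the open set $\mathring W$ controls the area term. Fatou's lemma, applied to the nonnegative integrand $-(h-\delta)\chi_{\Omega_k}$, controls the bulk term. Together these give $\mathcal A^{h,\delta}_E(\Omega_\infty)\le I$. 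Also $K\subset\Omega_\infty$ up to a null set, because each $\Omega_k\supset K$.

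\textbf{Step 3: admissibility of the limit (main obstacle).} We must show $\Omega_\infty$ is compactly contained in $W\setminus\rho^{-1}(t_i^*)$ and has an open representative. The bound $\int_{\Omega_\infty}(-h)<\infty$ alone does not exclude mass accumulating at $\rho=t_i^*$. We plan a cut-off comparison. For $s\in(t_i,t_i^*)$, replace $\Omega$ by $\Omega\cap\{\rho<s\}$. The area changes by at most
$$\mathcal H^{n-1}(\Omega^{(1)}\cap\{\rho=s\})-\mathcal H^{n-1}(\partial^*\Omega\cap\{\rho>s\}),$$
while the bulk term decreases by
$$\int_{\Omega\cap\{\rho\ge s\}}\bigl(|\eta(\rho)|+\delta\bigr).$$
Set $f(s)=\vol(\Omega\cap\{\rho>s\})$, so that $-f'(s)$ dominates $\mathcal H^{n-1}(\Omega\cap\{\rho=s\})/2$ by coarea with $\Lip\rho\le2$. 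Minimality then yields a differential inequality of the form
$$|\eta(s)|\,f(s)\le C\,(-f'(s)).$$
Since $|\eta(s)|\to\infty$, this forces $f(s)=0$ for some $s<t_i^*$. If the estimate is awkward, we first choose $\eta$ diverging fast enough (e.g. $\int|\eta|=\infty$ near $t_i^*$), or we truncate at a level $s$ with $|\eta(s)|$ larger than a constant depending on $c_0$.

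\textbf{Step 4: openness.} Minimality against local perturbations shows that $\Omega_\infty$ is a $(\Lambda,r_0)$-almost minimizer of perimeter on compact subsets of $\mathring W$, since $h$ is bounded on $\{\rho\le s\}$. Density estimates then let us choose the open representative given by the set of density-one points. This representative still contains $K$ (interior points of $K$ are handled via $K\subset\Omega_k$, with $\partial K$ treated by the standard convention) and is compactly contained in $W\setminus\rho^{-1}(t_i^*)$. Hence $\Omega_E:=\Omega_\infty\in\mathcal C_E$ attains the minimum.
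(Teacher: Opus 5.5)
Your Steps 1 and 2 are fine. Step 3, however, has a genuine gap, and it is exactly the step where $h\to-\infty$ must be used. The inequality you get from comparing $\Omega$ with $\Omega\cap\{\rho<s\}$, namely $|\eta(s)|\,f(s)\le C\,(-f'(s))$ (for nondecreasing $|\eta|$), is correct. But it is linear in $f$, so it only gives
\[
f(s)\le f(s_0)\exp\Bigl(-C^{-1}\int_{s_0}^{s}|\eta(t)|\,\mathrm dt\Bigr),
\]
and a function satisfying this with equality is positive on all of $[s_0,t_i^*)$. No finite-time extinction follows. Making $\eta$ diverge faster, or requiring $\int|\eta|=\infty$, only speeds up the decay, and $f(s)\to0$ as $s\to t_i^*$ holds anyway. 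So the inequality says nothing about the support. Your second fallback has no mechanism behind it either. Under truncation, the energy changes by the slice area minus the outer area minus the bulk gain, and nothing you wrote bounds the slice area by the bulk gain. Moreover, the threshold that $|\eta|$ must exceed is geometric, not determined by $c_0$.

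The missing ingredient is a divergence-theorem (calibration) bound on the slice. Since $t_i^*$ is a regular value of the proper function $\rho$, there is $\epsilon>0$ with $\nabla\rho\neq0$ on $\{t_i^*-\epsilon\le\rho\le t_i^*\}$, and there $X=\nabla\rho/|\nabla\rho|$ satisfies $|\ddiv X|\le C_1$. Every $\Omega\in\mathcal C_E$ has, by definition, no trace on $\rho^{-1}(t_i^*)$. Integrating $\ddiv X$ over $\Omega\cap\{\rho>s\}$ therefore gives, for a.e. $s\in(t_i^*-\epsilon,t_i^*)$,
\[
\mathcal H^{n-1}(\Omega\cap\{\rho=s\})\le \mathcal H^{n-1}(\partial^*\Omega\cap\{\rho>s\})+C_1\,\mathcal H^n(\Omega\cap\{\rho>s\}).
\]
Choose such an $s$ with $|\eta|\ge C_1$ on $[s,t_i^*)$. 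Then $\mathcal A^{h,\delta}_E(\Omega\cap\{\rho<s\})\le\mathcal A^{h,\delta}_E(\Omega)$ for every $\Omega\in\mathcal C_E$, and the truncated set is still in $\mathcal C_E$. Apply this to the minimizing sequence before passing to the limit. Then all $\Omega_k$, and hence $\Omega_\infty$, lie in the fixed compact set $\{\rho\le s\}$.

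This must be done on the sequence, not on $\Omega_\infty$. For the limit, the divergence theorem picks up a trace term on $\rho^{-1}(t_i^*)$ with the wrong sign, and that trace is precisely what is unknown. This barrier effect is what the standard existence theory for such functionals relies on; the paper simply cites that theory.

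A smaller point in Step 4: the literal condition ``$\Omega$ open and $K\subset\Omega$'' is not closed under $L^1$ convergence, since the boundaries $\partial\Omega_k$ may collapse onto $\partial K$. So the density-one representative need not contain $\partial K$. ``The standard convention'' should be stated explicitly as the obstacle condition $\mathcal H^n(K\setminus\Omega)=0$. The paper must read it this way too, since its next lemma rules out contact of $\partial\widehat\Omega_E$ with $K$ only a posteriori.
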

\begin{proof}
    By the geometric measure theory we can always find a minimizer $\Omega_E$ of $\mathcal A^{h,\delta}_E$ in $\mathcal C_E$ as desired, see \cite[Proposition 2.1]{Zhu21} for instance. 
\end{proof}
Let $\Omega_E^*$ denote the component of $\Omega_E$ containing $K$, and $\widehat\Omega_{E}$ be the union of $\Omega_E^*$ and all bounded components of its complement in $M_E$. Then $\widehat\Omega_E$ is a bounded open set and satisfies $\partial\widehat\Omega_E \subset \partial\Omega_E$.

The following lemma says that $\partial \widehat\Omega_E$ has no contact with $K$.

\begin{lemma}\label{Lem: untouch component}
$K \subset \widehat\Omega_E$.
\end{lemma}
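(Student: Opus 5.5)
We argue by contradiction and use the two constants from Lemma \ref{Lem: positivity} together with the comparison inequality \eqref{Eq: prescribed comparison}. First we bound the energy of the minimizer. Test $\mathcal A^{h,\delta}_E$ against the competitor $\Omega=\{\rho<t_i\}\in\mathcal C_E$. On this set $h\equiv 0$, so
$$\mathcal A^{h,\delta}_E(\Omega_E)\le \mathcal H^{n-1}_g(\{\rho=t_i\})+\delta\,\mathcal H^n_g(\{\rho<t_i\})\le c_0.$$
Since $h\le 0$, the bulk term $-\int_{\Omega_E}(h-\delta)$ is nonnegative. Hence $\mathcal H^{n-1}_g(\partial^*\Omega_E\cap\mathring W)\le c_0<\min\{c_{width},c_{height}\}$. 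Because $\partial\widehat\Omega_E\subset\partial\Omega_E$, the same mass bound holds for the current $T=[\![\partial\widehat\Omega_E]\!]$ and for each of its pieces. Here we should take care to use the essential (measure-theoretic) boundary, so that the reduced boundary carries the full mass.

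Next we compare with $\partial V$. Since $K\subset\Omega_E$ and $\Omega_E^*$ is the component containing $K$, the set $\widehat\Omega_E$ contains a neighborhood of $K$ in $\Omega_E$. If $K\not\subset\widehat\Omega_E$, then, $K$ being closed and $\Omega_E$ open, the only failure mode is that $\partial\widehat\Omega_E$ touches $K$, more precisely touches $\partial_-V=\partial E$. We then split into two cases, according to whether some connected component of $\spt\partial\widehat\Omega_E$ reaches $\partial_+V$.

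In the first case, some component of $\spt T$ meets both $\partial_-V$ and $\partial_+V$. Inside $\mathring V$ we have $\rho<t_i$, so $h=0$ there. The minimizing property then gives that $\partial\Omega_E$ has mean curvature $|\vec H|=\delta\le 1$ in $\mathring V$, by the first variation for interior variations. So the portion of $T$ in $V$, restricted to that component, is admissible for $c_{height}$. Its mass is then at least $c_{height}$, which contradicts the bound above.

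In the second case, no component of $\spt T$ touching $\partial_-V$ reaches $\partial_+V$. Then the part of $\partial\widehat\Omega_E$ lying in $V$ that contains the touching component is a cycle in $V$. Because $\widehat\Omega_E$ is bounded and has no bounded complementary components, this part separates $\partial_-V$ from infinity at least partially. We want to deduce that it is homologically nontrivial in $V$, which would give mass at least $c_{width}$ and again a contradiction. To ensure nontriviality, we would instead replace $\widehat\Omega_E$ by $\widehat\Omega_E\cup V$ or by $\widehat\Omega_E\setminus(\text{region near }K)$, and compare energies. Either the replacement lowers $\mathcal A$, contradicting minimality, or the cut surface is homologous to $\partial_-V$ in $V$.

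The main obstacle is this second case: converting "touching $K$" into a genuinely nontrivial cycle or an energy-decreasing competitor, while handling the constraint $K\subset\Omega$ (which acts as an obstacle, so the first variation fails on the contact set). The first thing to try is the cut-and-paste with $V$. Since $\partial V$ is mean-convex for the perturbed metric $\tilde g$ used in Lemma \ref{Lem: positivity}, and all masses are below both constants, the total boundary inside $V$ is either null-homologous, in which case we fill it to remove the contact, or nontrivial, in which case its mass is at least $c_{width}$.
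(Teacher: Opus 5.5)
Your energy bound and your Case 1 are exactly the paper's argument. The energy bound compares with $\{\rho<t_i\}$ and drops the nonnegative bulk term to get $\mathbb M_g(\partial\Omega_E)\le c_0<\min\{c_{width},c_{height}\}$. In Case 1, $h\equiv 0$ on $V$ and $\mathring V$ avoids the obstacle, so $|\vec H|=\delta<1$ there and the height constant applies.

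The gap is Case 2, which in the paper is the width step showing $\mathcal S\setminus V\neq\emptyset$. You never prove that a component $\mathcal S\subset V$ of $\partial\widehat\Omega_E$ touching $\partial_-V$ is homologically nontrivial in $V$; you acknowledge this yourself. The routes you sketch do not close it:
\begin{itemize}
\item $\widehat\Omega_E\setminus(\text{region near }K)$ is not in $\mathcal C_E$, since competitors must contain $K$.
\item Filling a null-homologous piece ``to remove the contact'' gives no contradiction unless you show the filled set has strictly smaller $\mathcal A^{h,\delta}_E$, which you do not.
\item The $\tilde g$-mean-convexity of $\partial V$ was only used to show $c_{width}>0$ and is irrelevant here.
\end{itemize}

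The missing observation is topological, and it is precisely why one passes from $\Omega_E$ to $\widehat\Omega_E$. Every component $\mathcal S$ of $\partial\widehat\Omega_E$ is a common boundary component of the connected set $\Omega_E^*\supset K$ and of an \emph{unbounded} component $U$ of $M_E\setminus\overline{\Omega_E^*}$.

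Suppose $\mathcal S\subset V$ and $[\![\mathcal S]\!]=\partial Q$ with $Q\in\mathcal I_n(V)$. By the constancy theorem, the integer density of $Q$ is constant on each component of $M_E\setminus\mathcal S$.
\begin{itemize}
\item The connected open set $U$ avoids $\mathcal S$ and, being unbounded, meets $M_E\setminus V$. Hence $Q$ vanishes on $U$.
\item Likewise $\Omega_E^*$ avoids $\mathcal S$ and contains $\mathring K$, which is disjoint from $V\subset\overline E$. Hence $Q$ vanishes on $\Omega_E^*$.
\end{itemize}
But across $\mathcal H^{n-1}$-almost every point of $\mathcal S$ one passes from $\Omega_E^*$ to $U$, and the density of $Q$ must jump by $\pm1$. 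This is a contradiction.

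Hence $\mathcal S$ is nontrivial in $V$, so $\mathbb M_g(\mathcal S)\ge c_{width}>c_0$, which excludes your Case 2 altogether. Thus $\mathcal S$ leaves $V$. Since $\mathcal S$ is connected, some component of $\mathcal S\cap V$ meets both $\partial_-V$ and $\partial_+V$, and your Case 1 argument applies to it.
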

\begin{proof}
   Otherwise, $K$ must intersect some component of $\partial\widehat\Omega_E$, hence of $\partial\Omega_E$, denoted by $\mathcal S$. Note that $\mathcal S$ must be a common component of $\partial\Omega_E^*$ and $\partial U$ for some unbounded component $U$ of $M_E\setminus \bar \Omega_E^*$. We know that $\mathcal S$ is homologically non-trivial in $M_E\setminus K$. Using \eqref{Eq: prescribed comparison} together with a direct comparison of the functional $\mathcal A^{h,\delta}_E$ evaluated at $\{\rho \leq t_i\}$ and at $\Omega_E$, we obtain
    $$\mathbb M_g(\mathcal S)\leq \mathbb M_g(\partial\Omega_E)\leq c_0<c_{width}.$$
   This implies $$\mathcal S \setminus V \neq \emptyset.$$ A similar comparison argument using the height constant shows $$\mathbb M_g(\mathcal S) < c_{\text{height}},$$ which in turn implies $$\mathcal S \cap K = \emptyset,$$ contradicting to our assumption.
\end{proof}

In particular, we have the following mean curvature estimate.
\begin{corollary}
The regular part of $\partial \widehat\Omega_E\setminus K$ satisfies
$$H\leq -\delta$$
with respect to the outward-pointing unit normal.
\end{corollary}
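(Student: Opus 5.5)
We obtain the estimate from the first variation of $\mathcal A^{h,\delta}_E$ at the minimizer $\Omega_E$ of Lemma \ref{Lem: minimizer}, restricted to variations supported near regular points of $\partial\widehat\Omega_E\setminus K$.

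\textbf{Step 1 (the relevant boundary points are admissible).} Let $x$ be a regular point of $\partial\widehat\Omega_E\setminus K$. Since $\partial\widehat\Omega_E\subset\partial\Omega_E$, the point $x$ is also a regular point of $\partial\Omega_E$. Two constraints could block free variations at $x$.
\begin{itemize}
\item The inner obstacle $K\subset\Omega$. Lemma \ref{Lem: untouch component} gives $K\subset\widehat\Omega_E$, so $K$ has positive distance from $\partial\widehat\Omega_E$. Hence a small neighborhood $B$ of $x$ avoids $K$.
\item The outer barrier $\rho^{-1}(t_i^*)$. The membership $\Omega_E\Subset W\setminus\rho^{-1}(t_i^*)$ in $\mathcal C_E$ means $\bar\Omega_E$ stays a positive distance from $\rho^{-1}(t_i^*)$. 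Shrinking $B$, we may assume $\bar B\cap\rho^{-1}(t_i^*)=\emptyset$.
\end{itemize}
Consequently, for every vector field $X$ compactly supported in $B$, the flow $\phi_s$ satisfies $\phi_s(\Omega_E)\in\mathcal C_E$ for all small $|s|$.

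\textbf{Step 2 (first variation).} Let $\nu$ be the outward unit normal and set $f=\langle X,\nu\rangle$. With the convention $H=\ddiv_\Sigma\nu$, the first variation of the functional is
$$\frac{d}{ds}\Big|_{s=0}\mathcal A^{h,\delta}_E(\phi_s(\Omega_E))=\int_{\partial\Omega_E}\big(H-(h-\delta)\big)f\,\mathrm d\mathcal H^{n-1}_g.$$
Minimality makes this vanish for all such $f$, so $H=h-\delta$ on the regular part near $x$. Since $\eta\le0$, we have $h\le0$, and therefore $H\le-\delta$.

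\textbf{Step 3 (orientation).} Locally near $x$, the sets $\widehat\Omega_E$ and $\Omega_E$ lie on the same side of $\partial\widehat\Omega_E$. This holds because $\partial\widehat\Omega_E$ consists of common boundary components of $\Omega_E^*$ and unbounded components of the complement. Hence the outward normal of $\widehat\Omega_E$ agrees with that of $\Omega_E$, which gives the claim.

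\textbf{Main obstacle.} The only delicate point is Step 1, namely that neither constraint is active, so that we get an equation rather than a one-sided variational inequality. The obstacle $K$ is handled by Lemma \ref{Lem: untouch component}. The outer constraint is handled by the definition of $\mathcal C_E$; the blow-up $\eta\to-\infty$ is what keeps a minimizer there in the first place.
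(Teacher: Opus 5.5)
Your proposal is correct and is essentially the argument the paper leaves implicit: near a regular point $x\in\partial\widehat\Omega_E\setminus K$ neither $K$ nor $\rho^{-1}(t_i^*)$ constrains the minimizer, so the first variation of $\mathcal A^{h,\delta}_E$ gives $H=h-\delta\le-\delta$ because $\eta\le 0$. One small point: to find a ball around $x\notin K$ that avoids $K$ you only need $K$ closed, not Lemma~\ref{Lem: untouch component}; that lemma's role is to make the estimate cover all of $\partial\widehat\Omega_E$ away from $\partial M_E$, which is what the later construction uses.
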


\subsection{Smooth mean-concave region}
To handle singularity issues in higher dimensions, we use the following mean-convex smoothing theorem.

\begin{proposition}\label{Prop: mean-convex-smoothing}
Suppose that $\Sigma$ is compact and 
quasi-regular relative to $\Omega$ and that the mean curvature of $\mathbf\Lambda(\Omega, \Sigma)$ with respect to the outward-pointing unit normal satisfies $H \geq \delta$ for some constant $\delta > 0$.
Then there exists a sequence of open subsets $\Omega_l\subset \Omega$ such that
\begin{itemize}
\item we have 
$$\partial\Omega_l\cap \partial\Omega=\partial\Omega\setminus \Sigma,$$
and $\Sigma_l:= \partial \Omega_l\setminus \partial\Omega$ is a smooth hypersurface whose mean curvature with respect to the outward-pointing unit normal satisfies $$H\geq \delta-l^{-1};$$
\item $\Omega\setminus\Omega_l$ lies in $l^{-1}$-neighborhood of $\Sigma$, and in particular, $\Sigma_l \to \Sigma$ in the sense of Hausdorff distance as $l \to \infty$;
\item we also have $\mathcal H^{n-1}(\Sigma_l)\to \mathcal H^{n-1}(\mathbf \Lambda(\Omega,\Sigma))$ as $l\to \infty$.
\end{itemize}
\end{proposition}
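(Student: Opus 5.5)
The plan follows the outline in the introduction: replace $\Omega$ near $\Sigma$ by a set with positive reach, then run a constrained prescribed-mean-curvature minimization inside it, and use viscosity barriers to show that the minimizer is smooth and mean-convex. Throughout, $\Sigma$ is the compact, quasi-regular portion of $\partial\Omega$, and $\mathbf\Lambda(\Omega,\Sigma)$ denotes its regular part. The hypothesis $H\geq\delta$ is understood weakly, in Ilmanen's viscosity sense, across the singular set.

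\textbf{Step 1: positive reach approximation.} Pick a cutoff neighborhood $N$ of $\Sigma$ that stays away from $\partial\Omega\setminus\Sigma$. For small $s>0$, set
$$\Omega^s:=\{x\in\Omega:\dist(x,\Sigma)>s\}$$
near $\Sigma$, and $\Omega^s=\Omega$ away from $N$. By the Sard-type theorems for distance functions, for almost every small $s$ the level set $\{\dist(\cdot,\Sigma)=s\}$ is a Lipschitz hypersurface, and the complement $\overline{M\setminus\Omega^s}$ has positive reach in Federer's sense near $\Sigma$, since it is an outer parallel set. Quasi-regularity and the viscosity inequality $H\geq\delta$ should transfer to $\partial\Omega^s$ in the viscosity sense, with $H\geq\delta-o(1)$. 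This is a comparison argument: any smooth touching hypersurface from inside, translated by $s$ along normals, touches $\Sigma$. The same argument gives $\mathcal H^{n-1}(\partial\Omega^s\cap N)\to\mathcal H^{n-1}(\mathbf\Lambda(\Omega,\Sigma))$. Quasi-regularity is used precisely here, to rule out mass concentrating on the singular part.

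\textbf{Step 2: constrained minimization.} Minimize
$$\mathcal H^{n-1}(\partial^*F)-\int_F(\delta-l^{-1})$$
among Caccioppoli sets $F\subset\Omega^s$ with $F\supset\Omega^{2s}$, and with $F$ agreeing with $\Omega$ away from $N$. A minimizer exists by compactness, as in Lemma \ref{Lem: minimizer}.

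\textbf{Step 3: the obstacle is inactive.} This is the main obstacle. Ilmanen's maximum principle for viscosity barriers shows that the minimizer does not touch the outer constraint $\partial\Omega^s$: that constraint is a viscosity barrier with $H\geq\delta-o(1)>\delta-l^{-1}$, while the free boundary has mean curvature exactly $\delta-l^{-1}$. Touching the inner constraint $\partial\Omega^{2s}$ is excluded by a mass/height argument similar to Lemma \ref{Lem: untouch component}, choosing $s$ small relative to $l^{-1}$. Once both obstacles are inactive, the boundary $\Sigma_l$ is a genuine prescribed-mean-curvature boundary. It is smooth in dimensions up to $7$; in general it is smooth away from a codimension-$8$ set.

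To remove that singular set, I would exploit the strict inequality and positive reach. Near each point, the positive-reach set gives a uniform one-sided ball condition, so the tangent cones of the minimizer are contained in half-spaces. Hence the density is close to $1$, and Allard regularity gives smoothness everywhere. I expect this to be the hardest step; I would first try to run the blow-up analysis against the positive-reach barrier.

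\textbf{Step 4: conclusion.} The Hausdorff and mass conclusions of the proposition then follow, because $\Sigma_l$ is trapped between $\partial\Omega^{2s}$ and $\partial\Omega^s$. For the mass, $\Omega^s$ itself is a competitor, so
$$\mathcal H^{n-1}(\Sigma_l)\le\mathcal H^{n-1}(\partial\Omega^s)+O(s).$$
Lower semicontinuity of mass under the convergence $\Sigma_l\to\Sigma$ gives the reverse inequality.
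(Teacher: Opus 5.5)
Steps 2--3 fail as set up. With the constant weight $\delta-l^{-1}$, the inner constraint does not stay inactive: it \emph{is} the minimizer.

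By the same reasoning you use for the outer wall, every leaf $\partial\Omega^t$ with $s\le t\le 2s$ has $H\ge\delta-o(1)>\delta-l^{-1}$ in the viscosity sense. Suppose $\partial F$ entered $\{d_\Sigma<2s\}$, and take a point $p\in\partial F$ closest to $\Sigma$, with $d_\Sigma(p)=t$. Then $F$ lies in $\overline{\Omega^t}$ and touches $\partial\Omega^t$ from inside at $p$. This forces $H_{\partial F}(p)\ge\delta-o(1)>\delta-l^{-1}$, which is exactly the contradiction you use to keep $F$ off the outer wall. Hence $F=\Omega^{2s}$, and you have only reproduced a level set of $d_\Sigma$. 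For $\Omega=B_1\subset\mathbb R^n$ this is explicit: writing $\mathcal A$ for your functional and calibrating with $x/|x|$ gives $\mathcal A(F)\ge\mathcal A(B_{1-2s})+(n-1-\delta+l^{-1})\,|F\setminus B_{1-2s}|$. So no argument in the style of Lemma \ref{Lem: untouch component} can keep $F$ off the inner wall.

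The paper instead uses a variable weight: $h_\sigma\equiv\delta-l^{-1}$ near the outer wall and $h_\sigma\equiv C+\delta$ near the inner wall, where $C$ bounds the inner wall's mean curvature from above. This is harmless, since the conclusion only asks for $H\ge\delta-l^{-1}$. But it needs an inner wall with an \emph{upper} curvature bound, and $\partial F_{2s}$ has none.

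Note also that the set with positive reach is $F_s=\{d_\Sigma\ge s\}$, essentially the closure of your $\Omega^s$, not the outer parallel set you name. This comes from semi-concavity of $d_\Sigma$, Rifford's Sard theorem applied to $\dist(\cdot,\mathbf\Lambda(\Omega,\Sigma))$ (this is where quasi-regularity enters), and Bangert's criterion. Positive reach gives only exterior balls, so $\partial F_s$ may have convex creases where $H=+\infty$ in the viscosity sense.

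That is why the paper regularizes a second time. It passes to the outer parallel sets $\Omega_{i,\tau}$ of $F_{s_i}$, whose boundaries are $C^{1,1}$ with two-sided curvature bounds for $\tau$ away from $0$ and $\tau_*$ (Lemma \ref{Lem: C11 hypersurface}). A separate argument, using the normal-cone case analysis and the regular-value property, shows these boundaries remain $(\delta-l^{-1})$-barriers (Lemma \ref{Lem: barrier Omega i,tau}).

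Your regularity step is also not viable. A one-sided ball condition on an obstacle constrains tangent cones only at contact points. At free points, which is where singularities would sit once the obstacles are inactive, it says nothing. Moreover, Allard's theorem needs density close to $1$ at a definite scale, not merely for tangent cones.

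The paper obtains this from thinness. The minimizer lies in a band $V_\sigma$ of width $\sigma$ between the parallel $C^{1,1}$ hypersurfaces $\Sigma_{i,\tau-\sigma}$ and $\Sigma_{i,\tau}$. In Fermi coordinates one compares it with a $g_\tau$-disk of fixed radius $r_1$ plus a side wall of height $\sigma$. This gives $\Theta(x,r)\le 1+\theta_0/2+o(1)$ uniformly as $\sigma\to0$.

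The argument requires the band width to be much smaller than the scale on which the walls are nearly flat. In your band between $\partial\Omega^{2s}$ and $\partial\Omega^s$, both scales are comparable to $s$ near $\Sigma\setminus\mathbf\Lambda(\Omega,\Sigma)$. Separating these two scales is exactly the role of the intermediate $C^{1,1}$ layer.
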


Due to its independent interest, the rigorous proof of Proposition
\ref{Prop: mean-convex-smoothing} will be presented in next section, where we also include the definition of $\mathbf \Lambda(\Omega,\Sigma)$ and of the quasi-regularity therein. In the remaining part of this section, we will complete the proof of Proposition \ref{Prop: mean concave neighborhood} by assuming Proposition
\ref{Prop: mean-convex-smoothing}.

\begin{proof}[Proof of Proposition \ref{Prop: mean concave neighborhood}]
We first deal with the one-ended manifold $(M_E,g)$ and show that there is a smooth bounded open set $U$ containing $K$ such that $\partial U\setminus K$ is mean-concave.

Let $\Omega$ denote the exterior region of $\widehat \Omega_E$ and 
$$\Sigma=\partial \Omega.$$
Clearly, $\Sigma$ is compact.
We will show that $\mathbf \Lambda(\Omega,\Sigma)$ is contained in the regular part of $\partial \widehat\Omega_E$, so $\Sigma$ is quasi-regular relative to $\Omega$, and the hypersurface $\mathbf \Lambda(\Omega,\Sigma)$ satisfies $H\geq \delta$ with respect to the outward-pointing unit normal. To see this, we take any point $x\in \mathbf \Lambda(\Omega,\Sigma)$. By definition, we have $x\in\mathbf \Lambda_\rho(\Omega,\Sigma)$, which means that there exists a unit-speed minimizing geodesic 
$$\gamma: [0, \rho] \to M$$ such that 
\[
\gamma(\rho)=x,\, d(\gamma(0),\Sigma)=\rho\mbox{ and }\gamma([0,\rho))\subset\Omega.
\]
Clearly, the geodesic sphere $\partial B_{\rho/2}(\gamma(\rho/2))$ is smooth and touches $\mathbf \Lambda(\Omega,\Sigma)$ at $x$, which implies that $x$ is a regular point by a blow-up argument and the half-space property of area-minimizing hypersurfaces.

By Proposition \ref{Prop: mean-convex-smoothing}, we can find an open subregion $\Omega_l\subset \Omega$ such that
\begin{itemize}
\item $\partial\Omega_l$ satisfies $H\geq \delta/2>0$ with respect to the outward-pointing unit normal;
\item $\Omega\setminus \Omega_l$ lies in the $1$-neighborhood of $\partial\Omega$.
\end{itemize}
Take 
$$U=M\setminus \overline \Omega_l.$$
 Then $U$ is a smooth bounded region containing $K$ such that the boundary portion $\partial U\setminus K$ is mean-concave with respect to the outward-pointing unit normal.

For the general case, recall that $M\setminus K$ consists of unbounded components $E_1,E_2,\ldots,E_k$. From the previous discussion, we can find smooth bounded open subsets $U_j\subset K\cup E_j$ such that $\partial U_j\setminus K$ is mean-concave. Take
$$U=\bigcup_{j=1}^kU_j\subset M.$$
Then $U$ is a smooth bounded region containing $K$ with mean-concave boundary.
\end{proof}

Now Theorem \ref{Thm: mean concave exhaustion} follows immediately.
\begin{proof}[Proof of Theorem \ref{Thm: mean concave exhaustion}]
Take a smooth compact exhaustion $\{K_l\}_{l\geq 1}$ such that $M\setminus K_l$ consists of unbounded components. By Proposition \ref{Prop: mean concave neighborhood}, we can find smooth bounded regions $U_l$ containing $K_l$ such that $\partial U_l$ is mean-concave with respect to the outward-pointing unit normal. Clearly, $\{U_l\}_{l\geq 1}$ is still an exhaustion of $M$. Up to a subsequence, we can guarantee that $\{U_l\}_{l\geq 1}$ is increasing.
\end{proof}

\section{Mean-convex smoothing}\label{Sec: smoothing}
In this section, we will establish Proposition \ref{Prop: mean-convex-smoothing}. As an application, we prove a local mean-convex approximation result for stationary boundaries, thereby giving a solution to a problem of Lawson.

Let $\Omega \subset M$ be an open subset and $\Sigma$ be a union of some of the connected components of $\partial \Omega$. For each constant $\rho > 0$, we define 
$$\mathbf\Lambda_\rho(\Omega, \Sigma)$$
 to be the set of points $x \in \Sigma$, for which there exists a unit-speed minimizing geodesic $\gamma: [0, \rho] \to M$ such that 
\[
\gamma(\rho)=x,\, d(\gamma(0),\Sigma)=\rho\mbox{ and }\gamma([0,\rho))\subset\Omega.
\]
Denote 
\begin{equation}\label{Eq: projection set}
\mathbf\Lambda(\Omega,\Sigma):=\bigcup_{\rho>0}\mathbf\Lambda_\rho(\Omega,\Sigma).
\end{equation}

First let us recall the following notion of  %$C^k$-quasi-regular 
 quasi-regular boundary.
\begin{definition}
   The boundary portion $\Sigma$ is called  %$C^k$-quasi-regular
   {\it quasi-regular}
   relative to $\Omega$ if $\mathbf\Lambda(\Omega,\Sigma)$ is a
   smooth hypersurface.  
\end{definition}

%\begin{proposition}\label{Prop: mean-convex-smoothing}
%Suppose that $\Sigma$ is compact and %$C^k$-quasi-regular 
%quasi-regular relative to $\Omega$ and that the mean curvature of $\mathbf\Lambda(\Omega, \Sigma)$ with respect to the outward-pointing unit normal satisfies $H \geq \delta$ for some constant $\delta > 0$. %Then for every $\delta\in (0, \delta_0)$, there exists an open subset $\Omega_\delta\subset \Omega$ such that
%\begin{itemize}
%\item $\Sigma_\delta:= \partial \Omega_\delta\cap \Omega$ is a smooth hypersurface whose mean curvature with respect to the outward-pointing unit normal satisfies $H\geq \delta$;
%\item $\Sigma_\delta \to \Sigma$ in the sense of Hausdorff distance as $\delta \to \delta_0$.
%\end{itemize}
%Then there exist a sequence of open subsets $\Omega_i\subset \Omega$ such that
%\begin{itemize}
%\item $\Sigma_i:= \partial \Omega_i\cap \Omega$ is a smooth hypersurface whose mean curvature with respect to the outward-pointing unit normal satisfies $H\geq \delta-i^{-1}$;
%\item $\Sigma_i \to \Sigma$ in the sense of Hausdorff distance as $i \to \infty$.
%\end{itemize}

%\end{proposition}

Our proof is based on the theory of the sets with positive reach as well as the theory of the soap bubbles. The idea for mean-convex smoothing is illustrated in the Figure \ref{Fig: smoothing} below.

\begin{figure}[htbp]
\centering
\includegraphics[width=10cm]{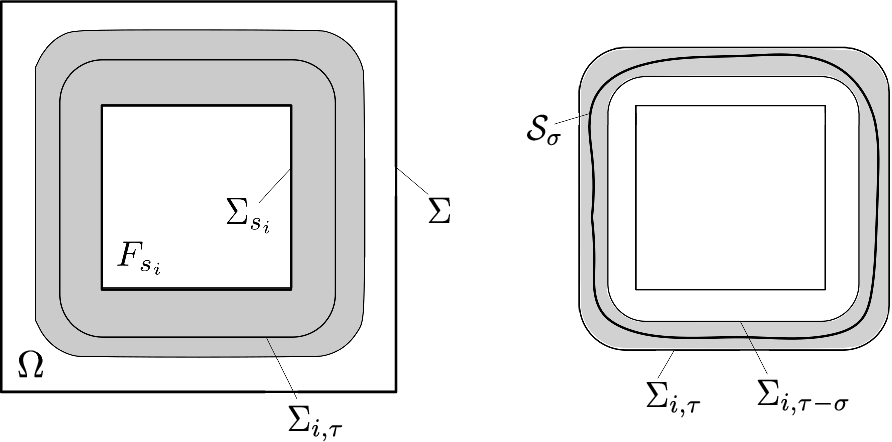}
\caption{There are two steps. In the first step, we deform $\Sigma$ inward to some equidistant hypersurface $\Sigma_{s_i}$ to it, and correspondingly the region $\Omega$ is deformed to $F_{s_i}$. The benefit is that $F_{s_i}$ is a set with positive reach for almost every $s_i$, and so the equidistant hypersurfaces $\Sigma_{i,\tau}$ are $C^{1,1}$ and still mean-convex. In the second step, we construct smooth mean-convex hypersurface by solving certain soap bubble $\mathcal S_\sigma$ in a thin layer between $\Sigma_{i,\tau}$ and $\Sigma_{i,\tau-\sigma}$. Here, the prescribed mean curvature function can be uniformly bounded, and then $\mathcal S_\sigma$ has almost density one everywhere. The Allard regularity and the elliptic estimates promise the desired smoothness.}
\label{Fig: smoothing}
\end{figure}

\subsection{Basics on sets with positive reach}
In the following, we recall some basic properties of sets with positive reach introduced by Federer \cite{Fed59}. 

Let $E$ always denote a closed subset of $(M,g)$ and define
$$d_E(p):=\dist(p,E).$$
Given a point $p \in M$, we use $\mathfrak C_p$ to denote the set consisting of nearest points in $E$ to $p$, where a point $q \in E$ is called a nearest point in $E$ to $p$ if $$d_E(p)=\dist(p,q).$$ 
Note that we have $$\mathfrak C_p\neq \emptyset$$ for all $p\in M$ since $E$ is closed. For convenience, let us denote
$$\mathcal U_E:=\{p\in M:\#\mathfrak C_p=1\}.$$
\begin{definition}
    The reach of $E\subset M$ at $q\in E$ is defined by
    $$\mathfrak r(E,q):=\sup\{r\geq 0:B_r(q)\subset\mathcal U_E\}.$$
    The reach of $E$ is defined by 
    $$\mathfrak r(E):=\inf_{q\in E}\mathfrak r(E,q).$$
    For convenience, $E$ is called a set with positive reach if $\mathfrak r(E)>0$.
\end{definition}
\begin{remark}\label{Rmk: boundary determine reach}
It is easy to check
$$\mathfrak r(E)=\inf_{q\in \partial E}\mathfrak r(E,q)$$
since for any $q\in E$ we have $$B_r(q)\setminus E\subset \bigcup_{q'\in \partial E\cap B_r(q)}B_r(q').$$
\end{remark}
From now on, we will always assume $E$ to be a closed set with positive reach.
The following lemma says that sets with positive reach admit a nice projection map in its neighborhood.
\begin{lemma}\label{Lem: continuous projection}
    Denote
    $$E_r=\{p\in M:d_E(p)\leq r\}.$$
    Then for any $r\in [0,\mathfrak r(E))$ the projection map
    $$\Pi_E:E_r\to E,\,p\mapsto q\in \mathfrak C_p$$
    is a continuous map.
\end{lemma}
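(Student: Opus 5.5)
The plan is a compactness argument combined with uniqueness of nearest points. Fix $r\in[0,\mathfrak r(E))$. First I check that the projection is well defined on $E_r$, that is, $E_r\subset\mathcal U_E$. Take $p\in E_r$ and any nearest point $q\in\mathfrak C_p$. Then $q\in E$ and $\dist(p,q)=d_E(p)\le r<\mathfrak r(E)\le\mathfrak r(E,q)$. Choosing $r'$ with $\dist(p,q)<r'<\mathfrak r(E,q)$ and $B_{r'}(q)\subset\mathcal U_E$ (possible by the definition of the supremum), we get $p\in\mathcal U_E$. Hence $\mathfrak C_p$ is a single point and $\Pi_E$ is a well-defined map $E_r\to E$.

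Next comes continuity via subsequences. Let $p_k\to p$ in $E_r$ and set $q_k=\Pi_E(p_k)$. Since $\dist(p_k,q_k)=d_E(p_k)\le r$ and the $p_k$ stay in a bounded set, the $q_k$ lie in a bounded subset of $M$. By completeness of $(M,g)$ and Hopf--Rinow, closed bounded sets are compact. It therefore suffices to show that every subsequential limit $q$ of $(q_k)$ equals $\Pi_E(p)$; then the whole sequence converges to $\Pi_E(p)$.

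For such a limit, $q\in E$ because $E$ is closed. Moreover,
$$\dist(p,q)=\lim_k\dist(p_k,q_k)=\lim_k d_E(p_k)=d_E(p),$$
using that $d_E$ is $1$-Lipschitz, hence continuous. Thus $q\in\mathfrak C_p=\{\Pi_E(p)\}$, which gives continuity.

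The only delicate point is the first step: the reach is defined through balls around points of $E$, and we need it to control arbitrary points of $E_r$. This is handled by centering the ball at the nearest point $q$ rather than at $p$, using only the strict inequality $r<\mathfrak r(E)$. Remark~\ref{Rmk: boundary determine reach} is not needed. Local compactness of $M$ suffices for the extraction step, so there is no real obstacle.
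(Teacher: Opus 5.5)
Your proof is correct and follows essentially the same route as the paper: you bound the images $\Pi_E(p_k)$, extract convergent subsequences, and use continuity of the distance to identify every limit as the unique nearest point to $p$. Your explicit check that $E_r\subset\mathcal U_E$, by centering the ball at the nearest point $q$, fills in what the paper only asserts; the one inaccuracy is your closing remark, since the extraction step needs properness of the complete metric (closed bounded sets are compact), which mere local compactness does not provide.
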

\begin{proof}
    The map is well-defined since the set $\mathfrak{C}_p$ contains exactly one element for every $p \in E_r$. To establish the continuity of the projection map $\Pi_E$, it suffices to show that for any sequence $p_i \to p$ as $i \to \infty$, every subsequence of ${\Pi_E(p_i)}$ admits a further subsequence converging to $\Pi_E(p)$. Observe that the set $\{p_i\}_{i=1}^\infty$ is bounded, and hence the image set $\{\Pi_E(p_i)\}_{i=1}^\infty$ is bounded as well. Consequently, every subsequence of ${\Pi_E(p_i)}$ contains a convergent subsequence, whose limit is denoted by some $q \in E$. From the continuity of the distance function, we have $\dist(p, q) = d_E(p)$, which implies $q \in \mathfrak{C}_p$, and therefore $q = \Pi_E(p)$.
\end{proof}

The next several lemmas are devoted to show that sets with positive reach have nice tubular neighborhood. To proceed, we need the notion of tangent and normal cones of a set $E$ along its boundary.
\begin{definition}
    Let $q\in \partial E$. The {\it tangent cone} of $E$ at $q$ is defined by
    $$\mathcal T_qE:=\left\{u\in T_qM:u=\lim_{i\to\infty}u_i\mbox{ with }u_i\in t_i^{-1}\exp_q^{-1}(E)\mbox{ and }t_i\to 0\right\},$$
    and the {\it normal cone} of $E$ at $q$ is defined by
    $$\mathcal N_qE:=\{v\in T_qM:g(u, v)\leq 0\mbox{ for all }u\in \mathcal T_qE\}.$$
\end{definition}
\begin{remark}\label{Rmk: normal cone convex}
It is clear that $\mathcal N_qE$ is a convex cone.
\end{remark}
The following discussion will be based on the extra assumption
\begin{equation}
\mathfrak i(\partial E)>0,
\end{equation}
where
$$\mathfrak i(\partial E)=\inf_{q\in \partial E}\inj(M,g,q)$$
and $\mathfrak i(M,g,q)$ denotes the injective radius of $(M,g)$ at $q$.
 We denote
$$r_*:=\min\{\mathfrak r(E),\mathfrak i(\partial E)\}$$
and
$$\mathfrak A:=\{p\in M: d_E(p)\in(0,r_*)\}.$$

\begin{lemma}\label{Lem: unique pair}
    Given any point $p\in \mathfrak A$, there is a unique pair $(q,v)$ such that
    \begin{itemize}
        \item $q=\Pi_E(p)\in \partial E$,
        \item $v\in T_qM$ with $|v|=1$,
        \item and $p=\exp_q(d_E(p) v)$.
    \end{itemize}
    Moreover, we have $v\in \mathcal N_qE$.
\end{lemma}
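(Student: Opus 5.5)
We take $p\in\mathfrak A$ and set $r:=d_E(p)\in(0,r_*)$. Since $r<\mathfrak r(E)$, the point $p$ lies in $E_r$, and Lemma \ref{Lem: continuous projection} shows that $\mathfrak C_p$ has exactly one element. We put $q:=\Pi_E(p)$.

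\textbf{Step 1: $q\in\partial E$.} Since $r>0$, we have $p\notin E$. The minimizing geodesic from $q$ to $p$ starts at $q\in E$ and immediately leaves $E$, because every other point on it is strictly closer to $p$ than $r$. Hence $q\in\partial E$.

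\textbf{Step 2: existence and uniqueness of $v$.} By completeness, a unit-speed minimizing geodesic $\gamma:[0,r]\to M$ from $q$ to $p$ exists. Setting $v:=\gamma'(0)$ gives $|v|=1$ and $p=\exp_q(rv)$. For uniqueness, suppose $(q',v')$ is another pair with the listed properties. The first condition forces $q'=\Pi_E(p)=q$. Then $\exp_q(rv)=\exp_q(rv')$ with $r<\mathfrak i(\partial E)\le\inj(M,g,q)$, and since $\exp_q$ is injective on the open ball of radius $\inj(M,g,q)$ in $T_qM$, we get $v=v'$.

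\textbf{Step 3: $v\in\mathcal N_qE$.} Take $u\in\mathcal T_qE$, written as $u=\lim_i u_i$ with $u_i\in t_i^{-1}\exp_q^{-1}(E)$ and $t_i\to0$. For large $i$, $|t_iu_i|$ is small, so we may take $q_i:=\exp_q(t_iu_i)\in E$ with $t_iu_i$ inside the injectivity ball. Minimality of $q$ gives
\[
\dist(p,q_i)\ge \dist(p,q)=r .
\]
The first variation of $x\mapsto\dist(p,x)$ at $q$ is smooth, since $q$ and $p$ are joined by a minimizing geodesic of length below the injectivity radius at $q$, so $p$ is not conjugate to or in the cut locus of $q$. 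Its gradient at $q$ is $-v$. Therefore
\[
\dist(p,q_i)=r-t_i\,g(v,u_i)+o(t_i).
\]
Combining this with the inequality above, dividing by $t_i$ and letting $i\to\infty$ yields $g(u,v)\le0$. Since $u\in\mathcal T_qE$ was arbitrary, $v\in\mathcal N_qE$.

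The main obstacle is justifying smoothness of $\dist(p,\cdot)$ near $q$. This follows from the symmetric relation "$p$ is not in the cut locus of $q$", which is equivalent to "$q$ is not in the cut locus of $p$", together with $r<\inj(M,g,q)$. If that route is awkward, we can instead use the lower bound $\dist(p,q_i)\le L(\text{broken path})$ via the first variation of length along $\gamma$ with the endpoint moved to $q_i$. This one-sided inequality is exactly the direction needed, and it requires only smoothness of the variation, not of the distance function.
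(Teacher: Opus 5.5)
Your proposal is correct and follows essentially the same route as the paper. You use $d_E(p)<\mathfrak r(E)$ for a unique nearest point, the fact that the minimizing geodesic from $q$ leaves $E$ immediately to get $q\in\partial E$, and the first-order expansion $\dist(\exp_q(w),p)=d_E(p)-g(v,w)+O(|w|^2)$ (valid since $d_E(p)<\inj(M,g,q)$) tested against $q_i=\exp_q(t_iu_i)\in E$ to get $g(u,v)\le 0$. Your appeal to injectivity of $\exp_q$ on the ball of radius $\inj(M,g,q)$ only makes explicit the uniqueness of $v$ that the paper takes for granted. In your fallback remark, the broken-path estimate is an upper bound on $\dist(p,q_i)$ rather than a lower bound, which is indeed the direction you need.
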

\begin{proof}
    The unique pair $(q, v)$ is defined by taking the unique point $$q=\Pi_E(p) \in \mathfrak{C}_p$$ and letting $v$ be the initial velocity $\gamma'(0)$ of the unique unit-speed minimizing geodesic $$\gamma: [0, d_E(p)] \to (M, g)$$ connecting $q$ to $p$. Clearly, we have $$p=\gamma(d_E(p))=\exp_q(d_E(p)v).$$ 
    Note that we have $\gamma(t)\notin E$ for $t>0$, and so $q \in \partial E$.
    
    To see $v \in \mathcal{N}_q E$, we use the fact $v = -\nabla \operatorname{dist}(\cdot, p)(q)$ together with the Taylor expansion
   $$\dist(\exp_q(w),p)=d_E(p)-g(v,w)+O(|w|^2)\mbox{ as }w\to 0.$$
   Suppose by contradiction that $g(v, u) > 0$ for some $u \in \mathcal{T}_q E$. Then we can find a sequence $u_i \in t_i^{-1} \exp_q^{-1}(E)$ with $t_i \to 0$, $u_i\to u$, and $g(v, u_i) \geq c>0$. Therefore, we have
   $$\dist(\exp_q(t_iu_i),p)=d_E(p)-g(v,u_i)t_i+O(t_i^2)<d_E(p)$$
   for sufficiently large $i$, which leads to a contradiction.
\end{proof}
For any point $p \in \mathfrak{A}$, let $(q_p, v_p)$ denote the unique pair associated to $p$ from Lemma \ref{Lem: unique pair}. We introduce the vector field
\begin{equation}\label{Eq: continuous vector field}
V(p):=(\mathrm d\exp_{q_p})_{(d_E(p){v_p})}(v_p)\in T_pM.
\end{equation}
    By the continuity of the projection map from Lemma \ref{Lem: continuous projection} and the smoothness of the exponential map, $V$ is a continuous unit vector field on $\mathfrak{A}$.
    \begin{lemma}\label{Lem: exponential map}
        For any $p\in \mathfrak A$, the curve
        $$\gamma: (0,r_*)\to \mathfrak A,\,t\mapsto \exp_{q_p}(tv_p),$$
        is a maximal integral curve of the continuous vector field $V$ with the initial condition $$\gamma(d_E(p))=p.$$ Moreover, we have
        $$d_E(\gamma(t))=t\mbox{ for all }t\in (0,r_*).$$
    \end{lemma}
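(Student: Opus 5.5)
We prove first the identity $d_E(\gamma(t))=t$ for all $t\in(0,r_*)$, and then deduce the integral-curve property from it. Set $q=q_p$, $v=v_p$ and $t_0=d_E(p)$.

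\emph{The case $t\le t_0$.} The geodesic $s\mapsto\exp_q(sv)$, $s\in[0,t_0]$, is minimizing from $q$ to $p$, and $q$ is a nearest point of $E$ to $p$. The triangle inequality then gives $d_E(\gamma(t))\ge t_0-(t_0-t)=t$, while $\dist(\gamma(t),q)\le t$. Hence $d_E(\gamma(t))=t$, and $q$ is a nearest point to $\gamma(t)$.

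\emph{The case $t>t_0$.} This is the main obstacle: the geodesic has to keep minimizing the distance to $E$ beyond $p$, and this is exactly where positive reach enters. Let
$$T:=\sup\{t\in[t_0,r_*):d_E(\gamma(s))=s\mbox{ for all }s\le t\}.$$
By continuity of $d_E$, the identity holds up to $T$. Suppose, for contradiction, that $T<r_*$. Put $p_T=\gamma(T)$. Then $d_E(p_T)=T\in(0,r_*)$ and $\dist(p_T,q)\le T$, so $q\in\mathfrak C_{p_T}$. Since $\mathfrak C_{p_T}$ is a singleton, $q=q_{p_T}$. Moreover $v_{p_T}=v$, because $T<\mathfrak i(\partial E)$ and the geodesic from $q$ to $p_T$ is unique.

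Now take points $p'$ slightly beyond $p_T$. For $\epsilon>0$, consider $p_\epsilon:=\gamma(T+\epsilon)$. Since $d_E(p_\epsilon)<r_*$ for small $\epsilon$, Lemma \ref{Lem: unique pair} gives a pair $(q_\epsilon,v_\epsilon)$ for $p_\epsilon$. By Lemma \ref{Lem: continuous projection}, $q_\epsilon\to q$ as $\epsilon\to0$. The aim is to show $q_\epsilon=q$.

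The approach I would try uses the \emph{ball criterion} for positive reach. For $p'\in\mathfrak A$, the open ball $B_{d_E(p')}(p')$ misses $E$ and touches it at $q_{p'}$. I would argue as follows. Suppose $q_\epsilon\ne q$ along a sequence $\epsilon\to0$. The minimizing geodesics from $q_\epsilon$ to $p_\epsilon$ converge to a minimizing geodesic from $q$ to $p_T$, which must be $\gamma$, so $v_\epsilon\to v$. Then run the segment from $p_T$ back toward $q$ and compare the two families of points $\exp_q(sv)$ and $\exp_{q_\epsilon}(sv_\epsilon)$. The first-variation/Taylor expansion from the proof of Lemma \ref{Lem: unique pair} shows that, for $s$ slightly larger than $T$, the point $\gamma(s)$ has a nearest point $q_\epsilon$ with $\dist(\gamma(s),q_\epsilon)<s$. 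This forces the nearest-point map to jump in a neighbourhood of $q$, contradicting continuity of $\Pi_E$ on $E_r$ for $r<\mathfrak r(E)$.

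If that direct argument proves too delicate, the fallback is Federer's classical argument. For $p'$ with $d_E(p')<\mathfrak r(E)$, the map $s\mapsto \Pi_E(\exp_{q_{p'}}(s v_{p'}))$ is constant as long as $s<\mathfrak r(E)$. The proof goes through the openness and closedness of the set of $s$ where it equals $q_{p'}$. Closedness comes from Lemma \ref{Lem: continuous projection}. Openness comes from the uniqueness in $\mathcal U_E$ together with the local estimate that, near $q$, the balls $B_{s}(\exp_q(sv))$ increase in $s$ and remain disjoint from $E$ whenever $\mathfrak C$ stays a singleton.

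Once the identity $d_E(\gamma(t))=t$ is established, the same uniqueness argument gives $(q_{\gamma(t)},v_{\gamma(t)})=(q,v)$ for all $t\in(0,r_*)$. Hence
$$\gamma'(t)=(\mathrm d\exp_q)_{tv}(v)=V(\gamma(t)),$$
so $\gamma$ is an integral curve of $V$ through $p$ at time $t_0$. Maximality follows because $d_E(\gamma(t))=t$ exits $(0,r_*)$ exactly at the endpoints, so $\gamma$ leaves $\mathfrak A$ there.
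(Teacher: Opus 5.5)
Your treatment of $t\le t_0$ is fine, and so is your last paragraph, which deduces $\gamma'=V\circ\gamma$ and maximality from $d_E(\gamma(t))=t$. The gap is the case $t>t_0$, which is the whole content of the lemma and is not actually proved.

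In the ``ball criterion'' argument, the fact that $\gamma(s)$ has a nearest point $q_\epsilon$ with $\dist(\gamma(s),q_\epsilon)<s$ for $s>T$ does not come from a Taylor expansion. It is just your contradiction hypothesis restated: by the $1$-Lipschitz property, $d_E(\gamma(s))<s$ for every $s>T$. It also produces no jump. You yourself note $q_\epsilon\to q$, which is exactly what continuity of $\Pi_E$ predicts, so nothing contradicts Lemma \ref{Lem: continuous projection}. Continuity of the projection plus pointwise uniqueness of nearest points says nothing, by itself, about $E\cap B_{T+\epsilon}(\gamma(T+\epsilon))$.

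The fallback has the same hole. The claim that ``the balls $B_s(\exp_q(sv))$ remain disjoint from $E$ whenever $\mathfrak C$ stays a singleton'' is literally the statement $d_E(\gamma(s))\geq s$ for $s$ slightly beyond $T$. That is precisely the openness you need, so it is asserted rather than derived.

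What is missing is a mechanism forcing $d_E$ to grow at unit rate past $p_T$. The paper obtains it from the continuous unit field $V$; this is where positive reach enters, through the continuity of $\Pi_E$. The steps are:
\begin{enumerate}
\item By Peano's theorem, take an integral curve $\beta$ of $V$ through $p$.
\item The function $f=d_E\circ\beta$ is Lipschitz. At any differentiability point $t_*$, the smooth function $d_*=\dist(\cdot,\Pi_E(\beta(t_*)))$ satisfies $d_*\geq d_E$, with equality at $\beta(t_*)$, and $\nabla d_*(\beta(t_*))=V(\beta(t_*))$. Hence $f'(t_*)=1$, and therefore $f(t)=t$.
\item $\beta$ is then a unit-speed curve whose length from its limit point $q\in E$ equals $d_E$ at its endpoint. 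So it is minimizing, hence a geodesic, hence equal to $\exp_{q_p}(tv_p)$ by Lemma \ref{Lem: unique pair}.
\end{enumerate}
You could graft this onto your supremum argument. Run an integral curve of $V$ from $p_T$ for a short time and concatenate it with $\gamma|_{[0,T]}$; the same estimate shows the concatenation is minimizing from $q$, hence equal to $\gamma$, contradicting the choice of $T$. The paper instead starts the integral curve at $p$ directly, which gives both the distance identity and the integral-curve property at once.
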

\begin{proof}
    From the Peano existence theorem, there exists a $C^1$-smooth integral curve $\gamma$ of $V$ with the initial condition $\gamma(d_E(p)) = p$. We claim $d_E(\gamma(t)) = t$ and $\gamma(t) = \exp_{q_p}(t v_p)$. To see this, we consider the function
    $$f(t)=d_E(\gamma(t)).$$
    Clearly, the function  $f$ is Lipschitz and satisfies $$f(d_E(p))=d_E(p).$$
     In particular, the function $f$ is differentiable almost everywhere. Let us work at a differential point $t_*$ of $f$. For short, we denote 
     $$d_*(\cdot) := \dist(\cdot, \Pi_E(\gamma(t_*))).$$
      Note that we have $d_* \geq d_E$ and the equality holds at $\gamma(t_*)$, so we have 
      $$f'(t_*)=(d_*\circ\gamma)'(t_*)=1.$$
     By integration, this implies $f(t) = t$ and so the integral curve $\gamma$ is defined on the maximal interval $(0, r_*)$. Let $q$ denote the limit point of $\gamma(t)$ as $t \to 0_+$. Note that we have $q\in E$ and also the relation 
    $$d_E(p) \leq \dist(p, q)\leq L(\gamma|_{(0,d_E(p))})=d_E(p).$$
    This implies that $q$ is the nearest point in $E$ to $p$ and that $\gamma$ is the unit-speed minimizing geodesic connecting $q$ to $p$. Therefore, we have
    $q = q_p\in \mathfrak C_p$ and $\gamma(t) = \exp_{q_p}(t v_p)$ by Lemma \ref{Lem: unique pair}.
\end{proof}

This lemma allows us to extend geodesics with nearest point fixed.
\begin{corollary}\label{Cor: nearest point preserving along geodesic}
    If $(q,v)$ satisfies $q\in \partial E$, $v\in T_qM$ with $|v|=1$, and $$\Pi_E(\exp_q(t_*v))=q$$ for some $t_*\in (0,r_*)$, then we have 
    $$\Pi_E(\exp_q(tv))=q\mbox{ and } d_E(\exp_q(tv))=t$$ for all $t\in (0,r_*)$.
\end{corollary}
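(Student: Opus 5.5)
Set $p_*:=\exp_q(t_*v)$. Since $t_*\in(0,r_*)$ and $\Pi_E(p_*)=q$, the point $p_*$ lies outside $E$, because $\Pi_E(p_*)=p_*$ would force $t_*=0$. The plan is to show $d_E(p_*)=t_*$, so that $p_*\in\mathfrak A$, and then to apply Lemma \ref{Lem: exponential map} to $p_*$.

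\emph{Step 1: compute $d_E(p_*)$.} By definition $q=\Pi_E(p_*)$ is a nearest point, so $d_E(p_*)=\dist(p_*,q)$. Since $t_*<r_*\le\mathfrak i(\partial E)\le\inj(M,g,q)$, the geodesic $t\mapsto\exp_q(tv)$ is the unique minimizing geodesic on $[0,t_*]$. Hence $\dist(p_*,q)=t_*$, and so $d_E(p_*)=t_*\in(0,r_*)$, that is, $p_*\in\mathfrak A$.

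\emph{Step 2: identify the pair $(q_{p_*},v_{p_*})$.} By Lemma \ref{Lem: unique pair}, $p_*$ has a unique pair $(q_{p_*},v_{p_*})$ with $q_{p_*}=\Pi_E(p_*)=q$ and $p_*=\exp_q(t_*v_{p_*})$. Both $v_{p_*}$ and $v$ are unit vectors with $\exp_q(t_*v_{p_*})=\exp_q(t_*v)$ and $t_*$ below the injectivity radius at $q$. Injectivity of $\exp_q$ on the ball of radius $\inj(M,g,q)$ then gives $v_{p_*}=v$.

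\emph{Step 3: conclude.} Lemma \ref{Lem: exponential map} applied to $p_*$ says that $\gamma(t)=\exp_q(tv)$, $t\in(0,r_*)$, satisfies $d_E(\gamma(t))=t$. It remains to show $\Pi_E(\gamma(t))=q$. Since $\gamma(t)\in\mathfrak A\subset\mathcal U_E$, the nearest point is unique. Moreover $\dist(\gamma(t),q)\le L(\gamma|_{[0,t]})=t=d_E(\gamma(t))$, so $q$ attains the distance and is therefore that unique nearest point. Thus $\Pi_E(\exp_q(tv))=q$ for all $t\in(0,r_*)$.

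\emph{Main obstacle.} The delicate point is Step 2: the minimizing geodesic from $q$ to $p_*$ must be unique and must coincide with the given ray. This is exactly where the assumption $r_*\le\mathfrak i(\partial E)$ enters.
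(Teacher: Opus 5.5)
Your proof is correct and follows the paper's argument. The paper likewise sets $p=\exp_q(t_*v)$, notes $p\in\mathfrak A$, uses Lemma \ref{Lem: unique pair} to get $q_p=q$ and $v_p=v$, and concludes via Lemma \ref{Lem: exponential map}; you additionally make explicit what the paper leaves implicit, namely the injectivity-radius facts giving $d_E(p_*)=t_*$ and $v_{p_*}=v$, and the final check that $q$ is the nearest point to $\exp_q(tv)$.
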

\begin{proof}
    Let $p = \exp_q(t_* v)$. Then $p$ is a point in $\mathfrak A$. By Lemma \ref{Lem: unique pair}, it follows that $q = q_p$ and $v = v_p$. The desired conclusion then follows from Lemma \ref{Lem: exponential map}.
\end{proof}

The following lemma says that normal exponential map works as well for sets with positive reach.
\begin{lemma}\label{Lem: nearest point preserving along normal direction}
    We have $$\Pi_E(\exp_q(tv))=q\mbox{ and }d_E(\exp_q(tv))=t$$ for all $t\in (0,r_*)$, $q\in \partial E$ and $v\in \mathcal N_qE$ with $|v|=1$.
\end{lemma}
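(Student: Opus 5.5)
The plan is to reduce the claim to showing that every unit normal vector $v\in\mathcal N_qE$ is \emph{realized}, meaning that $\Pi_E(\exp_q(t_*v))=q$ for some $t_*\in(0,r_*)$. Once this holds, Corollary \ref{Cor: nearest point preserving along geodesic} gives $\Pi_E(\exp_q(tv))=q$ and $d_E(\exp_q(tv))=t$ for all $t\in(0,r_*)$. Let $R_q\subset \mathcal N_qE$ be the set of realized unit vectors at $q$; the inclusion $R_q\subset\mathcal N_qE$ is Lemma \ref{Lem: unique pair}. For $w\in R_q$ and any $r<r_*$, Corollary \ref{Cor: nearest point preserving along geodesic} shows that the open ball $B_r(\exp_q(rw))$ is disjoint from $E$ and touches $E$ at $q$. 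Conversely, if such an exterior ball of radius $r<r_*$ touches $E$ at $q$, then for $s<r$ the point $\exp_q(sw)$ has $q$ as a nearest point, and uniqueness (Lemma \ref{Lem: continuous projection}) makes $q$ its projection.

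\emph{Step 1: $R_q$ is closed.} Take $w_i\in R_q$ with $w_i\to w$. The balls $B_r(\exp_q(rw_i))$ avoid $E$, so the limit ball $B_r(\exp_q(rw))$ avoids $E$ as well, and it contains $q$ on its boundary. By the exterior-ball characterization above, $w\in R_q$.

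\emph{Step 2: the polar of $R_q$ lies in the tangent cone.} I want: if $u\in T_qM$ satisfies $g(u,w)\le 0$ for all $w\in R_q$, then $u\in\mathcal T_qE$. Suppose instead $u\notin\mathcal T_qE$. Then there are $c>0$ and $\varepsilon_0>0$ such that $x_\varepsilon:=\exp_q(\varepsilon u)$ satisfies $d_E(x_\varepsilon)\ge c\varepsilon$ for all $\varepsilon<\varepsilon_0$. Set $q_\varepsilon=\Pi_E(x_\varepsilon)$ and let $v_\varepsilon$ be its direction from Lemma \ref{Lem: unique pair}. Then $q_\varepsilon\to q$, and $v_\varepsilon\in R_{q_\varepsilon}$. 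Passing to a subsequence, $v_\varepsilon\to w$, and the exterior-ball argument of Step 1 (balls based at $q_\varepsilon$ converging to a ball based at $q$) gives $w\in R_q$. Next I rescale by $\varepsilon$ in normal coordinates at $q$, using $\dist(x_\varepsilon,q_\varepsilon)\le \dist(x_\varepsilon,q)=\varepsilon$. The rescaled points $\varepsilon^{-1}\exp_q^{-1}(q_\varepsilon)$ converge to some $a\in\mathcal T_qE$, and the limit relation reads $u=a+\lambda w$ with $\lambda\ge c$. The exterior $r$-balls at $q_\varepsilon$ become half-spaces $\{y: g(y-a,w)\le 0\}$ in the limit, which forces $g(a,w)\le 0$. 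Hence $g(u,w)\ge\lambda>0$, contradicting the hypothesis on $u$.

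\emph{Step 3: convex separation, and the main obstacle.} Let $v\in\mathcal N_qE$ be a unit vector and let $C$ be the closed convex cone generated by $R_q$. If $v\notin C$, the separation theorem gives $u$ with $g(u,w)\le0$ for all $w\in C$ and $g(u,v)>0$. Step 2 then puts $u\in\mathcal T_qE$, contradicting $v\in\mathcal N_qE$. So $v\in C$, and it remains to show that $R_q$ is already the set of unit vectors of a convex cone, i.e.\ that $\mathbb R_{\ge0}R_q$ is convex. I expect this to be the main obstacle.

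To attack it, take $v=(\alpha w_1+\beta w_2)/|\alpha w_1+\beta w_2|$ with $w_1,w_2\in R_q$. Suppose $q'=\Pi_E(\exp_q(tv))\neq q$ for some small $t$, and let $v'$ be the direction at $q'$. By Corollary \ref{Cor: nearest point preserving along geodesic}, the ball $B_r(\exp_{q'}(rv'))$ avoids $E$, so $\dist(\exp_{q'}(rv'),q)\ge r$. I would combine this with the second-order expansion of $\dist(\exp_q(\cdot),\cdot)$, with $r$ fixed and $t\to0$, to obtain an inequality of Federer type,
$$g\bigl(v',\exp_{q'}^{-1}(q)\bigr)\le \frac{\dist(q,q')^2}{2r}+O\bigl(\dist(q,q')^3\bigr).$$
Applying the same inequality at $q$ with the exterior balls in directions $w_1$ and $w_2$ controls $g(w_j,\exp_q^{-1}(q'))$, and these bounds should force $\dist(q,q')=o(t)$ while $\dist(\exp_q(tv),q')<t$, which is incompatible. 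Then $v\in R_q$, and the claim follows from Corollary \ref{Cor: nearest point preserving along geodesic}. If this direct convexity argument becomes too messy, the fallback is to run the whole argument in normal coordinates at $q$, where $g$ is Euclidean up to $O(|x|^2)$ errors, and transplant Federer's Euclidean proof of \cite[Theorem~4.8]{Fed59}, tracking the curvature error terms.
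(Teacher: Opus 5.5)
\textbf{Gap in Step 3.} Steps 1 and 2 are essentially sound. There is a sign slip at the end of Step 2: the rescaled exterior balls at $q_\varepsilon$ become the half-spaces $\{y: g(y-a,w)>0\}$, and these must miss the rescaled image $0$ of $q\in E$. This gives $g(a,w)\ge 0$, which is the sign you actually need for $g(u,w)=g(a,w)+\lambda\ge\lambda$.

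The genuine gap is Step 3. Given Steps 1 and 2, the closed convex cone generated by $R_q$ is exactly $\mathcal N_qE$. So the claim that $\mathbb R_{\ge 0}R_q$ is convex is equivalent to the lemma itself, and your reduction has moved the entire difficulty into the step you only sketch.

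The sketch also does not close as written. The conditions ``$\dist(q,q')=o(t)$'' and ``$\dist(\exp_q(tv),q')<t$'' are not incompatible: in the Euclidean model, $q'=q+t^2v$ satisfies both. To finish along your route, set $x=\exp_q^{-1}(q')$ and $d=\dist(\exp_q(tv),q')$. You would have to combine the one-sided bounds $g(v,x)\le C|x|^2$ at $q$ and $g(v',-x)\le C|x|^2$ at $q'$ with the relation $dv'=tv-x+O(t^2|x|)$ in normal coordinates at $q$. That gives $|x|^2(1-O(t))\le 0$, i.e.\ $q'=q$ outright. This error analysis is precisely the part you deferred.

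\textbf{The missing idea: no convexity is needed.} Run your Step 2 blow-up directly with $u=v$. Since $v\in\mathcal N_qE$ is a unit vector, $v\notin\mathcal T_qE$. Put $x_\varepsilon=\exp_q(\varepsilon v)$ and $q_\varepsilon=\Pi_E(x_\varepsilon)$, and let $a$ be a subsequential limit of $\varepsilon^{-1}\exp_q^{-1}(q_\varepsilon)$. Then:
\begin{itemize}
\item $a\in\mathcal T_qE$ by definition, and $|v-a|\le 1$ because $\dist(x_\varepsilon,q_\varepsilon)\le\dist(x_\varepsilon,q)=\varepsilon$.
\item Hence $|a|^2\le 2g(a,v)\le 0$, where the last inequality uses $v\in\mathcal N_qE$. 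So $a=0$.
\item Consequently $\varepsilon^{-1}d_E(x_\varepsilon)\to 1$, and the realized directions $v_\varepsilon$ at $q_\varepsilon$ converge to $v$ itself.
\item Your Step 1 closure argument (equivalently, continuity of $\Pi_E$ together with Corollary \ref{Cor: nearest point preserving along geodesic}) then gives $v\in R_q$.
\end{itemize}

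This is exactly the paper's proof. It shows that the rescaled nearest points converge to $0$, using only that their limit lies in $\bar B_1(e_n)\cap\mathcal T_qE$ while $v\in\mathcal N_qE$. It then deduces $v_i\to v$ and passes $\Pi_E(\exp_{q_i}(t_*v_i))=q_i$ to the limit. The separation argument and the convexity of the realized cone become unnecessary.
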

\begin{proof}
From $v\in \mathcal N_qE$ and $|v|=1$, we must have $$\gamma((0,\ve))\subset \mathfrak A$$ for some small constant $\ve>0$.  Take a sequence $$t_i\in (0,\ve)$$ with $t_i \to 0$ as $i\to\infty$. Denote $$p_i := \exp_q(t_i v)\in \mathfrak A$$ and $$q_i := \Pi_E(p_i)\in \partial E.$$ By Lemma \ref{Lem: unique pair}, we have $$p_i = \exp_{q_i}(d_E(p_i) v_i)$$ for some $v_i \in \mathcal N_{q_i}E$ with $|v_i| = 1$. Up to a subsequence, we may assume $$(q_i, v_i) \to (q_\infty, v_\infty)$$ as $i \to \infty$. 
Note that we have $\dist(q, q_i) \leq 2t_i$ and so $q_\infty = q$. 

We claim that $v_\infty = v$.
  To see this, fix a geodesic normal coordinate chart $\mathcal V$ centered at $q$ such that $v$ coincides with the coordinate vector $(\partial_n)_q$. Let $$\mathbb B_2 \subset \mathbb R^n$$ denote the Euclidean ball of radius two centered at the origin, and define
  $$\Psi_i : \mathbb B_2 \to \mathcal V,\,x\mapsto \exp_q(t_i x^j(\partial_j)_q).$$
  Set
  $$g_i:=\Psi_i^*(t_i^{-2}g).$$
 Then, up to a subsequence we have
 \begin{itemize}
 \item $g_i\to g_{euc}$;
 \item $\Psi_i^{-1}(q)=0$;
 \item $\Psi_i^{-1}(p_i)=e_n$;
 \item and $\Psi_i^{-1}(q_i)\to w$ as $i\to \infty$ for some $w\in \mathbb B_2$.
 \end{itemize}
  
 As the first step, let us show $w = 0$. Otherwise, we have $$e_n \cdot w > 0,$$ since we know $w\in B_1(e_n)$ from a distance comparison. In particular, for sufficiently large $i$, there exists a unique vector $u_i \in T_0 \mathbb B_2$ with $|u_i|_{g_i} = 1$ such that
  $$\Psi_i^{-1}(q_i)=\exp^{g_i}_0\left(s_iu_i\right),$$
  where $u_i\to w/|w|$ and $s_i=O(1)$ as $i\to\infty$. Decompose
  $$u_i=a_i^je_j\mbox{ and }\frac{w}{|w|}=a^je_j.$$
  It is clear that $a_i^j\to a^j$ as $i\to\infty$ and $a^n>0$. Note that we can write $$q_i=\exp_q(s_it_ia_i^j(\partial_j)_q),\mbox{ where }s_it_i\to 0\mbox{ as }i\to\infty.$$ 
  By definition, we have
  $$a^j(\partial_j)_q\in\mathcal T_qE$$ 
  from taking limit of $a_i^j(\partial_j)_q$. On the other hand, we have $$g(v,a^j(\partial_j)_q)=a^n>0,$$ which contradicts to the fact that $v \in \mathcal N_q E$. Hence $w = 0$.

  As the second step, we show $v_\infty=v$.  Decompose $$v_i = c_i^j (\partial_j)_{q_i}.$$ 
   Since the $g_i$-geodesics from $\Psi_i^{-1}(q_i)$ to $e_n$ converge to the Euclidean geodesic from $w$ to $e_n$, we must have $c_i^j e_j \to e_n$ as $i \to \infty$. This implies $c_i^j \to \delta^{jn}$ and so $v_\infty=v$. 
  
 We are ready to complete the proof for the whole lemma. Fix 
 $$t_* \in (0, r_*).$$ By the extension property from Corollary \ref{Cor: nearest point preserving along geodesic}, we have $$\Pi_E(\exp_{q_i}(t_* v_i)) = q_i.$$ Taking the limit as $i \to \infty$ yields $$\Pi_E(\exp_q(t_* v)) = q.$$ Applying Corollary \ref{Cor: nearest point preserving along geodesic} once again, we conclude that $\Pi_E(\exp_q(t v)) = q$ and $d_E(\exp_q(tv))=t$ for all $t \in (0, r_*)$.
\end{proof}

\subsection{From quasi-regularity to positive reach}

Let $\Omega\subset M$ be an open subset and $\Sigma$ be a union of some of the connected components of $\partial\Omega$, which is compact and quasi-regular relative to $\Omega$. For convenience, we use 
$$d_\Sigma:\Omega\to \mathbb R$$ to denote the distance function to $\Sigma$ in $(M,g)$ restricted to $\Omega$.

In the following, for each constant $s>0$ we denote
$$F_s:=\{q\in\Omega:\dist(q,\Sigma)\geq s\}$$
and
$$\Sigma_s:=\partial F_s\cap \Omega.$$
Denote
\begin{equation}\label{Eq: boundary distance gap}
s_*:=\dist(\Sigma,\partial\Omega\setminus\Sigma)>0.
\end{equation}
By taking $s<s_*$, we can guarantee that $\Sigma_s$ is compact in $\Omega$.

%Note that we can always guarantee
%\begin{equation}\label{Eq: near Sigma}
%\dist(\partial F_s,\partial\Omega\setminus\Sigma)>\frac{1}{2}\dist(\Sigma,\partial\Omega\setminus \Sigma).
%\end{equation}
%by taking $s$ small enough. In particular, $\Sigma_s$ is compact in $\Omega$.

We are going to show that $F_s$ has positive reach in $\Omega$ for almost every $s$ based on the Sard-type theorem for distance function by Rifford, combined with the characterization result for sets with positive reach by Bangert.

As a preparation, let us recall several different notions of differentials and related results. First we recall the definition of the Clarke differential, which is used in the Sard-type theorem for distance functions.
\begin{definition}
    Let $f$ be a locally Lipschitz function on $\Omega$, and $\mathcal D_f$ be the differentiable set of $f$. The Clarke differential \cite{Cla75} of $f$ at $q$ is defined by
    $$(\partial_Cf)(q):=\co\left\{\lim_{q_i\to q}(\nabla_g f)(q_i):q_i\in \mathcal D_f\right\},$$
where we use $\co(A)$ to denote the convex hull generated by the set $A$.
\end{definition}
\begin{remark}
    It is clear that $(\partial_Cf)(q)$ is a bounded closed convex set in $T_q\Omega$.
\end{remark}

%We note that the proof in \cite[Section 3.2]{Rif04} of the Sard-type theorem for the distance function to a smooth submanifold \cite[Theorem 1]{Rif04} actually applies without change to the distance function to a quasi-regular boundary.
We note that the Sard-type theorem for the distance function to a smooth submanifold \cite[Theorem 1]{Rif04} even holds the distance function to a quasi-regular boundary.

\begin{lemma}\label{Lem: Sard}
    If $\Sigma$ is quasi-regular relative to $\Omega$, then we have $$0\notin(\partial_Cd_\Sigma)(q)$$ at any point $q\in\Sigma_s$ for almost every $s\in (0,s_*)$.
\end{lemma}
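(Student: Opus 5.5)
We adapt Rifford's argument for the Sard-type theorem \cite[Theorem 1]{Rif04}. The aim is to show that the set of \emph{Clarke critical values} of $d_\Sigma$ on $\Omega\cap\{d_\Sigma<s_*\}$ has Lebesgue measure zero. Concretely, we want to show that
$$\mathcal K:=\{q\in\Omega: d_\Sigma(q)<s_*,\ 0\in(\partial_Cd_\Sigma)(q)\}$$
satisfies $\mathcal L^1(d_\Sigma(\mathcal K))=0$. Once this holds, every $s\in(0,s_*)\setminus d_\Sigma(\mathcal K)$ has the required property, because $\Sigma_s\subset\{d_\Sigma=s\}$.

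The first step is a description of the Clarke differential at $q$ through minimizing geodesics to $\Sigma$. Let $\Gamma_q$ be the set of unit-speed minimizing geodesics $\gamma:[0,d_\Sigma(q)]\to M$ with $\gamma(0)=q$ and $\gamma(d_\Sigma(q))\in\Sigma$. We claim that
$$(\partial_Cd_\Sigma)(q)=\co\{-\gamma'(0):\gamma\in\Gamma_q\}.$$
We prove this as in the smooth case. At a differentiability point, $\nabla d_\Sigma$ equals $-\gamma'(0)$ for the unique minimizing geodesic. Limits of such geodesics are again minimizing geodesics to $\Sigma$, since $\Sigma$ is compact and these are limits of minimizing geodesics. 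Conversely, every $\gamma\in\Gamma_q$ arises as such a limit by moving $q$ slightly along $\gamma$.

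The role of quasi-regularity is the following. For $q\in\Omega$, each $\gamma\in\Gamma_q$ stays in $\Omega$ before reaching $\Sigma$, up to a first-hitting argument. It is possible that $\gamma$ hits $\partial\Omega\setminus\Sigma$ first, but this is excluded when $d_\Sigma(q)<s_*$. Hence the endpoint of $\gamma$ lies in $\mathbf\Lambda(\Omega,\Sigma)$. Since $\mathbf\Lambda(\Omega,\Sigma)$ is a smooth hypersurface, each $\gamma$ meets it orthogonally by the first variation. Thus $d_\Sigma$ coincides near $q$ with the minimum, over finitely or compactly many local pieces, of distance functions to the smooth hypersurface $\mathbf\Lambda$. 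This places us in exactly the setting of \cite{Rif04}.

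The second step is the core of Rifford's argument. Locally, we represent $d_\Sigma$ as
$$d_\Sigma(q)=\min_{(y,v)} \Phi(q;y,v),$$
where $(y,v)$ ranges over the unit normal bundle of $\mathbf\Lambda$ and $\Phi$ is the length functional attached to the normal exponential map. In other words, $d_\Sigma$ is locally a smooth min-type function, or alternatively it is semiconcave. For critical points, $0$ is a convex combination of the initial velocities $-\gamma'(0)$. Rifford shows that the critical values of such functions are contained in the critical values of a smooth map on a finite-dimensional parameter space. The relevant parameter space is $(\mathbf\Lambda\times[0,s_*))^{k}$ with $k\le n+1$ by Carathéodory. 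The map is
$$\big((y_1,t_1),\dots,(y_k,t_k)\big)\mapsto t_1$$
restricted to the smooth submanifold where all $\exp_{y_j}(t_j\nu(y_j))$ coincide and the $t_j$ are equal. Critical points of $d_\Sigma$ with multiplicity $k$ correspond to critical points of this restricted map. The classical Sard theorem, applied to the smooth data coming from $\mathbf\Lambda$ and $\exp$, then gives measure zero. The standard Sard theorem requires enough regularity, which holds because $\mathbf\Lambda$ is $C^\infty$.

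The main obstacle is that $\mathbf\Lambda(\Omega,\Sigma)$ may be noncompact, for instance an open dense part of $\Sigma$, whereas Rifford uses a closed submanifold. We handle this by exhausting $\mathbf\Lambda$ by countably many compact pieces, which preserves the measure-zero conclusion under countable unions. We also need to check that near each $q\in\mathcal K$ only geodesics ending in $\mathbf\Lambda$ contribute, which is the first step above, and that the local min-representation is uniform. Since the Sard argument is local in parameter space, it suffices to work in the normal exponential chart restricted to compact subsets of $\mathbf\Lambda$. We expect this countable localization to be the only genuine modification; the rest of \cite[Section 3.2]{Rif04} then applies verbatim.
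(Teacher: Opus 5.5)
Your overall route is the paper's. The load-bearing observation is that for $d_\Sigma(q)<s_*$ every minimizing geodesic from $q$ to $\Sigma$ stays in $\Omega$ until it reaches $\Sigma$, and hence ends in $\mathbf\Lambda(\Omega,\Sigma)$. So on $d_\Sigma^{-1}((0,s_*))$ the function $d_\Sigma$ is a distance to a smooth hypersurface, and Rifford's Sard theorem takes over. Your countable localization over compact pieces of $\mathbf\Lambda$ is a useful addition. It makes explicit something the paper's one-line appeal to \cite[Theorem 1]{Rif04} leaves implicit, namely that $\mathbf\Lambda$ need not be closed.

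The step that does not work as written is your description of the Sard mechanism. Write $E(y,t):=\exp_y(t\nu(y))$. The set $Z_k$ of configurations $((y_j,t))_{j\le k}$ with all $E(y_j,t)$ equal is the preimage of the diagonal of $M^k$ under a smooth map that need not be transverse to it. Transversality can fail at focal configurations, and these cannot be excluded at Clarke-critical points; for the distance to a circle of latitude on the round $S^2$, the pole is critical and every normal geodesic focuses there. So $Z_k$ may be singular, and ``critical points of $t_1|_{Z_k}$'' together with Sard for that restriction are not available.

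The clean fix is an unconstrained min-representation. For $y$ in a small ball $B$ around $q$ and $c$ slightly below $d_\Sigma(q)$, set
\[
\phi(y,\theta)=c+\dist(\exp_x(c\nu),y),\qquad d_\Sigma(y)=\min_{\theta\in\overline\Theta}\phi(y,\theta).
\]
Here $\theta=(x,\nu)$ ranges over a small relatively compact open set $\Theta$ in the unit normal bundle of $\mathbf\Lambda$ containing the foot-point/direction pairs of $q$. Then $\phi$ is $C^\infty$ on $B\times\Theta$, since $\dist(\exp_x(c\nu),y)\ge d_\Sigma(y)-c>0$. Moreover, the minimizers are exactly the feet of $y$ and lie in $\Theta$.

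Now suppose $0=\sum_i\lambda_i\nabla_y\phi(y,\theta_i)$ with minimizers $\theta_i$. Then $(y,\theta_1,\dots,\theta_k,\lambda)$ is a critical point of the smooth function $\Phi=\sum_i\lambda_i\phi(y,\theta_i)$ on $B\times\Theta^k\times\{\sum_i\lambda_i=1\}$:
\begin{itemize}
\item the $\theta_i$-derivatives vanish by interior minimality;
\item the $\lambda$-derivatives vanish because all $\phi(y,\theta_i)$ equal $d_\Sigma(y)$.
\end{itemize}
Its critical value is $d_\Sigma(y)$, so the classical Sard theorem finishes the argument.

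If you prefer your constrained formulation, replace restriction to $Z_k$ by the Lagrangian $t+\sum_{j\ge 2}\mu_j\cdot(E(y_j,t)-E(y_1,t))$ in a chart. By the Gauss lemma, choosing $\mu_j=a\lambda_j\,g(\gamma_j',\cdot)$, with $\gamma_j'$ the velocity of $t\mapsto E(y_j,t)$ at the common endpoint and a suitable $a<0$, turns a Clarke-critical configuration into a free critical point.
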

\begin{proof}
We explain how to reduce to the smooth case, where the key observation is that the distance function $d_\Sigma$ near $\Sigma$ is actually realized by $\mathbf \Lambda(\Omega,\Sigma)$. To see this, we take any point $p\in \Omega$ with $d_\Sigma(p)<s_*$. We use $\Pi(p)$ to denote the collection of the nearest point in $\Sigma$ to $p$. For any point $q\in \Pi(p)$, there is a unit-speed minimizing geodesic $$\gamma:[0,s]\to (M,g)$$ connecting $p$ to $q$. Since we have $s<s_*$, it follows $$\gamma([0,s))\subset \Omega$$ and so $q\in \mathbf\Lambda(\Omega,\Sigma)$. To sum up, we already know
$$d_\Sigma(\cdot)=\dist(\cdot,\mathbf\Lambda(\Omega,\Sigma))$$
in $d_\Sigma^{-1}((0,s_*))$, and the lemma follows immediately from the Sard-type result \cite[Theorem 1]{Rif04}.
\end{proof}

Next we investigate differentials for semi-concave functions.
\begin{definition}
    A function $f$ on $\Omega$ is {\it concave} if the composed function 
    $f\circ \gamma$ is concave along every geodesic $\gamma$ in $\Omega$. A function $f$ on $\Omega$ is {\it semi-concave} if for any $q\in\Omega$ we can find a small geodesic ball $B_q$ centered at $q$ and a smooth function $h$ on $B_q$ such that $f+h$ is a concave function on $B_q$.
\end{definition}

For semi-concave functions we can introduce the super-differential at each point as below.

\begin{definition}
    Let $f$ be a semi-concave function on $\Omega$. Then the super-differential of $f$ at $q$ is defined by
    $$(\partial_Sf)(q):=\{w\in T_qM:\partial_vf\leq w\cdot v\mbox{ for all }v\in T_qM\},$$
    where $\partial_vf$ denotes the directional derivative of $f$ along $v$.
\end{definition}

The following lemma shows that the Clarke differential coincides with the super-differential for semi-concave functions.
\begin{lemma}\label{Lem: two gradients are same}
    Let $f$ be a semi-concave function on $\Omega$. Then $f$ is a locally Lipschitz function and we have $\partial_Cf=\partial_Sf$ at every point $q\in \Omega$.
\end{lemma}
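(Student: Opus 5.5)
The argument is local, so I fix $q\in\Omega$ and work in a small convex geodesic ball $B_q$ on which $f+h$ is concave for some smooth $h$. Since $\partial_C$ and $\partial_S$ both satisfy the sum rule with a $C^1$ function ($\partial(f+h)=\partial f+\nabla h$), it is enough to treat a concave $F=f+h$ on $B_q$. I will pass to normal coordinates and compare with the Euclidean case, keeping track of the $O(r)$ metric error; this error vanishes at the base point. Alternatively, I can choose $h$ so that concavity holds along geodesics of $g$ and then work intrinsically with $\exp_q$.

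\textbf{Local Lipschitz property.} A concave function on a convex ball is bounded below on a smaller ball, by comparison along geodesics from an interior point. It is also bounded above, since concavity along a geodesic through a point bounds that value by the values at the endpoints. Together these give the standard estimate $|F(x)-F(y)|\le \frac{2\sup|F|}{r}\dist(x,y)$ on $B_{r}$ inside $B_{2r}$. By Rademacher, $F$ is then differentiable almost everywhere, so $\partial_C F$ is well-defined. Concavity along geodesics also makes one-sided directional derivatives $\partial_vF(q)$ exist, and $v\mapsto\partial_vF(q)$ is concave and positively homogeneous.

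\textbf{Inclusion $\partial_C F\subset\partial_S F$.} Let $x_i\to q$ be differentiability points with $\nabla F(x_i)\to w$. Concavity along the geodesic from $x_i$ in direction $v$ gives $F(\exp_{x_i}(tv))\le F(x_i)+t\langle\nabla F(x_i),v\rangle$ for small $t>0$. Passing $i\to\infty$, after parallel-transporting $v$ and using continuity of $F$, gives $F(\exp_q(tv))\le F(q)+t\langle w,v\rangle$. Hence $\partial_vF(q)\le\langle w,v\rangle$, so $w\in\partial_SF(q)$. Since $\partial_SF(q)$ is convex and closed, the convex hull of such limits also lies in $\partial_SF(q)$.

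\textbf{Inclusion $\partial_SF\subset\partial_CF$.} This is the main obstacle. I would argue by Hahn--Banach. If $w\in\partial_SF(q)\setminus\partial_CF(q)$, there is a unit $v$ with $\langle w,v\rangle<\min_{\xi\in\partial_CF(q)}\langle \xi,v\rangle$. I then compute $\partial_vF(q)$ by integrating along almost every nearby parallel segment, using Fubini so that almost every such line meets differentiability points for almost every $t$:
$$F(\exp_{y}(tv))-F(y)=\int_0^t\langle\nabla F,\dot\gamma\rangle.$$
The gradients along these segments lie within any $\varepsilon$-neighbourhood of $\partial_CF(q)$ for small $t$ and $y$ near $q$, by the definition of $\partial_C$ and upper semicontinuity. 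This yields
$$\frac{F(\exp_y(tv))-F(y)}{t}\ge\min_{\partial_CF(q)}\langle\xi,v\rangle-\varepsilon.$$
Letting $y\to q$ via continuity, then $t\to0$, gives $\partial_vF(q)\ge\min_{\partial_CF(q)}\langle\xi,v\rangle>\langle w,v\rangle$. This contradicts $w\in\partial_SF(q)$.

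The delicate points are twofold. First, the Fubini/parallel-segment argument must be made in coordinates, with geodesics replaced by coordinate lines up to $o(t)$ errors controlled by the Lipschitz bound. Second, one has to confirm that the directional derivative along coordinate lines agrees with the one along geodesics at $q$, which follows from the Lipschitz property.
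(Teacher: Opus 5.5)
Your proof is correct. The Lipschitz estimate via an extended geodesic matches the paper, and so does the inclusion $\partial_CF\subset\partial_SF$, obtained by passing the supergradient inequality at differentiability points $x_i\to q$ to the limit. The reverse inclusion, however, takes a different route.

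\textbf{The paper's route.} After the same separation step, the paper stays with concavity. Using only density of $\mathcal D_f$, it picks $q_i=\exp_q(t_iv_i)\in\mathcal D_f$ with $v_i$ near the separating direction. It then applies the supergradient inequality \emph{at $q_i$} along the geodesic back to $q$, namely $f(q)\le f(q_i)-t_i\langle\nabla f(q_i),v_i\rangle$. Combined with the upper bound on $f(q_i)-f(q)$ coming from $w\in\partial_Sf(q)$, a limit gradient $w_*\in\partial_Cf(q)$ then violates the separation.

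\textbf{Your route.} You instead run Clarke's mean-value argument. Fubini along almost every coordinate line near $q$, together with upper semicontinuity of limiting gradients, yields $\partial_vF(q)\ge\min_{\xi\in\partial_CF(q)}\langle\xi,v\rangle$. In normal coordinates at $q$, the line through $q$ is exactly $t\mapsto\exp_q(tv)$, so no error term survives once you let $y\to q$ by continuity.

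\textbf{Comparison.} Your route costs full measure of $\mathcal D_F$ (not just density) and a coordinate set-up. In exchange, it uses concavity only through the Lipschitz bound and the existence of $\partial_vF(q)$. So it shows $\partial_S\subset\partial_C$ for any locally Lipschitz function with one-sided directional derivatives. The paper's argument is shorter but genuinely tied to concavity. Your explicit sum-rule reduction to the concave case is also more careful than the paper's ``we may assume $f$ concave''.

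\textbf{A small correction in the boundedness step.} Your two justifications are swapped. Concavity along a geodesic bounds the value at an \emph{interior} point from \emph{below} by the endpoint values; iterating over finitely many anchor points gives a lower bound near $q$. The upper bound instead comes from extending the geodesic from $x$ backwards through $q$ and using that lower bound at the far end: $F(\exp_q(sv))\le F(q)+\frac{s}{\varepsilon}\bigl(F(q)-F(\exp_q(-\varepsilon v))\bigr)$.
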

\begin{proof}
    Since we are verifying local properties, we may assume $f$ itself to be concave for simplicity. In this case, the values of $f$ near any point $q$ can be bounded from below by convex combinations of finitely many values $f(q_i)$ at points $q_i$ near $q$, as in the Euclidean setting. Consequently, there exists a small geodesic ball $B_q$ centered at $q$ of some small radius $\varepsilon$ such that 
    $$f \ge c$$ in $B_q$ for some constant $c$. 
    
    On the other hand, since the function $f$ is concave along geodesics, for every unit vector $v \in T_qM$ and $s \in [0,\varepsilon)$, we have
    $$f(\exp_q(sv))\leq f(q)+ \frac{f(q)-f(\exp_q(-\ve v))}{\ve}s.$$
    In particular, $f$ is also bounded from above in $B_q$. Therefore, we obtain 
    $$|f|\leq C$$ in $B_q$ for some constant $C$.

    Let $B_q^*$ denote the geodesic ball of radius $\varepsilon/2$ centered at $q$. For any two points $q_1, q_2 \in B_q^*$, we take $\gamma$ to be the unit‑speed minimizing geodesic from $q_1$ to $q_2$. Extending the geodesic $\gamma$ beyond $q_2$ by length $\varepsilon/2$, the endpoint $q_3$ lies in $B_q$. By concavity of $f$ along geodesics,
    $$\frac{f(q_2)-f(q_1)}{\dist(q_1,q_2)}\geq \frac{f(q_3)-f(q_2)}{\ve/2}\geq -\frac{4C}{\ve}.$$
    Since $q_1$ and $q_2$ are arbitrary, we see that $f$ is locally Lipschitz, and so $\partial_Cf$ is well-defined.
    
    In the following, let us verify that $$(\partial_C f)(q) = (\partial_S f)(q)$$ at every point $q\in \Omega$.
    Note first that, by definition, $(\partial_S f)(q)$ is an intersection of closed half-spaces in $T_qM$, and hence is closed and convex.
    First we show $(\partial_Cf)(q) \subset (\partial_Sf)(q)$. It suffices to consider
\[
    w=\lim_{i\to\infty}(\nabla_g f)(q_i),
    \qquad q_i\in D_f,\quad q_i\to q.
\]
 Since $f$ is concave along geodesics, we have
    $$f(\exp_{q_i}(tv))\leq f(q_i)+t(\nabla_gf)(q_i)\cdot v\mbox{ for all }v\in T_{q_i}M.$$
    Passing to the limit yields
    $$f(\exp_q(tv))\leq f(q)+tw\cdot v\mbox{ for all }v\in T_qM.$$
    This implies $w \in (\partial_S f)(q)$. Taking convex hulls yields \((\partial_C f)(q)\subset(\partial_S f)(q)\).

Next we show $(\partial_Sf)(q)\subset (\partial_Cf)(q)$. Otherwise, we can take $w \in (\partial_S f)(q)$ but $w \notin (\partial_C f)(q)$. Recall that $(\partial_Cf)(q)$ is a bounded closed convex set in $T_q\Omega$.
By the separating hyperplane theorem, there exists a vector $v \in T_q \Omega$ such that 
$$w\cdot v<\min_{w_*\in(\partial_Cf)(q)}w_*\cdot v.$$
 From the continuity, there exist a closed neighborhood $V$ of $v$ and a positive constant $\delta$ such that
\begin{equation}\label{Eq: convex separating}
    w\cdot v_*<\min_{w_*\in(\partial_Cf)(q)} w_*\cdot v_*-\delta
\end{equation} 
for all $v_*\in V$. Let us fix a sequence of positive constants $t_i\to 0$. Note that we can pick $v_i\in V$ such that $$q_i:=\exp_q(t_iv_i)\in \mathcal D_f$$ since $\mathcal D_f$ is dense. Up to a subsequence, we may assume $$(\nabla_gf)(q_i)\to w_*\in (\partial_Cf)(q)$$ and 
$$v_i\to v_\infty\in V\mbox{ as }i\to\infty.$$
Since $f$ is concave along geodesics and it is locally Lipschitz, 
we have
\[
    f(q)\leq f(q_i)-t_i(\nabla_gf)(q_i)\cdot v_i= f(q_i)-t_i w_*\cdot v_\infty+o(t_i).
\]
    On the other hand, by the locally Lipschitz of $f$ and the definition of $\partial_Sf$, we have 
    $$f(q_i)=f(\exp_q(t_iv_\infty))+o(t_i)\leq f(q)+t_iw\cdot v_\infty+o(t_i).$$
    For $i$ large enough we obtain a contradiction to \eqref{Eq: convex separating}.
\end{proof}

\begin{corollary}\label{Cor: closure once more}
    We have $\partial_S^*f=\partial_Sf=\partial_Cf$ at every point $q\in \Omega$, where
   \[
    (\partial_S^*f)(q)
    :=
    \operatorname{co}
    \left\{v\in T_qM:v=\lim_{i\to\infty}v_i,\quad v_i\in(\partial_Sf)(q_i),\quad q_i\to q \right\}.
\]
\end{corollary}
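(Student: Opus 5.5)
The corollary asserts $\partial_S^*f=\partial_Sf=\partial_Cf$. Lemma \ref{Lem: two gradients are same} already gives $\partial_Sf=\partial_Cf$ at every point, so it suffices to prove $\partial_S^*f=\partial_Cf$. I will prove the two inclusions separately. As in the proof of Lemma \ref{Lem: two gradients are same}, I work locally and assume, after adding a smooth function $h$ on a small ball $B_q$, that $f$ is concave there. The general case follows because adding a smooth $h$ shifts $\partial_S$, $\partial_C$ and $\partial_S^*$ by the same continuous vector field $\nabla h$, and this shift commutes with limits and convex hulls.

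\emph{Inclusion $\partial_Cf\subset\partial_S^*f$.} Take a generator $w=\lim(\nabla_gf)(q_i)$ with $q_i\in\mathcal D_f$ and $q_i\to q$. At a differentiable point, concavity along geodesics gives $(\partial_Sf)(q_i)=\{(\nabla_gf)(q_i)\}$; at the very least $\nabla_gf(q_i)\in(\partial_Sf)(q_i)$, by Lemma \ref{Lem: two gradients are same} applied at $q_i$. So $w$ is among the generating limits in the definition of $\partial_S^*f$. Taking convex hulls gives $(\partial_Cf)(q)\subset(\partial_S^*f)(q)$.

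\emph{Inclusion $\partial_S^*f\subset\partial_Sf$.} This is an upper semicontinuity statement, and I will prove it by passing to the limit in the concavity inequality, exactly as in the first half of Lemma \ref{Lem: two gradients are same}. Take $v_i\in(\partial_Sf)(q_i)$ with $q_i\to q$ and $v_i\to v$. Here ``$v_i\to v$'' means convergence in $TM$, for instance after parallel transport along short geodesics or in a fixed chart. Since $f$ is concave, $t\mapsto f(\exp_{q_i}(tu))$ is concave for every unit $u$. Its right derivative at $0$ is $\partial_uf(q_i)\le v_i\cdot u$. Hence
$$f(\exp_{q_i}(tu))\le f(q_i)+t\,v_i\cdot u$$
for small $t\ge0$. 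By continuity of $f$ and of $\exp$ in the base point and the vector, we may pass to the limit:
$$f(\exp_q(tu))\le f(q)+t\,v\cdot u.$$
Dividing by $t$ and letting $t\to0^+$ gives $\partial_uf(q)\le v\cdot u$ for all $u$, so $v\in(\partial_Sf)(q)$. Since $(\partial_Sf)(q)$ is convex (an intersection of half-spaces), the convex hull of all such limits also lies in it. Therefore $\partial_S^*f\subset\partial_Sf=\partial_Cf$.

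The only subtle point is making sense of limits of vectors in different tangent spaces and checking that the local Lipschitz bound keeps the $v_i$ bounded. The latter holds because $|v_i|$ is at most the local Lipschitz constant obtained in Lemma \ref{Lem: two gradients are same}. Both points are handled by working in a fixed normal chart around $q$, where the argument reduces to the Euclidean one.
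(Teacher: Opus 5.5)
Your proof is correct, but it reaches the key inclusion $\partial_S^*f\subset\partial_Sf$ by a different route than the paper. The paper works on the Clarke side. By Lemma~\ref{Lem: two gradients are same}, each $v_i\in(\partial_Sf)(q_i)$ is a convex combination of limits of gradients at differentiable points near $q_i$. A diagonal extraction then shows that $v=\lim v_i$ is again such a combination at $q$; in other words, the paper uses the upper semicontinuity of the Clarke differential. Made rigorous, that diagonal step needs two further ingredients: Carath\'eodory's theorem, to keep the number of terms bounded by $n+1$ so that coefficients and gradients can be extracted convergently, and the local Lipschitz bound on the gradients.

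You instead prove the closed-graph property of $\partial_Sf$ directly. You pass to the limit in the supergradient inequality $f(\exp_{q_i}(tu_i))\le f(q_i)+t\,v_i\cdot u_i$, which is the same device the paper uses for $\partial_Cf\subset\partial_Sf$ inside the lemma. Convexity of $(\partial_Sf)(q)$ then absorbs the convex hull, and $\partial_Cf$ enters only through the final identification $\partial_Sf=\partial_Cf$. Your version is self-contained and avoids Carath\'eodory and the diagonal bookkeeping, at the cost of a few more lines. The paper's version is shorter but leans entirely on the lemma's characterization.

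Two small simplifications are available. First, the inclusion $(\partial_Sf)(q)\subset(\partial_S^*f)(q)$ is immediate from the constant sequence $q_i=q$, so your gradient argument for $\partial_Cf\subset\partial_S^*f$ is not needed. In any case, $\nabla_gf(q_i)\in(\partial_Sf)(q_i)$ at a differentiable point follows straight from the definition, since $\partial_uf(q_i)=\nabla_gf(q_i)\cdot u$. Second, the boundedness of the $v_i$ plays no role, because the definition of $\partial_S^*f$ only involves sequences that already converge.
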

\begin{proof}
    By Lemma \ref{Lem: two gradients are same}, we know that $\partial_S f$ consists of convex combination of the limits of $\nabla_g f$ at points in $\mathcal D_f$. Since $\partial_S^* f$ is obtained by taking the convex hull of a further limit from $\partial_S f$, a diagonal argument yields $\partial_S^* f = \partial_S f$.
\end{proof}

We have to use the following characterization result established by Bangert \cite{Ban82} concerning the derivative $\partial_S^*f$. For convenience, we say that $c$ is a {\it regular value} of $f$ if 
$$0\notin (\partial_S^*f)(q)$$ 
for all $q\in f^{-1}(c)$.

\begin{lemma}[\cite{Ban82}] \label{Lem: regular value to positive reach}
    Let $f$ be a semi-concave function on $\Omega$. Then the following statements are equivalent:
    \begin{itemize}
        \item the reach of $E=\{f\geq c\}$ in $\Omega$ satisfies $$\mathfrak r_{\Omega}(E,p)>0$$  for all $p\in E$;
        \item $c$ is a regular value of $f$.
    \end{itemize}
\end{lemma}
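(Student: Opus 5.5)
The plan is to prove the two implications separately, working locally near a point $p\in\partial E$ with $f(p)=c$. Note that $\partial_S^*f=\partial_Sf=\partial_Cf$ by Corollary \ref{Cor: closure once more}, so "regular value" can be read through whichever of the three is most convenient. By semi-concavity we fix a small geodesic ball $B$ around $p$ and a smooth $h$ with $f+h$ concave on $B$. This lets us use the one-sided Taylor bound
$$f(\exp_q(v))\le f(q)+w\cdot v+C|v|^2\quad\mbox{for all } w\in(\partial_Sf)(q),\ q\in B,$$
which is the basic tool in both directions.

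\emph{Regular value $\Rightarrow$ positive reach.} Assume $0\notin(\partial_Sf)(q)$ for $q\in f^{-1}(c)$. The set $\partial_S f$ is upper semicontinuous in $q$: limits of super-differentials are contained in $\partial_S^*f=\partial_Sf$. Hence there are a neighborhood $B'$ of $p$, a constant $\epsilon>0$ and a continuous unit vector field $e$ with $w\cdot e\ge\epsilon$ for all $w\in(\partial_Sf)(q)$ and $q\in B'$. This is obtained by a partition of unity applied to separating hyperplanes for the compact convex sets $\partial_Sf(q)$.

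Now suppose two nearest points $q_1\ne q_2\in E$ to some $x$ close to $p$. I would show they cannot both exist when $d_E(x)$ is small. Each $q_j\in\partial E$ satisfies $f(q_j)=c$, and the minimizing direction $v_j$ from $q_j$ to $x$ is an outer normal. Using the Taylor bound along $v_j$, together with $f\ge c$ on $E$ and $f<c$ off $E$ near $\partial E$, I would show that $v_j$ is close to the cone $-\mathbb R_+\partial_Sf(q_j)$. This yields an estimate of the form
$$g(v_j,\exp_{q_j}^{-1}y)\le C\,\dist(q_j,y)^2\quad\mbox{for } y\in E\mbox{ near }q_j,$$
a Federer-type "$C^{1,1}$ normal" inequality. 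Applying it with $y=q_{3-j}$ and combining with $\dist(x,q_1)=\dist(x,q_2)=d_E(x)$ gives a contradiction when $d_E(x)<1/(2C)$, by a comparison triangle computation in normal coordinates. The positive lower bound on the reach is therefore uniform near $p$.

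\emph{Positive reach $\Rightarrow$ regular value.} Conversely, suppose $0\in(\partial_Sf)(p)$ for some $p$ with $f(p)=c$. Then the Taylor bound with $w=0$ gives $f(\exp_p v)\le c+C|v|^2$. I would combine this with concavity of $f+h$ to show that either $p$ is an isolated or cusp-like point of $E$, or $E$ has an inward corner at $p$. In either case the tangent cone $\mathcal T_pE$ fails to be convex or $\mathcal N_pE$ degenerates. By Lemma \ref{Lem: nearest point preserving along normal direction}, every unit normal yields a normal segment with projection $p$, and a positive-reach set must have a convex tangent cone (Federer). The aim is to produce points arbitrarily close to $p$ with two nearest points. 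Concretely, one picks $w_1,w_2\in\partial_Sf(p)$ with $0$ in their convex hull, or uses $0$ itself, and exhibits two boundary branches on which $f=c$ that meet at an angle.

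\emph{Main obstacle.} I expect the main obstacle to be the second direction. A critical value of a semi-concave function need not visibly produce a non-convex tangent cone; a degenerate maximum with $E=\{p\}$ even has positive reach. So the local statement as written must be interpreted carefully, e.g.\ for $c$ with $\{f\ge c\}$ having nonempty interior near $p$, or following Bangert's precise formulation. In the application only the first direction is used, together with Lemma \ref{Lem: Sard}, to get $F_s$ of positive reach. I would therefore prove the first direction in full and cite \cite{Ban82} for the converse.
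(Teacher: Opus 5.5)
The paper gives no argument beyond the citation to \cite{Ban82}. Your suspicion about the converse is correct, and your degenerate-maximum example already refutes it. It should be stated as a conclusion, not a hedge, and neither of your proposed remedies works. The implication from positive reach of $\{f\ge c\}$ to regularity of $c$ fails for a given $f$ even when $\{f\ge c\}$ has nonempty interior. For instance, $f(x,y)=y^3$ is smooth, hence semi-concave, and $\{f\ge 0\}$ is a half-plane of infinite reach, yet $(\partial_Sf)(q)=\{0\}$ at every point of $\{y=0\}$. Since the statement is false, no citation can supply it. A characterization of sets of positive reach can only assert that \emph{some} suitable defining function exists; it cannot say that every semi-concave $f$ with $\{f\ge c\}$ of positive reach has $c$ regular. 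The right fix is to state the lemma as the one-way implication ``$c$ regular $\Rightarrow$ positive reach'', which is the only direction used in Lemma~\ref{Lem: positive reach modification}. You should also drop your attempt to manufacture points with two nearest points from $0\in(\partial_Sf)(p)$.

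For the forward direction, your route through Federer's quadratic normal inequality is a legitimate self-contained replacement for the citation, but one step must be exact rather than approximate. You need $\mathcal N_qE\subseteq-\mathbb R_+(\partial_Sf)(q)$ for $q\in\partial E$ near $p$. If you only know that the nearest-point direction lies within angle $\theta$ of this cone, your inequality picks up an extra term $\theta\,\dist(q,y)$, and the two-nearest-point contradiction no longer closes. The exact inclusion follows from the formula $\partial_uf(q)=\min_{w\in(\partial_Sf)(q)}w\cdot u$, valid for semi-concave $f$. Every $u$ with $w\cdot u>0$ for all $w\in(\partial_Sf)(q)$ satisfies $f(\exp_q(tu))>c$ for small $t>0$, so it lies in $\mathcal T_qE$. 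This open cone is nonempty because $0\notin(\partial_Sf)(q)$, and taking polars gives the inclusion; the cone over the compact convex set $-(\partial_Sf)(q)$ is closed since it avoids $0$. Now write a unit normal as $v=-w/|w|$ and combine your Taylor bound with $f(y)\ge c=f(q)$. This gives $g(v,\exp_q^{-1}y)\le(C/\epsilon)\dist(q,y)^2$, where $\epsilon$ is the uniform lower bound for $|w|$ near $p$ supplied by upper semicontinuity; that bound is all the vector field $e$ is needed for. In the final step, make sure the curvature errors are of order $d_E(x)\dist(q_1,q_2)^2$. In normal coordinates centred at $x$, the Gauss lemma gives $g(v_1,\exp_{q_1}^{-1}q_2)=\dist(q_1,q_2)^2/(2d_E(x))+O(d_E(x)\dist(q_1,q_2)^2)$, which forces $q_1=q_2$ once $d_E(x)$ is small. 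A cruder metric comparison with errors $O(d_E(x)^4)$ only yields $\dist(q_1,q_2)=O(d_E(x)^2)$.
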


We are ready to establish the following modification lemma.
\begin{lemma}\label{Lem: positive reach modification}
   Let $\Omega\subset M$ be an open subset and $\Sigma$ be a union of some of the connected components of $\partial\Omega$. Assume that $\Sigma$ is compact and quasi-regular relative to $\Omega$. Then there is a sequence of positive constants $s_i\to 0$ such that the set $$F_{s_i}:=\{q\in\Omega:\dist(q,\Sigma)\geq s_i\}$$
    has positive reach in $\Omega$.
\end{lemma}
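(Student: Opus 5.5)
The strategy is to present $F_s$ as a superlevel set $\{f\geq c\}$ of a semi-concave function and then apply Bangert's characterization, Lemma \ref{Lem: regular value to positive reach}, with the regular-value hypothesis supplied by the Sard-type Lemma \ref{Lem: Sard}.

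\emph{Setting up the function.} Take $f:=d_\Sigma$ on the open set $\Omega':=d_\Sigma^{-1}((0,s_*))\subset\Omega$. For $s\in(0,s_*)$ we then have $F_s\cap\Omega'=\{f\geq s\}$. Near $\Sigma_s$ the set $F_s$ coincides with this superlevel set. Deep inside $\Omega$, points have distance at least $s$ from $\partial F_s$, so the reach there is trivially positive; this is the same localization to boundary points as in Remark \ref{Rmk: boundary determine reach}.

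\emph{Semi-concavity.} The proof of Lemma \ref{Lem: Sard} shows that on $\Omega'$
\[
d_\Sigma=\dist(\cdot,\mathbf\Lambda(\Omega,\Sigma)).
\]
Every nearest point is attained along a minimizing geodesic starting at a point of $\mathbf\Lambda(\Omega,\Sigma)$, and $\mathbf\Lambda(\Omega,\Sigma)$ is a smooth hypersurface by quasi-regularity. So, near any $p\in\Omega'$, $d_\Sigma$ is an infimum of functions $\dist(\cdot,\mathbf\Lambda\cap B)$ over small patches $B$. Each of these is locally a minimum of distance functions to points $q$ at distance bounded below from $p$. Distance functions to points away from the cut locus are smooth with Hessians bounded uniformly in terms of local curvature bounds. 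At cut points one still has support functions from above: take the distance to a point slightly along the minimizing geodesic, which has a uniformly bounded Hessian.

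Hence there is a single smooth $h$, for instance $C\,\dist(\cdot,p)^2$, such that $d_\Sigma+h$ is an infimum of concave functions, and therefore concave near $p$. This gives semi-concavity. The uniformity of these Hessian bounds near $\Sigma$ is where the compactness of $\Sigma$ enters, together with $s>0$. We work on $\Omega'$, away from $\Sigma$, so the distance to $p$ stays bounded below.

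\emph{Regular values and conclusion.} By Lemma \ref{Lem: Sard}, for almost every $s\in(0,s_*)$ we have $0\notin(\partial_Cd_\Sigma)(q)$ for all $q\in\Sigma_s$. Note that $f^{-1}(s)\subset\Sigma_s$: if $f(q)=s$ and $q$ were interior to $F_s$, then moving along the minimizing geodesic toward $\Sigma$ would decrease $d_\Sigma$ below $s$, a contradiction. By Corollary \ref{Cor: closure once more}, $\partial_S^*f=\partial_Cf$, so every such $s$ is a regular value of $f$. Lemma \ref{Lem: regular value to positive reach} then gives $\mathfrak r_\Omega(F_s,p)>0$ for all $p\in F_s\cap\Omega'$.

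To upgrade this pointwise positivity to a positive reach of $F_s$ in $\Omega$, we use that $\Sigma_s$ is compact in $\Omega$ (since $s<s_*$). The next step is lower semicontinuity of $p\mapsto\mathfrak r(F_s,p)$, which holds because $B_r(p)\subset\mathcal U_{F_s}$ forces $B_{r-\epsilon}(p')\subset\mathcal U_{F_s}$ for $p'$ close to $p$. Together with Remark \ref{Rmk: boundary determine reach}, this yields a positive infimum. Choosing any sequence $s_i\to0$ from the full-measure set of good values completes the proof.

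\emph{Main obstacle.} The delicate point is the semi-concavity of $d_\Sigma$ with a uniform smooth correction. This is the standard semiconcavity of distance functions to closed sets away from the set, adapted to the Riemannian setting. I would prove it via the support-function argument above, using compactness of $\Sigma_s$ and positive injectivity radius on a compact neighborhood.
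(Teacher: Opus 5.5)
Your proposal is correct and follows the paper's proof essentially step for step. The chain is the same: a Sard-type choice of $s$ via Lemma~\ref{Lem: Sard}; semi-concavity of $d_\Sigma$ via upper support functions $d_\Sigma(q_*)+\dist(\cdot,q_*)$, with $q_*$ a short distance along a minimizing geodesic to $\Sigma$; Corollary~\ref{Cor: closure once more} and Bangert's Lemma~\ref{Lem: regular value to positive reach} for pointwise positive reach; and the (in fact $1$-Lipschitz) continuity of $p\mapsto\mathfrak r_\Omega(F_s,p)$ on the compact set $\Sigma_s$ for the uniform bound.

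Two minor remarks. First, the correction term should be $h=-C\,\dist(\cdot,p)^2$, a negative multiple. Second, the detour through $d_\Sigma=\dist(\cdot,\mathbf\Lambda(\Omega,\Sigma))$ is not needed for semi-concavity, since the support-function argument works for the distance to any closed set away from that set; quasi-regularity is used only in the Sard step.
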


\begin{proof}
By Lemma \ref{Lem: Sard}, we can choose a sequence of constants $s_i \to 0$ such that $$0 \notin (\partial_C d_\Sigma)(q)$$ for every $q \in d_\Sigma^{-1}(s_i)$. In the following, we are going to verify that $d_\Sigma$ is semi-concave on $\Omega$. Consequently, every $s_i$ is a regular value of $d_\Sigma$ by Corollary \ref{Cor: closure once more}, and then Lemma \ref{Lem: regular value to positive reach}  implies $$\mathfrak r_{\Omega}(F_{s_i}, p) > 0$$ for all $p \in F_{s_i}$.
    
    Fix a point $q \in \Omega$ and a small constant $\varepsilon > 0$ such that $$\operatorname{dist}(q, \Sigma) > 3\varepsilon.$$ Let $B_q$ be the geodesic ball of radius $\varepsilon$ centered at $q$. By the classical Hessian comparison theorem, there is a universal constant $C$ such that we have
    $$\nabla^2 \dist(\cdot,q_*)\leq Cg\mbox{ on }\partial B_\varepsilon(q_*)$$
    for all $q_*$ in the geodesic ball of radius $2\varepsilon$ centered at $q$. With the constant $C$ determined,  we may take the function $h$ to be a sufficiently large (negative) multiple of the squared distance to $q$ such that
    $$\nabla_g^2 h \le -C g\mbox{ in }B_q.$$
    
    We claim that the function $$f := d_\Sigma + h$$ is concave on $B_q$. Otherwise, we can find a geodesic
    $$\gamma:[0,1]\to B_q$$
   and a small positive constant $\delta$ such that
    $$(f\circ\gamma)(t)-(1-t)(f\circ\gamma)(0)-t(f\circ\gamma)(1)+\delta t(1-t)$$
    attains a negative minimum at some point $t_*\in (0,1)$. We take a unit‑speed minimizing geodesic 
    $$\gamma_{*}:[0,l]\to (M,g)$$ 
    connecting $\Sigma$ to $\gamma(t_*)$, and define
    $$f_*(\cdot):=l-\varepsilon+\dist(\cdot,q_*)+h\mbox{ with }q_*=\gamma_*(l-\varepsilon).$$
    From the triangle inequality, we have $f_*\geq f$ with the equality attained at the point $\gamma(t_*)$. In particular, the function
    $$(f_*\circ\gamma)(t)-(1-t)(f_*\circ\gamma)(0)-t(f_*\circ\gamma)(1)+\delta t(1-t)$$
    attains its negative minimum at $t_*$ as well. This implies
    $$(\nabla^2 f_*)(\gamma'(t_*),\gamma'(t_*))\geq 2\delta.$$
    On the other hand, we have
    $$\nabla^2 f_*=\nabla^2\dist(\cdot,q_*)+\nabla^2h\leq 0\mbox{ at }\gamma(t_*),$$
    and this leads to a contradiction.

We have shown that $d_\Sigma$ is a semi-concave function on $\Omega$, and so we have $\mathfrak r_{\Omega}(F_{s_i},p)>0$ for all $p\in F_{s_i}$. By definition, the function
\[
    p\longmapsto r_\Omega(F_{s_i},p)
\]
is $1$-Lipschitz. Since $\Sigma_{s_i}$ is compact in $\Omega$ for $s_i$ small enough, It follows from Remark \ref{Rmk: boundary determine reach} that we have
$$\mathfrak r_\Omega(F_{s_i})=\inf_{p\in\partial F_{s_i}}\mathfrak r_\Omega(F_{s_i},p)>0.$$
   That is, $F_{s_i}$ is a set with positive reach in $\Omega$.
\end{proof}

%\begin{corollary}
  %  For each $i$ there is a constant $\tau_i>0$ such that $\Sigma_{i,\tau}$ is a $C^{1,1}$-hypersurface for all $\tau \in (0,\tau_i)$.
%\end{corollary}
%\begin{proof}
  %  Since $\partial F_{s_i}\cap \Omega$ is compact, we have
   % $$\mathfrak i_\Omega(\partial F_{s_i}):=\inf_{q\in \partial F_{s_i}}\inj(\Omega,g,q)>0.$$
   % The desired consequence comes from Lemma \ref{Lem: level set regularity}.
%\end{proof}

\subsection{Mean-convex smoothing}
To conduct the mean-convex smoothing, we have to use the barriers in the viscosity sense first introduced by Ilmanen \cite{Ilm}.

\begin{definition}
Let $U \subset M$ and $\Gamma \subset U$. We say that $\Gamma$ is a {\it $c$-barrier} in $U$ if for every open set $D \subset U \setminus \Gamma$ such that $\partial D$ is smooth around $\partial D\cap \Gamma$, the mean curvature of $\partial D$ satisfies $H_{\partial D} \ge c$ along $\partial D \cap \Gamma$ with respect to the outward-pointing unit normal, where we call $D$ a {\it test region} of $(\Gamma,U)$ for simplicity. Moreover, we say that $\Gamma$ is a {\it strict $c$-barrier} in $U$ if $\Gamma$ is a $c'$-barrier in $U$ with $c'>c$.
\end{definition}

As before, let $\Omega\subset M$ be an open subset and $\Sigma$ be a union of some of the connected components of $\partial\Omega$, which is compact and quasi-regular relative to $\Omega$. Assume further that the mean curvature of the hypersurface $\mathbf\Lambda(\Omega, \Sigma)$ with respect to the outward-pointing unit normal satisfies $$H \geq \delta$$ for some constant $\delta > 0$.

\begin{lemma}\label{Lem: barrier Sigma}
    $\Sigma$ is a $\delta$-barrier in $\Omega\cup \Sigma$.
\end{lemma}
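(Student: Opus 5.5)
Take a test region $D\subset(\Omega\cup\Sigma)\setminus\Sigma=\Omega$ with $\partial D$ smooth near $\partial D\cap\Sigma$. Fix a point $x\in\partial D\cap\Sigma$. We must show $H_{\partial D}(x)\ge\delta$ for the outward normal of $D$. The plan is in three steps: first show that $x\in\mathbf\Lambda(\Omega,\Sigma)$; then show that $\partial D$ is tangent at $x$ to the smooth hypersurface $\mathbf\Lambda(\Omega,\Sigma)$ and lies on the $\Omega$-side of it; finally conclude by the maximum principle comparison of mean curvatures.

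\textbf{Step 1: $x\in\mathbf\Lambda(\Omega,\Sigma)$.} Since $\partial D$ is smooth near $x$ and $D\subset\Omega$, there is an interior tangent ball at $x$. Let $\nu$ be the inward normal of $\partial D$ at $x$, and let $B=B_r(\exp_x(r\nu))$ for small $r$, so that $B\subset D$ and $\bar B\cap\partial D=\{x\}$ locally. Then $B\subset\Omega$, hence $B\cap\Sigma=\emptyset$.

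Set $y=\exp_x(r\nu)$. Every point of $\Sigma$ has distance at least $r$ from $y$, and $x$ attains distance exactly $r$, so $\dist(y,\Sigma)=r$. The geodesic $\gamma(t)=\exp_y(t\cdot(-\text{direction to }x))$ for $t\in[0,r]$ is minimizing for small $r$, ends at $x$, and satisfies $\gamma([0,r))\subset B\subset\Omega$. Hence $x\in\mathbf\Lambda_r(\Omega,\Sigma)$.

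\textbf{Step 2: tangency and one-sidedness.} By quasi-regularity, $\mathbf\Lambda:=\mathbf\Lambda(\Omega,\Sigma)$ is a smooth hypersurface, and $\partial D$ touches $\Sigma$ at $x$. Near $x$ we know $\mathbf\Lambda\subset\Sigma$ and $\bar D\cap\Sigma\ni x$. In the coordinates used below, $D\subset\Omega$ and $\Sigma\cap D=\emptyset$.

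The point requiring care is showing that $\mathbf\Lambda$ separates locally, with $D$ on the $\Omega$-side. I would argue as follows. The ball $B$ lies in $\Omega$ and touches $\mathbf\Lambda$ at $x$, so $\mathbf\Lambda$ is tangent to $\partial B$ at $x$. Consequently $\mathbf\Lambda$ is tangent to $\partial D$ at $x$, and $\nu$ is the normal of $\mathbf\Lambda$ pointing into $\Omega$, i.e.\ minus the outward normal used in the hypothesis $H\ge\delta$.

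Next, write $\mathbf\Lambda$ near $x$ as a graph $u$ over $T_x\mathbf\Lambda$ in a chart, and write $\partial D$ as a graph $w$ over the same plane, with the $\nu$-direction positive. I would show that $D$ lies locally in $\{x_n>u\}$, i.e.\ $w\ge u$ near $0$ with equality at $0$. Suppose instead that $w(z)<u(z)$ for some small $z$. I would then take the nearest-point projection to $\Sigma$ from points of $D$ near $x$, and try to show that these projections hit $\mathbf\Lambda$ through $\Omega$, so they lie in $\Sigma$.

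It is simpler to argue directly from points of $D$ just above $\partial D$. Any such point $p$ close to $x$ lies in $\Omega$ and has a nearest point on $\Sigma$. That nearest point lies in $\mathbf\Lambda$, by the argument in the proof of Lemma \ref{Lem: Sard}, and it is close to $x$. Now consider the vertical segment above $z$ between the graphs $w$ and $u$. This segment lies in $D\subset\Omega$ yet must cross $\mathbf\Lambda\subset\Sigma$, contradicting $D\cap\Sigma=\emptyset$. Making this crossing rigorous, using the fact that $\mathbf\Lambda$ is a smooth embedded graph near $x$ while $D$ is connected to $B$ locally, is where I expect the main obstacle. If needed, I would restrict to the connected component of $D$ near $x$ containing $B$.

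\textbf{Step 3: comparison.} Since $w\ge u$ with equality and zero gradient at $0$, we get $\nabla^2 w(0)\ge\nabla^2 u(0)$. The mean curvatures with respect to the normal pointing away from the $\Omega$-side (the outward normal of $D$, which agrees at $x$ with the outward normal for $\mathbf\Lambda$) therefore satisfy
\[
H_{\partial D}(x)\ge H_{\mathbf\Lambda}(x)\ge\delta .
\]
This gives the $\delta$-barrier property.
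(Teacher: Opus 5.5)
Your proposal is correct and follows the paper's route: you use an interior tangent ball to show that $x\in\mathbf\Lambda(\Omega,\Sigma)$, and then a maximum-principle comparison gives $H_{\partial D}(x)\ge H_{\mathbf\Lambda(\Omega,\Sigma)}(x)\ge\delta$. The one-sidedness you flag as the main obstacle in Step 2 is immediate, and the paper leaves it implicit: if $w(z)<u(z)$, then the point $(z,u(z))\in\mathbf\Lambda\subset\Sigma$ lies in $\{x_n>w\}$, which coincides with $D$ near $x$, contradicting $D\cap\Sigma=\emptyset$; so the nearest-point-projection detour is unnecessary.
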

\begin{proof}
    Take $D$ to be any test region of $(\Sigma,\Omega\cup\Sigma)$. Since $D$ is smooth around $\partial D\cap \Sigma$, we have $p\in \mathbf\Lambda(D,\partial D)$ for all $p\in \partial D\cap \Sigma$, and so $p\in \mathbf\Lambda(\Omega,\Sigma)$. From the maximum principle we see $$H_{\partial D}(p)\geq H_{\mathbf\Lambda(\Omega,\Sigma)}(p)\geq \delta.$$
    This means that $\Sigma$ is a $\delta$-barrier in $\Omega\cup \Sigma$.
\end{proof}

\begin{lemma}\label{Lem: barrier F_s}
    For any constant $0<\delta'<\delta$ the boundary $\partial F_{s_i}$ is a $\delta'$-barrier in $F_{s_i}$ for $i$ large enough.
\end{lemma}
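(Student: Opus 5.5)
Given a test region $D\subset F_{s_i}\setminus\partial F_{s_i}$ whose boundary is smooth near a contact point $p\in\partial D\cap\partial F_{s_i}$, I will push $\partial D$ outward by distance $s_i$ toward $\Sigma$ and compare the result with $\Sigma$ via Lemma \ref{Lem: barrier Sigma}. Concretely, since $d_\Sigma(p)=s_i$, choose a nearest point $x\in\Sigma$ to $p$ and a unit-speed minimizing geodesic $\gamma:[0,s_i]\to M$ from $p$ to $x$. As $s_i<s_*$, the argument in the proof of Lemma \ref{Lem: Sard} shows $\gamma([0,s_i))\subset\Omega$ and $x\in\mathbf\Lambda(\Omega,\Sigma)$. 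By Lemma \ref{Lem: barrier Sigma}, $\Sigma$ is a $\delta$-barrier with mean curvature at least $\delta$ there.

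\textbf{Step 1: $\gamma'(0)$ is the outward normal of $D$ at $p$.} Because $D\subset F_{s_i}$, we have $d_\Sigma\ge s_i$ on $\bar D$, with equality at $p$. Since $\dist(\cdot,x)\ge d_\Sigma$, the smooth (near $p$) function $\dist(\cdot,x)$ attains its minimum over $\bar D$ at $p$. Hence its gradient $-\gamma'(0)$ is an inner normal to $\partial D$ at $p$, i.e.\ $\gamma'(0)=\nu_{\partial D}(p)$ is the outward normal.

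\textbf{Step 2: Build a test region for $\Sigma$.} Consider the local equidistant hypersurface $\Gamma_t=\{\exp_y(t\nu(y)) : y\in\partial D \text{ near } p\}$ for $t\in[0,s_i]$. Since $\gamma$ is minimizing and $p$ is interior to it after shortening, there are no focal points before $x$ after a slight perturbation. To make this rigorous, I would first replace $D$ by a smaller region $D'\subset D$ whose boundary touches $\partial D$ only at $p$ and is strictly more curved, e.g.\ by shaving $D$ with a small ball-type perturbation, changing $H$ by at most $\epsilon$. The set $\{q:\dist(q,\bar D')<s_i\}$ is then, near $x$, bounded by $\Gamma_{s_i}$. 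Moreover, by the triangle inequality it is disjoint from $\Sigma$: any point of $\Sigma$ at distance $<s_i$ from $D'$ would force $d_\Sigma<s_i$ on $D'\subset F_{s_i}$. Thus this set is a test region of $(\Sigma,\Omega\cup\Sigma)$ touching $\Sigma$ at $x$, provided $\Gamma_{s_i}$ is smooth at $x$. Smoothness holds when $x$ is not focal along $\gamma$ for $\partial D'$. This is ensured because the equidistant set of $\partial D'$ lies inside the one-sided region, and because $x$ admits the interior ball from $\mathbf\Lambda$. Alternatively, I could stop at $\gamma(s_i-\eta)$ and use the ball $B_\eta$ touching $\Sigma$ at $x$; this is the step I expect to be the main obstacle, and I would try the ball-at-the-end trick first.

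\textbf{Step 3: Riccati comparison.} Lemma \ref{Lem: barrier Sigma} gives $H_{\Gamma_{s_i}}(x)\ge\delta$. Along $\gamma$, the Riccati equation $H'=-|A|^2-\Ric(\gamma',\gamma')\le -\Ric(\gamma',\gamma')$ (with our sign convention for outward pushing) yields
$$H_{\partial D'}(p)\ge H_{\Gamma_{s_i}}(x)-\int_0^{s_i}|\Ric|\,dt\ \ge\ \delta-Cs_i .$$
Here $C$ bounds $|\Ric|$ on a compact neighborhood of $\Sigma$; note the $-|A|^2$ term has the favorable sign. Letting $\epsilon\to0$ gives $H_{\partial D}(p)\ge\delta-Cs_i-\epsilon$. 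For $i$ large, $Cs_i<\delta-\delta'$, so $\partial F_{s_i}$ is a $\delta'$-barrier in $F_{s_i}$.
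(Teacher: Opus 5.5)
Your proposal is correct and follows the paper's route. You push $\partial D$ outward by $s_i$, note that the resulting neighborhood is a test region of $(\Sigma,\Omega\cup\Sigma)$ touching $\Sigma$ at the foot point, invoke Lemma~\ref{Lem: barrier Sigma}, and lose at most $Cs_i$ through the Riccati equation. Your Step~1 and your shrinking to $D'$ also make explicit the uniqueness of the foot point and of the contact set, which the paper only asserts.

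The focal-point step is left informal in both your proposal and the paper. It closes cleanly if you compare at an interior point $\gamma(t)$, $0<t<s_i$: there the $t$-parallel hypersurface of $\partial D'$ and the $(s_i-t)$-parallel hypersurface of $\mathbf\Lambda(\Omega,\Sigma)$ are both smooth, since $\gamma$ minimizes distance in both directions. You then apply the Riccati equation on each side, using $H_{\mathbf\Lambda(\Omega,\Sigma)}\geq\delta$ directly.
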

\begin{proof}
    Suppose not, then for any $i$ we can find a test region $D$ of $(\partial F_{s_i},F_{s_i})$ such that $H_{\partial D}(p)<\delta'$ for some $p\in\partial D\cap \partial F_{s_i}$. By intersecting with a small geodesic ball centered at $p$, we can guarantee
    $$\dist(D,\partial\Omega\setminus \Sigma)>s_i.$$
    In particular, there is a unique point $p_*\in \mathbf \Lambda_{s_i}(\Omega,\Sigma)$ such that $\dist(p,p_*)=s_i$.
    Denote
    $$D_{s_i}=\{q\in\Omega:\dist(q,D)< s_i\}.$$
    Then we must have $p_*\in\partial D_{s_i}\cap \Sigma$. If $p_*$ is not a focal point of $\partial D$, then $\partial D_{s_i}$ is smooth around $p_*$ and can be modified to be a test region of $(\Sigma,\Omega\cup\Sigma)$. It follows from the Riccati equation that
    $$H_{\partial D_{s_i}}(p_*)\leq H_{\partial D}(p)-C_*s_i,$$
    where $C_*$ denotes the lower bound of the Ricci curvature in $s_*$-neighborhood of $\Sigma$. For $i$ large enough, $s_i$ is sufficiently small and so we have 
    $$H_{\partial D_{s_i}}(p_*)<\delta,$$ which contradicts to Lemma \ref{Lem: barrier Sigma}. If $p_*$ is a focal point of $\partial D$, then we can slightly modify $D$ to make the previous argument still work.
\end{proof}

Denote
$$\Omega_{i,\tau}=\{q\in \Omega:\dist(q,F_{s_i})< \tau\}$$
and 
$$\Sigma_{i,\tau}=\partial \Omega_{i,\tau}\cap \Omega.$$

The following lemma gives us the first regularity improvement from sets with positive reach.
\begin{lemma}\label{Lem: C11 hypersurface}
Denote
$$\tau_*=\min\{\mathfrak r_\Omega(F_{s_i}),\mathfrak i_\Omega(\partial F_{s_i})\}>0,$$
where $\mathfrak i_{\Omega}(\partial F_{s_i})$ denotes the injective radius infimum of the region $(\Omega,g)$ along $\partial F_{s_i}$.
Then $\Sigma_{i,\tau}$ is a $C^{1,1}$-hypersurface for all $\tau\in (0,\tau_*)$. Moreover, the second fundamental form of $\Sigma_{i,\tau}$ with respect to the outward-pointing unit normal is uniformly bounded from below if $\tau$ has a gap to $\tau_*$, and is uniformly bounded from above if $\tau$ has a gap to $0$.
\end{lemma}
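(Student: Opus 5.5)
The plan is to view $\Sigma_{i,\tau}$ as the level set $\{d_{F_{s_i}}=\tau\}$, with $E:=F_{s_i}$, and to show that through every point $p\in\Sigma_{i,\tau}$ there pass two smooth geodesic spheres. One lies on the side of $\Omega_{i,\tau}$ and has radius comparable to $\tau$. The other lies on the complementary side and has radius comparable to $\tau_*-\tau$. Two-sided ball conditions with uniform radii give $C^{1,1}$ regularity, and the radii give the curvature bounds. The orientation matters here. The outward-pointing unit normal of $\Omega_{i,\tau}$ at $p$ is $V(p)$, the continuous field from \eqref{Eq: continuous vector field}. An interior ball therefore bounds the second fundamental form from above, and an exterior ball bounds it from below.

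\emph{Step 1: $d_E$ is $C^1$ on $\mathfrak A$, and $\Sigma_{i,\tau}$ is a $C^1$ hypersurface.} Fix $p\in\mathfrak A$ and let $(q_p,v_p)$ be the pair from Lemma \ref{Lem: unique pair}, so that $d_E(p)=\tau$.

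For an upper barrier, I use that $\dist(\cdot,q_p)$ is smooth near $p$, since $\tau<\mathfrak i(\partial E)$. It satisfies $\dist(\cdot,q_p)\ge d_E$, with equality at $p$.

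For a lower barrier, I pick any $\tau'\in(\tau,r_*)$ and set $p'=\exp_{q_p}(\tau' v_p)$. Lemma \ref{Lem: nearest point preserving along normal direction} (or Corollary \ref{Cor: nearest point preserving along geodesic}) gives $d_E(p')=\tau'$. The triangle inequality then yields $d_E\ge \tau'-\dist(\cdot,p')$, with equality at $p$. The function $\tau'-\dist(\cdot,p')$ is smooth near $p$.

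The two barriers are smooth near $p$, touch $d_E$ at $p$ from above and from below, and have the same gradient $V(p)$. Hence $d_E$ is differentiable at $p$ with $\nabla d_E(p)=V(p)$. Since $V$ is continuous and of unit length, $d_E\in C^1(\mathfrak A)$, and the implicit function theorem makes $\Sigma_{i,\tau}=d_E^{-1}(\tau)$ a $C^1$ hypersurface with outward unit normal $V$ for every $\tau\in(0,\tau_*)$. Here I use $\tau_*\le r_*$, and I use the compactness of $\partial E\cap\Omega$ together with $s_i<s_*$ to stay inside $\Omega$.

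\emph{Step 2: upper bound from the interior ball.} The ball $B_\tau(q_p)$ lies in $\{d_E<\tau\}=\Omega_{i,\tau}$, and its boundary sphere passes through $p$. Thus $\Sigma_{i,\tau}$ is touched at $p$ from inside by a geodesic sphere of radius $\tau$, whose outward normal at $p$ is again $V(p)$. By comparison of second fundamental forms at a point of tangency, $\mathrm{II}_{\Sigma_{i,\tau}}(p)\le \mathrm{II}_{\partial B_\tau(q_p)}(p)$. The Hessian comparison theorem on a compact neighborhood of $\partial E$ bounds the right-hand side by $C(1/\tau+1)$. This is uniform whenever $\tau\ge\tau_0>0$.

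\emph{Step 3: lower bound from the exterior ball.} Choose $\tau'=(\tau+\tau_*)/2$ and $p'$ as in Step 1. Then $B_{\tau'-\tau}(p')\subset\{d_E>\tau\}$, which is the complement of $\overline{\Omega}_{i,\tau}$, and its boundary touches $\Sigma_{i,\tau}$ at $p$. Relative to the outward normal $V(p)$, this sphere curves away. Comparison gives $\mathrm{II}_{\Sigma_{i,\tau}}(p)\ge -C\bigl(1/(\tau'-\tau)+1\bigr)=-C\bigl(2/(\tau_*-\tau)+1\bigr)$, which is uniform whenever $\tau\le\tau_*-\tau_0$.

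\emph{Step 4: $C^{1,1}$ regularity.} Combining Steps 2 and 3, for $\tau$ in a compact subinterval of $(0,\tau_*)$ the hypersurface $\Sigma_{i,\tau}$ satisfies uniform two-sided ball conditions with radii bounded below. Writing $\Sigma_{i,\tau}$ locally as a graph in normal coordinates, the graphing function is squeezed between two paraboloids at every point, with uniform constants. This shows that the graphing function's gradient is Lipschitz, so $\Sigma_{i,\tau}$ is $C^{1,1}$. The second fundamental form then exists almost everywhere and in the support (viscosity) sense everywhere, and obeys the stated one-sided bounds.

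The main obstacle is Step 3. It needs the extension property that the geodesic from $q_p$ in direction $v_p$ remains distance-minimizing to $E$ up to length nearly $\tau_*$. This is exactly where positive reach and the injectivity radius enter, through Lemma \ref{Lem: nearest point preserving along normal direction}. A secondary point is that the uniform Hessian comparison constant should be taken on a fixed compact neighborhood of $\partial F_{s_i}\cap\Omega$.
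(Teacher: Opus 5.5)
Your proposal is correct and follows essentially the same route as the paper: at each $p\in\Sigma_{i,\tau}$ you touch $\Sigma_{i,\tau}$ from both sides by geodesic spheres centered on the normal geodesic $t\mapsto\exp_{q_p}(tv_p)$, which stays distance-realizing to $F_{s_i}$ up to $\tau_*$ by Corollary \ref{Cor: nearest point preserving along geodesic}, and then read off $C^{1,1}$ regularity and the one-sided bounds on the second fundamental form from Hessian comparison. The differences are cosmetic. You add an explicit two-barrier argument showing $d_{F_{s_i}}$ is $C^1$ with gradient $V$, and you use the maximal interior ball $B_\tau(q_p)$ and an exterior ball of radius $(\tau_*-\tau)/2$, where the paper uses a single radius $\varepsilon<\min\{\tau,\tau_*-\tau\}$ on both sides.
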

\begin{proof}

For any point $p\in \Sigma_{i,\tau}$ with $0<\tau<\tau_*$, by Lemma \ref{Lem: unique pair} we can find a unique pair $(q_p,v_p)$ such that $p=\exp_{q_p}(\tau v_p)$. Denote $$\gamma(t):=\exp_{q_p}(t v_p)$$ for short. By Corollary \ref{Cor: nearest point preserving along geodesic} we have  
$$\dist(\gamma(t),F_{s_i})=t\mbox{ for all }t\in (0,\tau_*).$$
Fix a constant $\varepsilon$ such that $$0<\tau-\varepsilon<\tau+\varepsilon<\tau_*.$$ Then we know that the geodesic spheres $\partial B_\varepsilon(\gamma(\tau-\varepsilon))$ and $\partial B_\varepsilon(\gamma(\tau+\varepsilon))$ touch $\Sigma_{i,\tau}$ at $p$ from the interior and the exterior of $\Omega_{i,\tau}$, respectively. Since $p$ is arbitrary, $\Sigma_{i,\tau}$ is a $C^{1,1}$-hypersurface from the Hessian comparison theorem. The final statement is a direct consequence from our argument.
\end{proof}

We have the following mean curvature estimate for %\(\partial\Omega_{i,\tau}\).
$\Sigma_{i,\tau}$.

\begin{lemma}\label{Lem: barrier Omega i,tau}
    For any constant \(0<\delta'<\delta\), $\Sigma_{i,\tau}$ is a \(\delta'\)-barrier in
    \(\overline{\Omega}_{i,\tau}\) for \(i\) large enough and \(\tau\) small
    enough.
\end{lemma}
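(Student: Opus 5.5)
We plan to mimic the proof of Lemma \ref{Lem: barrier F_s}, transporting a hypothetical bad test region for $\Sigma_{i,\tau}$ back to $\partial F_{s_i}$ by parallel shrinking. First fix $\delta''$ with $\delta'<\delta''<\delta$. By Lemma \ref{Lem: barrier F_s}, $\partial F_{s_i}$ is a $\delta''$-barrier in $F_{s_i}$ once $i$ is large. Fix such an $i$, and choose $\tau<\tau_*$ so small that $C_*\tau<\delta''-\delta'$. Here $C_*$ is a bound for $|\Ric|$ in the $s_*$-neighborhood of $\Sigma$, as before.

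Argue by contradiction. Suppose $D\subset\overline\Omega_{i,\tau}\setminus\Sigma_{i,\tau}$ is a test region with $H_{\partial D}(p)<\delta'$ at some $p\in\partial D\cap\Sigma_{i,\tau}$. Shrinking $D$ to its intersection with a small ball around $p$, we may assume $D$ lies near $p$ and $\partial D$ is smooth near $p$. By Lemma \ref{Lem: unique pair} and Corollary \ref{Cor: nearest point preserving along geodesic}, $p=\exp_{q_p}(\tau v_p)$ with $q_p\in\partial F_{s_i}$, and the geodesic $\gamma(t)=\exp_{q_p}(tv_p)$ realizes the distance to $F_{s_i}$.

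Now define the inner parallel set
$$D^{-\tau}:=\{x\in D:\dist(x,M\setminus D)>\tau\}.$$
We check that $D^{-\tau}\subset F_{s_i}\setminus\partial F_{s_i}$. Indeed, if $x\in D^{-\tau}$ had $\dist(x,F_{s_i})<\tau$, then $B_\tau(x)\subset D$ would meet $\Sigma_{i,\tau}$, which is impossible. Moreover $q_p\in\partial D^{-\tau}$. This holds because $B_\tau(q_p)$ touches $\Sigma_{i,\tau}$ only at $p$ from inside $\Omega_{i,\tau}$, so it is tangent to $\partial D$ at $p$.

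If $p$ is not a focal point along the inward normal geodesic of $\partial D$, then $\partial D^{-\tau}$ is smooth near $q_p$. After localizing, $D^{-\tau}$ is a test region of $(\partial F_{s_i},F_{s_i})$. The Riccati equation along $\gamma$, run backwards, gives
$$H_{\partial D^{-\tau}}(q_p)\le H_{\partial D}(p)+C_*\tau<\delta''.$$
Here we use that $|A|^2\ge0$ only helps in the direction of decreasing $H$ when moving outward; we will check the sign convention carefully. This contradicts the $\delta''$-barrier property.

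The main obstacle is the focal point case. Smoothness of $\partial D^{-\tau}$ at $q_p$ can fail, and the sign of the $|A|^2$ term must be checked. If it goes the wrong way, we should instead push $\partial F_{s_i}$ outward, as in the proof of Lemma \ref{Lem: barrier F_s}, which gives the favorable inequality $H(\text{outer})\le H(\text{inner})-C\tau$.

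To handle focal points, we first replace $D$ near $p$ by a smaller region. We take $D\cap B_r(\gamma(\tau-r))$ with $r\ll\tau$, or perturb $\partial D$ by a small quadratic bending inward at $p$. This makes the principal curvatures at $p$ bounded, with $\partial D$ strictly inside and touching only at $p$, while changing $H(p)$ by an arbitrarily small amount. Then $p$ is not focal within distance $\tau$ once $\tau$ is small relative to the curvature bound. To get that bound, we use the interior ball condition from Lemma \ref{Lem: C11 hypersurface}: $\Sigma_{i,\tau}$ is touched from inside $\Omega_{i,\tau}$ by spheres of radius comparable to $\tau$. Hence the test surface can be assumed to have second fundamental form bounded by $O(1/\tau)$ only in one direction, and this is exactly what is needed for the parallel surface at distance $\tau$ to remain smooth. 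We would then conclude as in the nonfocal case.
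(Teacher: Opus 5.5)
Your proposal has a genuine gap at the step $q_p\in\partial D^{-\tau}$, with $\partial D^{-\tau}$ smooth there.

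Tangency of $\partial B_\tau(q_p)$ to $\partial D$ at $p$ does not place $B_\tau(q_p)$ inside $D$, because a test region need not have any thickness. For example, $D\cap\{\dist(\cdot,F_{s_i})>\tau/2\}$ is again a test region with the same boundary near $p$, yet its inner $\tau$-parallel set is empty. The same happens after your own localization of $D$ to a ball of radius $<\tau$ about $p$.

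More seriously, suppose $\partial D^{-\tau}$ were smooth at $q_p$ with $D^{-\tau}\subset F_{s_i}$. Then $\mathcal T_{q_p}F_{s_i}$ would contain a half-space, which forces $\mathcal N^1_{q_p}F_{s_i}=\{v_p\}$. So your construction presupposes exactly what the paper has to establish by a case analysis:
\begin{itemize}
\item When $\mathcal N^1_{q_p}F_{s_i}=\{v_p,-v_p\}$, no open set with boundary smooth at $q_p$ fits inside $F_{s_i}$, so Lemma~\ref{Lem: barrier F_s} says nothing there. The paper rules this case out by a Berger-type argument: both $v_p$ and $-v_p$ are initial velocities of minimizing geodesics from $q_p$ to $\Sigma$. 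Hence $0\in(\partial_C d_\Sigma)(q_p)$, contradicting the choice of $s_i$ as a regular value. Your proposal never uses this ingredient.
\item When the cone contains a unit vector $v_*$ not parallel to $v_p$, convexity and Lemma~\ref{Lem: nearest point preserving along normal direction} put the arc $\theta\mapsto\exp_{q_p}\bigl(\tau(v_p+\theta v_*)/|v_p+\theta v_*|\bigr)$ inside $\Sigma_{i,\tau}$. This forces $\partial D$ to have a principal curvature $\ge 1/(2\tau)$ at $p$.
\end{itemize}

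The curvature part of your argument also runs the wrong way. Going inward, the Riccati equation gives $H_{\partial D^{-\tau}}(q_p)=H_{\partial D}(p)+\int_0^\tau\bigl(|A|^2+\Ric(\nu,\nu)\bigr)\,dt$. So $|A|^2$ works against you, and you need a $\tau$-independent bound on $|A|$.

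Your focal-point fix cannot supply that bound. Shrinking $D$, whether by intersecting with $B_r(\gamma(\tau-r))$ or by bending inward, only increases the principal curvatures at $p$. Interior $\tau$-balls bound the curvature of $\Sigma_{i,\tau}$, not of $\partial D$. And a bound of order $1/\tau$ does not lie below the focal threshold $1/\tau$.

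The right input is the lower bound in Lemma~\ref{Lem: C11 hypersurface}, which comes from exterior balls. It gives every principal curvature of $\partial D$ at $p$ at least $-C$, hence at most $\delta'+(n-2)C$ when $H_{\partial D}(p)<\delta'$. This is what disposes of the higher-dimensional cone for small $\tau$ (the paper's Case 3).

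In the remaining case $\mathcal N^1_{q_p}F_{s_i}=\{v_p\}$, the paper goes outward, as in your fallback. The minimizing geodesic from $q_p$ to its nearest point $q_\Sigma\in\Sigma$ must start with velocity $v_p$. So $\partial D_{s_i-\tau}$, where $D_{s_i-\tau}=\{\dist(\cdot,D)<s_i-\tau\}$, touches $\Sigma$ at $q_\Sigma$. The favorable Riccati inequality $H_{\partial D_{s_i-\tau}}(q_\Sigma)<\delta'+C_*(s_i-\tau)<\delta$ then contradicts Lemma~\ref{Lem: barrier Sigma}. This outward step needs the single-normal information to know that the geodesic from $p$ reaches $\Sigma$ at distance $s_i-\tau$, which is why the case split cannot be avoided.
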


\begin{proof}
     By Lemma \ref{Lem: barrier F_s} we can take \(i\) large enough such that
     \(\Sigma_{s_i}\) is a \(\delta''\)-barrier in \(F_{s_i}\) for some fixed
     \[
         \delta'+C_*s_i<\delta''<\delta,
     \]
     where \(C_*\) denotes the Ricci curvature lower bound in the
     \(s_*\)-neighborhood of \(\Sigma\).

     In the following, let us show that %\(\partial \Omega_{i,\tau}\)
    $\Sigma_{i,\tau}$ is a
     \(\delta'\)-barrier in \(\overline\Omega_{i,\tau}\) for \(\tau\) small
     enough. Suppose not, then there is a test region \(D\) of
     \((\Sigma_{i,\tau},\overline\Omega_{i,\tau})\) such that
     \(H_{\partial D}(p)<\delta'\) for some
     \(p\in \partial D\cap \Sigma_{i,\tau}\). Since \(F_{s_i}\) has
     positive reach, \(p\) has a unique nearest point \(q\) in \(F_{s_i}\) to
     \(p\) when \(\tau\) is small enough. In particular, we have
     \[
         p=\exp_q(\tau v)
     \]
     for some \(v\in \mathcal N_qF_{s_i}\) with \(|v|=1\).

     We make the following discussions according to the structure of
     $$\mathcal N_q^1F_{s_i}:=\{w\in \mathcal N_qF_{s_i}:|w|=1\}.$$
     
  \noindent   {\bf Case 1:} \(\mathcal N_q^1F_{s_i}=\{v\}\). 
     
   \vspace{3mm}  In this case, we can take the nearest point \(q_\Sigma\) in \(\Sigma\) to
         \(q\) and a unit-speed minimizing geodesic
         \[
             \gamma:[0,s_i]\to \overline\Omega
         \]
         connecting \(q\) to \(q_\Sigma\). It is easy to check $\gamma'(0)\in \mathcal N_q^1F_{s_i}$ and so
         \(\gamma'(0)=v\). This yields that
         \(\partial D_{s_i-\tau}\) touches \(\Sigma\) at the point
         \(q_\Sigma\), where
         \[
             D_{s_i-\tau}:=\{x\in\Omega:\dist(x,D)<s_i-\tau\}\subset \Omega.
         \]
         As before, from the Riccati equation we have
         \[
             H_{\partial D_{s_i-\tau}}(q_\Sigma)
             <\delta'+C_*(s_i-\tau)<\delta,
         \]
         which leads to a contradiction to Lemma \ref{Lem: barrier Sigma}.

          \vspace{3mm} \noindent{\bf Case 2:} \(\mathcal N_q^1F_{s_i}=\{v,-v\}\).  \vspace{3mm}

          From the same argument as in Case 1, we know that one vector in $\mathcal N_q^1F_{s_i}$, say $v$, is
         realized by the initial speed of a minimizing geodesic from \(q\) to \(\Sigma\). 
         
         We claim
         that \(-v\) is also realized by a minimizing geodesic from \(q\) to
         \(\Sigma\). Note that the geodesic $\zeta(t)$ from $q$ along $-v$ satisfies $\zeta(t)\notin F_{s_i}$ for $t>0$ small enough. In particular, we have $d_\Sigma(\zeta(t))<s_i$.  Applying
         the same argument as in the proof of Berger's lemma
         \cite[Chapter 13, Lemma 4.1]{doCarmo92}, there is a sequence $t_j\to 0$ such that any minimizing geodesic $\gamma_j$ connecting $\zeta(t_j)$ to $\Sigma$ satisfies $$\langle\gamma_j'(0), \zeta'(t_j)\rangle\geq 0.$$
         %with the short segment
        % joining \(q_j\) to \(q\) in place of the curve \(\lambda\),
        Letting $j\to\infty$, we obtain
         a limit minimizing geodesic from \(q\) to \(\Sigma\) whose initial speed \(w\) satisfies
         \[
             \langle w,v\rangle\leq0.
         \]
         Since \(w\in \mathcal N_q^1F_{s_i}\), it
         follows \(w=-v\). This proves the claim.

         Since both \(v\) and \(-v\) are realized by minimizing geodesics from
         \(q\) to \(\Sigma\), the Clarke differential
         \((\partial_C d_\Sigma)(q)\) contains two opposite unit vectors, and so we have
         \(
             0\in(\partial_C d_\Sigma)(q)
         \),
         contradicting our choice of \(s_i\) as a regular value of
         \(d_\Sigma\).

      \vspace{3mm}
     \noindent {\bf Case 3:} \(\mathcal N_q^1F_{s_i}\) has dimension at least one.  \vspace{3mm}

         In this case, \(\mathcal N_q^1F_{s_i}\) contains a unit vector \(v_*\) that is not
         parallel to \(v\). By Remark \ref{Rmk: normal cone convex} and Lemma
         \ref{Lem: nearest point preserving along normal direction}, for
         small \(\theta\) the curve
         \[
             \zeta(\theta)
             =
             \exp_q\left(
             \tau\frac{v+\theta v_*}{|v+\theta v_*|}
             \right)
         \]
         is contained in \(\Sigma_{i,\tau}\). Since
         \(\Sigma_{i,\tau}\) touches \(\partial D\) at \(p\) from
         outside, it follows from the Hessian comparison theorem and also the
         comparison principle that the largest principal curvature of
         \(\partial D\) at \(p\) with respect to the outward-pointing unit
         normal is no less than \(1/(2\tau)\) if \(\tau\) is small enough.

         On the other hand, by Lemma \ref{Lem: C11 hypersurface} and the
         comparison principle, we know that the smallest principal curvature
         of \(\partial D\) at \(p\) with respect to the outward-pointing unit
         normal is no less than \(-C\) for some universal constant \(C>0\)
         independent of \(\tau\in (0,\tau_*/2)\). From these facts, by taking
         \(\tau\) small enough we can guarantee
         \[
             H_{\partial D}(p)\geq \delta,
         \]
         which contradicts to our assumption \(H_{\partial D}(p)<\delta'\).
\end{proof}

Now, we are ready to prove Proposition \ref{Prop: mean-convex-smoothing}, where we will use the theory of soap bubbles to realize the second regularity improvement.

\begin{proof}[Proof of Proposition \ref{Prop: mean-convex-smoothing}]
By Lemma \ref{Lem: barrier Omega i,tau}, for each $l$ we can take some 
$$\Omega_{i,\tau}=\{q\in \Omega:\dist(q,F_{s_i})< \tau\}$$
 such that $\Sigma_{i,\tau}$ is a strict $(\delta-l^{-1})$-barrier in $\overline{\Omega}_{i,\tau}$, by taking $i$ large enough and $\tau$ small enough. Moreover, we can guarantee that
 $\Omega\setminus F_{s_i}$ lies in the $\min\{l^{-1},s_*\}$-neighborhood of $\Sigma$, where $s_*$ is the constant coming from \eqref{Eq: boundary distance gap}.
 
 Let $\sigma\in (0,\tau/2)$ be a constant. In the following, we will consider the band region
 $$V_\sigma=\overline\Omega_{i,\tau}\setminus\Omega_{i,\tau-\sigma}\mbox{ where }\partial V_\sigma=\Sigma_{i,\tau}\cup \Sigma_{i,\tau-\sigma}.$$
 It follows from the previous paragraph and Lemma \ref{Lem: C11 hypersurface} that 
 \begin{itemize}
 \item $\Sigma_{i,\tau}$ is a strict $(\delta-l^{-1})$-barrier in $V_\sigma$;
 \item $\Sigma_{i,\tau-\sigma}$ is a $(-C)$-barrier in $V_\sigma$ for some universal positive constant $C$ independent of $\sigma$.
 \end{itemize}
 Take a smooth function
 $$h_\sigma:V_\sigma\to [\delta-l^{-1},C+\delta]$$
 such that $h_\sigma\equiv \delta-l^{-1}$ around $\Sigma_{i,\tau}$ and $h_\sigma\equiv C+\delta$ around $\Sigma_{i,\tau-\sigma}$. Define
 $$\mathcal C_\sigma:=\{\mbox{Caccioppoli sets }U\mbox{ with }U\Delta\Omega_{i,\tau-\sigma/2}\subset V_\sigma\}$$
 and
 $$\mathcal A_\sigma(U)=\mathcal H^{n-1}(\partial^*U\cap V_\sigma)-\int_{U\cap V_\sigma}h_\sigma\,\mathrm d\mathcal H^n\mbox{ for }U\in \mathcal C_\sigma.$$
 
 \vspace{3mm}
 {\bf Claim 1}. There is a minimizer $U_\sigma$ of $\mathcal A_\sigma$ in $\mathcal C_\sigma$ such that $\partial U_\sigma\cap V_\sigma$ has mean curvature 
 $$H=h_\sigma|_{\partial U_\sigma}.$$
 
  \vspace{3mm}
 Let us follow the proof of \cite[Lemma 7]{Ilm}. For $\varepsilon$ small, we denote 
 $$V_{\sigma,\varepsilon}=\overline\Omega_{i,\tau-\varepsilon}\setminus\Omega_{i,\tau-\sigma+\varepsilon}.$$ The basic idea is to minimize the functional $\mathcal A_\sigma$ in the approximation class
 $$\mathcal C_{\sigma,\varepsilon}:=\{\mbox{Caccioppoli sets }U\mbox{ with }U\Delta\Omega_{i,\tau-\sigma/2}\subset V_{\sigma,\varepsilon}\}$$
 and to show that the minimizers of $\mathcal A_\sigma$ in the approximation class always avoid a fixed neighborhood of $\partial V_\sigma$.
 
From the same proof as Lemma \ref{Lem: barrier F_s}, we conclude that there is a constant $\varepsilon_0>0$ such that $\Sigma_{i,\tau-\varepsilon}$ and $\Sigma_{i,\tau-\sigma+\varepsilon}$ are strict $(\delta-l^{-1})$-barrier and strict $[-(C+\delta)]$-barrier in $V_{\sigma,\varepsilon}$, respectively, for all $\varepsilon\in (0,\varepsilon_0)$. From the geometric measure theory, there is a minimizer $U_{\sigma,\varepsilon}$ of $\mathcal A_\sigma$ in $\mathcal C_{\sigma,\varepsilon}$. 

In the following, we show that $\partial U_{\sigma,\varepsilon}$ always avoids the $\varepsilon_0$-neighborhood of $\partial V_\sigma$. Otherwise, there is a point 
$x\in \partial U_{\sigma,\varepsilon}$ such that
 $$\dist(x,\partial V_\sigma)=\dist(\partial U_{\sigma,\varepsilon},\partial V_\sigma)<\varepsilon_0.$$ 
Take a constant $r>0$ small enough such that the mean curvature of $\partial B_r(x)$ satisfies
\begin{equation}\label{Eq: mean curvature small ball}
H_{\partial B_r(x)}>\|h_\sigma\|_{L^\infty}.
\end{equation}
Then we minimize the functional $\mathcal A_\sigma$ among the Caccioppoli sets $U$ satisfying $U\Delta U_{\sigma,\varepsilon}\Subset B_r(x)$. Denote the minimizer by $\tilde U_{\sigma,\varepsilon}$. From \eqref{Eq: mean curvature small ball}, $\partial B_r(x)\setminus \partial U_{\sigma,\varepsilon}$ serves as barriers, and so we have $$\partial \tilde U_{\sigma,\varepsilon}\cap \partial B_r(x)=\partial U_{\sigma,\varepsilon}\cap \partial B_r(x).$$

We claim 
\begin{equation}\label{Eq: distance comparison}
\dist(\partial \tilde U_{\sigma,\varepsilon},\partial V_\sigma)\geq \dist(\partial U_{\sigma,\varepsilon},\partial V_\sigma).
\end{equation}
Otherwise, we can find a point $\tilde x\in \partial \tilde U_{\sigma,\varepsilon}\cap B_r(x)$ such that 
$$\tilde d:=\dist(\tilde x,\partial V_\sigma)=\dist(\partial \tilde U_{\sigma,\varepsilon},\partial V_\sigma)<\varepsilon_0.$$
In particular, we can find a small exterior or interior geodesic ball of $\tilde U_{\sigma,\varepsilon}$ touching $\partial \tilde U_{\sigma,\varepsilon}$ at the point $\tilde x$. Therefore, $\partial \tilde U_{\sigma,\varepsilon}$ is smooth around $\tilde x$ and from the first variation we have 
$$H_{\partial \tilde U_{\sigma,\varepsilon}}(\tilde x)=h_\sigma(\tilde x).$$
Since we have $$\delta-l^{-1}\leq h_\sigma\leq C+\delta,$$ this contradicts to the fact that $\Sigma_{i,\tau-\tilde d}$ and $\Sigma_{i,\tau-\sigma+\tilde d}$ are strict $(\delta-l^{-1})$-barrier and strict $[-(C+\delta)]$-barrier in $V_{\sigma,\tilde d}$, respectively.

From the distance comparison \eqref{Eq: distance comparison} we obtain $\tilde U_{\sigma,\varepsilon}\in \mathcal C_{\sigma,\varepsilon}$, and then from a direct comparison argument we see
$$\mathcal A_\sigma^{B_r(x)}(\tilde U_{\sigma,\varepsilon})\geq \mathcal A_\sigma^{B_r(x)}( U_{\sigma,\varepsilon}) \geq \mathcal A_\sigma^{B_r(x)}(\tilde U_{\sigma,\varepsilon}),$$
where 
$$\mathcal A_\sigma^{B_r(x)}( U)=\mathcal H^{n-1}(\partial^*U\cap B_r(x))-\int_{U\cap B_r(x)}h_\sigma\,\mathrm d\mathcal H^n.$$
This means that $U_{\sigma,\varepsilon}$ is a minimizer of $\mathcal A_\sigma$ without constraint in $B_r(x)$ as well. The same argument as we made for $\tilde U_{\sigma,\varepsilon}$ leads to a contradiction. 

We have proven that $\partial U_{\sigma,\varepsilon}$ always avoids the $\varepsilon_0$-neighborhood of $\partial V_\sigma$. Let us take $U_\sigma$ to be the limit of $U_{\sigma,\varepsilon}$ as $\varepsilon\to 0$ up to a subsequence. Then $U_\sigma$ is a minimizer of $\mathcal A_\sigma$ in $\mathcal C_{\sigma}$, which is also in $\mathcal C_{\sigma,\varepsilon_0}$. From the first variation, we conclude that $\partial U_\sigma\cap V_\sigma$ has mean curvature 
$$H=h_\sigma|_{\partial U_\sigma}.$$
 
 \vspace{3mm}
 {\bf Claim 2}. For $\sigma$ small enough, $\mathcal S_\sigma:=\partial U_\sigma\cap V_\sigma$ is smooth.

\vspace{3mm}
Recall that the mean curvature of $\mathcal S_\sigma$ is uniformly bounded by a constant independent of $\sigma$. By the Allard regularity from the geometric measure theory and the elliptic regularity theory, there are constants $r_0>0$ and $\theta_0>0$ (independent of $\sigma$) such that, for any $x\in \mathcal S_\sigma$, if we have 
\begin{equation}\label{Eq: density estimate}
\Theta(x,r):=\frac{\mathcal H^{n-1}(\mathcal S_\sigma\cap B_r(x))}{\omega_{n-1} r^{n-1}}<1+\theta_0
\end{equation} 
for some $r\in (0,r_0)$, then $\mathcal S_\sigma$ is smooth around $x$. Therefore, it suffices to establish the density estimate \eqref{Eq: density estimate} for any $x\in \mathcal S_\sigma$.

In the following, we are going to work in the Fermi coordinate. By Lemma \ref{Lem: nearest point preserving along normal direction} we have the one-to-one map
$$\Phi:\Sigma_{i,\tau}\times [\tau-\sigma,\tau]\to V_\sigma,\,(p,t)\mapsto \exp_{q_p}(tv_p),$$
where $(q_p,v_p)$ is the unique pair from Lemma \ref{Lem: unique pair} with $E=F_{s_i}$. Note that the map $\Phi$ is $C^1$ since it can be considered as the $C^1$-flow generated by the continuous vector field $V$ given by \eqref{Eq: continuous vector field}. Since the hypersurfaces $\Sigma_{i,t}$ with $t \in [\tau-\sigma,\tau]$ have uniform $C^{1,1}$-estimates, the metric $g$ has the form
$$g=\mathrm dt^2+g_t,$$
where $\{g_t\}_{t\in [\tau-\sigma,\tau]}$ is a Lipschitz family of metrics on $\Sigma_{i,\tau}$  satisfying
$$e^{-\Lambda(\tau-t)}g_\tau\leq g_t\leq e^{\Lambda(\tau-t)}g_\tau$$
for some universal constant $\Lambda>0$. We will also use the canonical projection map
$$\pi:V_\sigma\to \Sigma_{i,\tau}.$$
Since $\Sigma_{i,\tau}$ is a $C^{1,1}$-hypersurface, we are able to take a constant $r_1\in (0,r_0)$ such that the $g_\tau$-geodesic ball $D_{r_1}(y)$ on $\Sigma_{i,\tau}$ satisfies
$$\frac{\mathcal H^{n-1}_{g_\tau}(D_{r_1}(y))}{\omega_{n-1}r_1^{n-1}}\leq 1+\frac{\theta_0}{2}\mbox{ for any }y\in \Sigma_{i,\tau}.$$
Note that for any $x\in \partial U_\sigma\cap V_\sigma$ we have 
$$\pi(B_r(x)\cap V_\sigma)\subset D_{\alpha r}(y),$$
where $y=\pi(x)$ and $\alpha=e^{\Lambda\sigma}$. Take $r=r_1/\alpha$. By a comparison argument, we can derive
\[
\begin{split}
\mathcal H^{n-1}(\mathcal S_\sigma\cap B_r(x))
\leq &\,\mathcal H^{n-1}_{g_\tau}(D_{r_1}(y))+\mathcal H^{n-2}_{g_\tau}(\partial D_{r_1}(y))\cdot\sigma e^{(n-2)\Lambda \sigma/2}\\&+\|h_\sigma\|_{L^\infty}\cdot \mathcal H^{n-1}_{g_\tau}(D_{r_1}(y))\cdot\sigma e^{(n-1)\Lambda \sigma/2}.
\end{split}
\]
This yields the estimate (uniform for $x$)
$$\Theta(x,r)\leq 1+\theta_0/2+o(1)\mbox{ as }\sigma\to 0.$$
By taking $\sigma$ small enough, we obtain the desired density estimate \eqref{Eq: density estimate}, and so $\mathcal S_\sigma$ is smooth. This completes the proof for Claim 2.

\vspace{3mm}
We are ready to complete the proof by taking $$\Omega_l:=U_\sigma.$$ 
Let us verify the desired properties one by one. From our construction, we have $F_{s_i}\subset \Omega_l\subset F_{s_i'}$ for positive constants $s_i $ and $ s_i'$ less than $s_*:=\dist(\Sigma,\partial\Omega\setminus \Sigma)$. This implies $\partial\Omega_l\cap\partial\Omega=\partial\Omega\setminus \Sigma$ and $\Sigma_l:=\partial\Omega_l\cap\Omega=\mathcal S_\sigma$, which is a smooth hypersurface having mean curvature
$$H\geq \delta-l^{-1}$$
with respect to the outward-pointing unit normal. The same reason gives that $\Omega\setminus \Omega_l$ lies in the $l^{-1}$-neighborhood of $\Sigma$ by taking $s_i$ small enough. Let $\Sigma_\infty$ denote the Hausdorff limit of $\Sigma_l$ up to a subsequence. To conclude that $\Sigma_l$ converges to $\Sigma$ in the Hausdorff sense, it suffices to show $\Sigma_\infty=\Sigma$. On the one hand, since $\Sigma_l$ lies in the $l^{-1}$-neighborhood of $\Sigma$, we have $\Sigma_\infty\subset \Sigma$. On the other hand, since $\Sigma_l$ is homologous to $\Sigma$, the projection from $\Sigma_l$ to $\mathbf \Lambda_{\rho}(\Omega,\Sigma)$ is surjective when $l$ is large enough for any fixed $\rho>0$, and then we have $\mathbf\Lambda(\Omega,\Sigma)\subset \Sigma_\infty$. Since $\Sigma$ consists of boundary components of $\Omega$, we have $\overline{\mathbf\Lambda(\Omega,\Sigma)}=\Sigma$, and the previous discussion yields $\Sigma_\infty=\Sigma$ as desired.

It remains to show
$$\mathcal H^{n-1}(\Sigma_l)\to \mathcal H^{n-1}(\mathbf \Lambda(\Omega,\Sigma))\mbox{ as }l\to \infty.$$
By a diagonal argument, it suffices to check step by step the following facts:
\begin{itemize}
\item[(i)] $\mathcal H^{n-1}(\Sigma_{s_i})\to \mathcal H^{n-1}(\mathbf \Lambda(\Omega,\Sigma))$ as $i\to\infty$;
\item[(ii)] $\mathcal H^{n-1}(\Sigma_{i,\tau})\to \mathcal H^{n-1}(\Sigma_{s_i})$ as $\tau\to 0$;
\item[(iii)] and
$\mathcal H^{n-1}(\mathcal S_\sigma)\to\mathcal H^{n-1}(\Sigma_{i,\tau})$ as \(\sigma\to0\).
\end{itemize}

Let us prove (i). Since $\Sigma_{s_i}$ is a subset of the image under the exponential map from a smooth section of the normal bundle of $\mathbf\Lambda(\Omega,\Sigma)$, $\Sigma_{s_i}$ is $(n-1)$-rectifiable. On one hand, portions of $\Sigma_{s_i}$ can be written as smooth graphs over $\mathbf \Lambda_{\rho}(\Omega,\Sigma)$ for $i$ large enough, and so we have 
$$\liminf_{i\to \infty} \mathcal H^{n-1}(\Sigma_{s_i})\geq \mathcal H^{n-1}(\mathbf \Lambda_{\rho}(\Omega,\Sigma))\to \mathcal H^{n-1}(\mathbf \Lambda(\Omega,\Sigma)).$$
On the other hand, we have from the Riccati equation, the mean curvature of $\mathbf \Lambda_{\rho}(\Omega,\Sigma)$, and the area formula that
$$\mathcal H^{n-1}(\Sigma_{s_i})\leq e^{C_*s_i^2}\cdot\mathcal H^{n-1}(\mathbf \Lambda_{s_i}(\Omega,\Sigma)),$$
where $C_*$ denotes the bound of the Ricci curvature around $\Sigma$. Clearly, we have 
$$\limsup_{i\to \infty} \mathcal H^{n-1}(\Sigma_{s_i})\leq \mathcal H^{n-1}(\mathbf \Lambda(\Omega,\Sigma)),$$
and then (i) follows. 

For (ii), we use the following Steiner formula \cite[Theorem 5.6]{Fed59} for sets with positive reach established by Federer
$$\mathcal H^n(\Omega_{i,\tau}\setminus F_{s_i})=\sum_{j=0}^n\tau^{n-j}|B_1^{n-j}|\psi_j(F_{s_i}),$$
where $\psi_j$ are curvature measures.
By taking derivative, we have
$$\mathcal H^{n-1}(\Sigma_{i,\tau})=\sum_{j=0}^{n-1}\tau^{n-j-1}(n-j)|B_1^{n-j}|\psi_j(F_{s_i}),$$
and so
$$\mathcal H^{n-1}(\Sigma_{i,\tau})\to 2\psi_{n-1}(F_{s_i})\mbox{ as }\tau \to 0.$$
 By \cite[Theorem 8]{ACV08} we know
\[
  2\psi_{n-1}(F_{s_i})=  \int_{\Sigma_{s_i}}
    \mathcal H^0\bigl(\mathcal N_q^1F_{s_i}\bigr)
    \,d\mathcal H^{n-1}(q).
\]
From \cite[Remark 4.15]{Fed59}, we know that $\mathcal N_qF_{s_i}$ has dimension one $\mathcal H^{n-1}$-almost everywhere. Combined with the proof of Lemma
\ref{Lem: barrier Omega i,tau}, 
\(
    \mathcal N_q^1F_{s_i}
\) contains only one unit vector. Therefore, we obtain
\[
    \mathcal H^{n-1}(\Sigma_{i,\tau})
    \to
    2\psi_{n-1}(F_{s_i})=\mathcal H^{n-1}(\Sigma_{s_i})
\]
as desired.

It remains to prove (iii). Comparing the minimizer \(U_\sigma\) with \(\Omega_{i,\tau}\) gives
\[
    \mathcal H^{n-1}(\mathcal S_\sigma)
    \leq
    \mathcal H^{n-1}(\Sigma_{i,\tau})
    +
    \|h_\sigma\|_{L^\infty}\cdot\mathcal H^n(V_\sigma).
\]
Since \(\mathcal H^n(V_\sigma)\to0\) as $\sigma\to 0$, this gives 
\[
\limsup_{\sigma\to 0}\mathcal H^{n-1}(\mathcal S_\sigma)\leq \mathcal H^{n-1}(\Sigma_{i,\tau}).
\]
On the other hand, we can estimate the lower bound for \(\mathcal H^{n-1}(\mathcal S_\sigma)\). Use the projection
\[
    \pi:V_\sigma\to\Sigma_{i,\tau}
\]
from Claim 2. Since \(\mathcal S_\sigma\) separates the two boundary components of \(V_\sigma\), its image under the projection map is the entire \(\Sigma_{i,\tau}\). Using the fact
\[
    e^{-\Lambda(\tau-t)}g_\tau\leq g_t\leq e^{\Lambda(\tau-t)}g_\tau \mbox{ for } t\in[\tau-\sigma,\tau],
\]
we obtain
\[
    \mathcal H^{n-1}(\mathcal S_\sigma)
    \geq
    e^{-\frac{(n-1)\Lambda\sigma}{2}}
    \mathcal H^{n-1}(\Sigma_{i,\tau}).
\]
This yields
\[
\liminf_{\sigma\to 0}\mathcal H^{n-1}(\mathcal S_\sigma)\geq \mathcal H^{n-1}(\Sigma_{i,\tau}),
\]
and we complete the proof of (iii).
 \end{proof}

The preceding smoothing method also gives another solution to Lawson's mean-convex approximation problem. Lawson asked \cite[Problem~5.7]{Bro86} whether, for every stable minimal hypercone
$C\subset\mathbb{R}^n$ and every $\varepsilon>0$, there exists a properly embedded smooth hypersurface with positive mean curvature whose intersection with $B_1(0)$ is within Hausdorff distance $\varepsilon$ of $C\cap B_1(0)$. Wang \cite[Theorems~1.1--1.3]{Wan24} answered this affirmatively using global one-sided constructions based on minimizing hypersurfaces and mean-convex self-expanders. The following corollary obtains the same local approximation for stationary boundaries and requires neither stability nor a conical structure.

\begin{corollary}
Let $B'\Subset B$ be Euclidean balls and let $E\subset\mathbb{R}^n$ be an open set of locally finite perimeter. Suppose that $\partial E$ is stationary in $B$. Then there exist open sets $E_j\subset E$ such that $\partial E_j\cap B'$ is smooth and
strictly mean-convex with respect to the outward-pointing unit normal. Moreover, $\chi_{E_j}\to\chi_E$ in $L^1(B')$, $\partial E_j\to\partial E$ in the Hausdorff sense on $\overline{B'}$, and $\mathcal{H}^{n-1}(\partial E_j\cap B')\to \mathcal{H}^{n-1}(\partial E\cap B')$.
\end{corollary}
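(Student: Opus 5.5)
The plan is to reduce to the setting of Proposition \ref{Prop: mean-convex-smoothing}, with the machinery of Section \ref{Sec: smoothing} run locally inside $B$. Since $\partial E$ is stationary in $B$, I would first perturb $E$ slightly inward so that the new boundary is strictly mean-convex in the viscosity sense. Concretely, fix an intermediate ball $B''$ with $B'\Subset B''\Subset B$ and a small $\varepsilon>0$. I would minimize the prescribed mean curvature functional
$$\mathcal F_\varepsilon(U)=\mathcal H^{n-1}(\partial^*U\cap B'')-\varepsilon\,\mathcal H^n(U\cap B'')$$
among Caccioppoli sets $U\subset E$ with $U\Delta E\Subset B''$, that is, using $E$ itself as an outer obstacle.

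First step: analyse the minimizer $U_\varepsilon$. At points of $\partial U_\varepsilon\cap E\cap B''$ the first variation gives $H=\varepsilon$ with respect to the outward normal. At contact points with $\partial E$ I expect a problem, since $\partial E$ is stationary ($H=0<\varepsilon$). By the strong maximum principle for stationary varifolds touched from one side (Solomon--White / Ilmanen), $\partial U_\varepsilon$ should therefore detach from $\partial E$ inside $B''$, except possibly at a region near $\partial B''$.

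To handle that boundary region, I would instead use a prescribed function $h_\varepsilon$ that equals $\varepsilon$ on $B'$ and decreases to $0$ near $\partial B''$. Then the contact is harmless away from $B'$. In parallel, a calibration/comparison argument gives
$$\mathcal H^{n-1}(\partial U_\varepsilon\cap B'')\le\mathcal H^{n-1}(\partial E\cap B'')$$
up to the error $\varepsilon|B''|$. Combined with $L^1$ convergence $U_\varepsilon\to E$, which follows because $\partial E$ is stationary and hence locally minimizing from the inside among competitors, this should give convergence of areas and Hausdorff convergence via monotonicity.

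Second step: regularize. On $B'$ the boundary $\Sigma=\partial U_\varepsilon\cap B'$ is an almost-minimizer for $\mathcal F$, so it is smooth off a closed set of codimension $\ge 7$. Exactly as in the proof of Proposition \ref{Prop: mean concave neighborhood}, points of $\mathbf\Lambda(U_\varepsilon,\Sigma)$ are touched by interior balls, so they are regular by the half-space property/blow-up. Hence $\Sigma$ is quasi-regular relative to $U_\varepsilon$ with $H\ge\varepsilon$ on $\mathbf\Lambda$.

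I would then apply the localized version of Proposition \ref{Prop: mean-convex-smoothing}: inward distance sets $F_s$ with positive reach via Lemma \ref{Lem: positive reach modification}, barriers via Lemmas \ref{Lem: barrier F_s} and \ref{Lem: barrier Omega i,tau}, and the thin-band soap bubble. The output is $E_j\subset U_{\varepsilon_j}$ with smooth boundary in $B'$, $H\ge\varepsilon_j/2>0$, within Hausdorff distance $j^{-1}$, and with area converging. A diagonal argument in $(\varepsilon,l)$ then gives all the stated convergences.

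The main obstacle is localization. Proposition \ref{Prop: mean-convex-smoothing} assumes $\Sigma$ is a compact union of boundary components. Here the relevant piece of $\partial E$ meets $\partial B''$, so I must cut off. My first attempt is to run the positive-reach and band construction only over $B'''$ with $B'\Subset B'''\Subset B''$, and to fix boundary data on the annulus by letting the band degenerate there. Smoothness and mean convexity are only claimed in $B'$, and the Allard density estimate is local, so a cutoff of the band width should suffice. The detachment from the obstacle at singular contact points is the other delicate point; there I would rely on Ilmanen's viscosity maximum principle, as in Lemma \ref{Lem: barrier Sigma}.
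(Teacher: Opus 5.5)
\textbf{There is a genuine gap: your first step produces no strict mean convexity, and everything after it depends on that.}

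In the obstacle problem for $\mathcal F_\varepsilon$ (or its variant with a cutoff $h_\varepsilon\ge 0$) among $U\subset E$, nothing forces the minimizer off $\partial E$.
\begin{itemize}
\item At a contact point, admissible variations can only push inward, so the variational inequality gives only $H\le\varepsilon$.
\item Touching $\partial E$ from inside gives $H\ge H_{\partial E}=0$.
\end{itemize}
These are compatible, as a ball touching a hyperplane from one side shows. The Solomon--White/Ilmanen maximum principle only forbids a stationary surface from touching a mean-convex one from the side into which its mean curvature vector points. Here $\partial E$ lies on the opposite side of $\partial U_\varepsilon$, so there is no contradiction.

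Concretely, take $E=\{x_n<0\}$. Calibrating with $e_n$ gives $\mathcal H^{n-1}(\partial^*U\cap B'')\ge\mathcal H^{n-1}(\partial E\cap B'')$ for every admissible $U$. Also $\int_{U\cap B''}h_\varepsilon\le\int_{E\cap B''}h_\varepsilon$. Hence $E$ itself is the minimizer, and $\partial U_\varepsilon\cap B'$ is the flat, minimal piece of $\partial E$. The same happens whenever $\partial E$ is one-sided minimizing from inside. That is exactly the property you invoke for the $L^1$ convergence, and stationarity does not give it in general.

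Consequently, the hypothesis you need for Proposition~\ref{Prop: mean-convex-smoothing}, namely $H\ge\varepsilon$ on $\mathbf\Lambda(U_\varepsilon,\Sigma)$, fails at the contact points. For a minimizer to detach, the obstacle must be a $c$-barrier for some $c$ strictly larger than the prescribed curvature. A stationary boundary is only a $0$-barrier, so you could only prescribe negative curvature, which has the wrong sign.

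\textbf{The missing idea is a mechanism that upgrades the $0$-barrier $\partial E$ to a strict one.} Euclidean inner parallel sets do not do this, since the parallels of a hyperplane stay minimal. The paper uses Ilmanen's conformal trick:
\begin{itemize}
\item For $u=1+|x|^2$ one has $Ku=\inf_{|v|=1}(\Delta u-D^2u(v,v))=2(n-1)>0$.
\item By \cite[Lemma~6]{Ilm}, the inner parallel sets $\Omega_r=\{x\in\Omega:d_u(x,\partial\Omega)>r\}$, taken with respect to $u^{-2}g_{\mathrm{Euc}}$, have strict $0$-barrier boundaries for small $r$.
\item Since $d_u$ is semiconcave, the Sard/Bangert argument of Lemma~\ref{Lem: positive reach modification} picks $r_j\downarrow 0$ with $\overline{\Omega_{r_j}}$ of positive reach.
\item The band soap-bubble argument of Proposition~\ref{Prop: mean-convex-smoothing} then runs with a positive $\delta$.
\end{itemize}

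The localization is also simpler than your cutoff of the band. Take $\Omega=E\cap B_0$ with $B'\Subset B_0\Subset B$ and $\partial B_0$ transverse to $\partial^*E$. Then $\partial\Omega$ is compact, and the strictly convex sphere $\partial B_0$ keeps $\partial\Omega$ a $0$-barrier in $\overline\Omega$. The corner is invisible from $\Omega$, and quasi-regularity follows from the interior tangent-ball argument together with the stationary half-space property \cite[Lemma~10]{Ilm}.
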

\begin{proof}
Choose a ball $B_0$ with $B'\Subset B_0\Subset B$ such that $\partial B_0$ is transverse to $\partial^* E$, and set $\Omega=E\cap B_0$. By \cite[Lemma~1]{Ilm}, $\partial E\cap B$ is a $0$-barrier, whereas $\partial B_0$ is strictly mean-convex. Hence $\partial\Omega$ is a $0$-barrier in $\overline{\Omega}$.

The same interior tangent-ball and blow-up argument as in the proof of Proposition~2.1, using the stationary half-space property \cite[Lemma~10]{Ilm}, shows that every point of $\mathbf\Lambda(\Omega,\partial\Omega)\cap\partial E$ is regular. The remaining visible points lie on the smooth part of $\partial B_0$, while the transverse corner is not visible from $\Omega$. Thus $\partial\Omega$ is compact and quasi-regular relative to $\Omega$.

Take $u(x)=1+|x|^2$ on a neighborhood of $\overline{\Omega}$, and let $d_u$ be the distance induced by $u^{-2}g_{\mathrm{Euc}}$. Since
\[
Ku=\inf_{|v|=1}\bigl(\Delta u-D^2u(v,v)\bigr)=2(n-1)>0,
\]
\cite[Lemma~6, in particular (23)]{Ilm} shows that
\[
\Omega_r=\{x\in\Omega:d_u(x,\partial\Omega)>r\}
\]
has strict $0$-barrier boundary in $\overline{\Omega_r}$ for every sufficiently small $r>0$.

The argument of Lemma~3.19 also applies to $d_u$, which is  semiconcave. We may therefore choose $r_j\downarrow0$ such that $\overline{\Omega_{r_j}}$ has positive reach. Applying the smoothing argument in the proof of Proposition~2.6 to $\Omega_{r_j}$ and choosing the approximation parameters diagonally, we obtain open sets $E_j\subset\Omega_{r_j}\subset E$ whose boundaries are smooth and strictly mean-convex in $B'$ and for which all the stated convergences hold.
\end{proof}

\begin{remark}
In Euclidean space, the restriction to balls is used only to produce a compact auxiliary boundary to which the preceding argument applies directly. Localizing the positive-reach regularization and subsequent smoothing gives the same conclusion on arbitrary relatively compact open subsets. The same argument works on sufficiently small Riemannian balls. Indeed, since $D^2d_p^2=2g+O(\rho^2)$ and $\operatorname{Ric}$ is locally bounded, $u=1+A d_p^2$ satisfies $Ku>0$ on $B_\rho(p)$ for $A$ sufficiently large and then $\rho$ sufficiently small.
\end{remark}

\section{Escaping geodesic line}

The goal of this section is to prove Theorem \ref{Thm: escaping geodesic line}. Let us briefly recall the set-up.
Let $(M,g)$ be a complete non-compact Riemannian manifold with a fixed increasing compact exhaustion $\{K_l\}_{l\geq 1}$. A collection 
$$E=\{V_l\}_{l\geq 1}$$
is called an end of $M$ if each $V_l$ is an unbounded component of the complement $M\setminus K_l$ and we have $V_{l+1}\subset V_l$ for all $l\geq 1$. Given any geodesic line 
$$\gamma:(-\infty,+\infty)\to (M,g),$$ we will abuse the notation
$$\gamma(+\infty)\in E$$
to mean that there is a sequence of positive constants $s_l\uparrow +\infty$ such that 
$$\gamma([s_l,+\infty))\subset V_l$$
for all $l\geq 1$. We also abuse the notation $\gamma(-\infty)\in E$ in a similar way.

First we recall the following well-known lemma.
\begin{lemma}\label{Lem: two ends geodesic}
If $(M,g)$ has two different ends, denoted by $E_-$ and $E_+$, then there is a geodesic line
$$\gamma:(-\infty,+\infty)\to (M,g)$$
such that $\gamma(-\infty)\in E_-$ and $\gamma(+\infty)\in E_+$.
\end{lemma}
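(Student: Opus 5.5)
The plan is the standard limiting argument with minimizing segments. Since $E_-\neq E_+$, there is an index $l_0$ such that the components $V^-_{l_0}$ and $V^+_{l_0}$ of $M\setminus K_{l_0}$ representing $E_-$ and $E_+$ are distinct. In particular, $V^-_{l}\cap V^+_{l}=\emptyset$ for all $l\geq l_0$. Since each $V^\pm_l$ is unbounded, we can pick points $x_k\in V^-_{k}$ and $y_k\in V^+_{k}$ for $k\geq l_0$ with $x_k,y_k\to\infty$; for instance, choose them outside $K_k$ and at distance at least $k$ from a fixed base point. By completeness (Hopf--Rinow) we join $x_k$ to $y_k$ by a unit-speed minimizing geodesic $\sigma_k$.

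The key observation is that $\sigma_k$ must meet the compact set $K_{l_0}$. Indeed, $x_k$ and $y_k$ lie in different components of $M\setminus K_{l_0}$, so any path between them crosses $K_{l_0}$. We reparametrize $\sigma_k:[-a_k,b_k]\to M$ so that $\sigma_k(0)\in K_{l_0}$. Then $a_k\geq \dist(x_k,K_{l_0})\to\infty$ and likewise $b_k\to\infty$. Since $\sigma_k(0)$ and $\sigma_k'(0)$ lie in the compact unit tangent bundle over $K_{l_0}$, a subsequence converges to some $(p,v)$. We set $\gamma(t)=\exp_p(tv)$. By continuous dependence of geodesics on initial data, $\sigma_k\to\gamma$ uniformly on compact intervals, and the identity $\dist(\sigma_k(s),\sigma_k(t))=|s-t|$ passes to the limit. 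Hence $\gamma$ is a geodesic line.

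The main obstacle is identifying the ends of $\gamma$, that is, showing $\gamma(+\infty)\in E_+$ and $\gamma(-\infty)\in E_-$. Because $\gamma$ is a line, $\dist(\gamma(t),\gamma(0))=|t|$, so $\gamma$ leaves every compact set. For each $l$, the ray $\gamma([T,+\infty))$ therefore lies in a single unbounded component of $M\setminus K_l$ once $T$ is large. We need that component to be $V^+_l$.

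The plan is a limiting argument on $\sigma_k$. Fix $l\geq l_0$ and choose $T$ with $\gamma([T,\infty))\cap K_l=\emptyset$. I then need to find some $t\geq T$ with $\gamma(t)\in V^+_l$. The difficulty is that $\sigma_k|_{[t,b_k]}$ might re-enter $K_l$, so it does not obviously connect $\gamma(t)$ to $y_k$ inside $M\setminus K_l$. I would handle this by using minimality. Any point $\sigma_k(s)$ with $s\geq t$ lies within distance $\dist(\sigma_k(0),K_l)+\diam K_l$ of $\sigma_k(0)$ whenever $\sigma_k(s)\in K_l$. Hence every re-entry into $K_l$ occurs at a parameter $s\leq S_l$, where $S_l$ is a bound independent of $k$ (roughly $\diam K_l$ plus the distance from $K_{l_0}$ to $K_l$). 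Taking $t>\max(T,S_l)$, the segment $\sigma_k([t,b_k])$ avoids $K_l$, so $\sigma_k(t)\in V^+_l$ for large $k$, because $y_k\in V^+_k\subset V^+_l$. Since $\sigma_k(t)\to\gamma(t)$ and $\gamma(t)$ lies in the open set $M\setminus K_l$, it belongs to the same component, so $\gamma(t)\in V^+_l$. The same bound shows $\gamma([S_l,\infty))\cap K_l=\emptyset$, which gives $\gamma([s_l,\infty))\subset V^+_l$ with $s_l$ increasing. The argument for $-\infty$ and $E_-$ is symmetric.
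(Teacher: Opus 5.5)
Your proposal is correct and follows essentially the same route as the paper: minimizing segments between points of $V^-_k$ and $V^+_k$, normalized to pass through $K_{l_0}$, converge subsequentially to a line. The tails are then placed in $V^\pm_l$ via the bound $|s|\leq \diam K_l$ (from minimality and $\sigma_k(0)\in K_l$) on the parameters where $\sigma_k$ can meet $K_l$, exactly as in the paper's choice $s_i=\diam K_i$. Your use of the openness of components of $M\setminus K_l$ to pass $\sigma_k(t)\in V^+_l$ to the limit $\gamma(t)\in V^+_l$ makes explicit a step the paper leaves implicit.
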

\begin{proof}
Denote
$$E_-=\{V_{l}^-\}_{l=1}^\infty\mbox{ and }E_+=\{V_{l}^+\}_{l=1}^\infty.$$
Then we can take two sequences of points
$$\{p_l^-\}_{l=1}^\infty\mbox{ and } \{p_l^+\}_{l=1}^\infty$$
 such that $p_l^-\in V_{l}^-$ and $p_l^+\in V_{l}^+$. It follows from the Hopf-Rinow theorem that there are unit-speed minimizing geodesic segments
$$\gamma_l:[-s_l',s_l'']\to (M,g)$$
connecting $p_l^-$ to $p_l^+$. Let $l_0$ denote the smallest index among those $l$, where $V_{l,-}$ and $V_{l,+}$ are different. Then $\gamma_l$ must intersect with $K_{l_0}$ for all $l\geq l_0$, and so we can always make the normalization $\gamma_l(0)\in K_{l_0}$. Since the endpoints $p_l^-$ and $p_l^+$ diverge to the infinity, we have $s_l',s_l''\to+\infty$. Up to a subsequence, $\gamma_l$ converge to a geodesic line
$\gamma:(-\infty,+\infty)\to (M,g)$ in $C^\infty_{loc}$-sense with $\gamma(0)\in K_{l_0}$.

Next let us show $\gamma(-\infty)\in E_-$. Fix $i\geq l_0$ and we are going to determine $s_i$ such that 
$\gamma((-\infty,-s_i])\subset V_{i}^-$. Take $s_i$ to be the diameter of $K_i$. Since we have $\gamma_l(0)\in K_{l_0}\subset K_i$ for all $l\geq i$, the geodesic segment $\gamma_l([-s_l',-s_{i}])$ lies in the same component of $M\setminus K_{i}$. Note that we have 
$$\gamma_l(-s_l')\in V_{l}^-\subset V_i^-.$$
Therefore, we know 
$$\gamma_l([-s_l',-s_{i}])\subset V_{i}^-.$$
As the limit of $\gamma_l(-s_i-1)$, we have $\gamma(-s_i-1)\in V_i^-$, and the same argument yields $\gamma((-\infty,-s_i])\subset V_{i}^-$ as desired.

Similarly, we have $\gamma(+\infty)\in E_+$, and the proof is completed.
\end{proof}

We are now ready to prove Theorem \ref{Thm: escaping geodesic line}.
\begin{proof}[Proof of Theorem \ref{Thm: escaping geodesic line}]
Since $e(M)=+\infty$, we can take a sequence of ends
$$\{E_i\}_{i=1}^\infty,$$
which are pairwise different. For each $l\geq 1$, by Lemma \ref{Lem: two ends geodesic} we can construct geodesic lines 
$$\gamma_l:(-\infty,+\infty)\to (M,g)$$
such that $\gamma_{l}(-\infty)\in E_{2l-1}$ and $\gamma_l(+\infty)\in E_{2l}$. If there is some geodesic line $\gamma_l$ not touching $K$, then we are done. Then we just need to consider the case where all the geodesic lines $\gamma_l$ intersect with $K$.

For normalization, we may assume $\gamma_l(0)\in K$ for all $l$. Up to subsequence, $\gamma_l$ converge to a limit geodesic line
$$\gamma_\infty:(-\infty,+\infty)\to (M,g)\mbox{ with }\gamma_\infty(0)\in K.$$
Let $D$ be any positive constant. Then we can take indices $l_1$ and $l_2$ large enough such that
$$\dist(\gamma_{l_1}(D), \gamma_{l_2}(D))<1.$$
Take a sequence of positive constants $s_i\to +\infty$ as $i\to \infty$. Let $\zeta_i$ be a minimizing geodesic segment connecting $\gamma_{l_1}(s_i)$ and $\gamma_{l_2}(s_i)$.

\vspace{3mm}
 {\bf Claim}. If $D$ is large enough, then $\zeta_i$ does not touch the $1$-neighborhood of $K$.
 
  \vspace{3mm}
  
  Suppose not, then $\zeta_i$ has the length estimate
  $$L(\zeta_i)\geq  \dist(\gamma_{l_1}(s_i),K)+\dist(\gamma_{l_2}(s_i),K)-2\geq2(s_i-\diam K-1).$$
  On the other hand, we have the estimate
  $$L(\zeta_i)\leq 2(s_i-D)+1.$$
  By taking the constant $D\geq \diam K+2$, we obtain a contradiction.
  
  \vspace{3mm}
  As in the proof of Lemma \ref{Lem: two ends geodesic}, the geodesic segments $\zeta_i$ converge to a geodesic line $\zeta$ with $\zeta(-\infty)\in E_{2l_1}$ and $\zeta(+\infty)\in E_{2l_2}$ up to a subsequence, which lies outside $1$-neighborhood of $K$ from the claim above. The proof is completed.
\end{proof}

\section{Applications}
\subsection{Revisit results for Ricci curvature}
In this subsection, we will give new proofs for two classical theorems with nonnegative Ricci curvature based on our curvature-free results. 

First, let us focus on the minimal volume growth theorem for complete non-compact Riemannian manifolds with nonnegative Ricci curvature.
\begin{theorem}[Calabi, Yau]\label{Thm: Calabi-Yau}
Let $(M,g)$ be a complete and non-compact Riemannian manifold with nonnegative Ricci curvature, then $(M,g)$ has at least linear volume growth. That is, we have
$$\liminf_{r\to +\infty}\frac{\vol_g(B_r(p))}{r}>0$$
for all $p\in M$.
\end{theorem}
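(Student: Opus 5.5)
Argue by contradiction. Suppose $(M,g)$ has nonnegative Ricci curvature and sublinear lower volume growth. By Theorem \ref{Thm: mean concave exhaustion} there is a smooth bounded open set $U$ containing a fixed point $p$, with $\partial U$ mean-concave: $H\le 0$ with respect to the unit normal $\nu$ pointing toward infinity. The plan is to produce a hypersurface near infinity whose mean curvature is strictly positive. Comparing it with $\partial U$ (or pushing $\partial U$ outward) should then give a contradiction. No volume comparison is used anywhere.

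\emph{Step 1: a ray.} Since $M$ is complete and non-compact, there is a ray $\gamma:[0,\infty)\to M$. Form its Busemann function $b(x)=\lim_{t\to\infty}(t-\dist(x,\gamma(t)))$. Under $\Ric\ge 0$, the Laplacian comparison for $\dist(\cdot,\gamma(t))$ gives $\Delta \dist(\cdot,\gamma(t))\le (n-1)/\dist(\cdot,\gamma(t))$ in the barrier sense. This is a pointwise Riccati-equation fact along geodesics, not Bishop--Gromov. It yields $\Delta b\ge 0$ in the barrier sense. Moreover $b$ is $1$-Lipschitz and proper from below along $\gamma$, since $b(\gamma(s))=s$.

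\emph{Step 2: strict positivity from the distance-sphere barriers.} To get strict positivity I would use the spheres directly rather than $b$. Take $t$ large with $\gamma(t)$ far outside $\bar U$, and let $R=\max_{\bar U}\dist(\cdot,\gamma(t))$. The closed ball $\bar B_R(\gamma(t))$ contains $\bar U$, and its boundary touches $\partial U$ at some point $x_0$, where $\dist(\cdot,\gamma(t))$ attains its maximum over $\bar U$. At $x_0$, $U$ lies inside the ball. If $x_0$ is not a cut point of $\gamma(t)$, the sphere is smooth there. Riccati comparison with $\Ric\ge 0$ shows that the sphere's mean curvature with respect to the normal pointing away from $\gamma(t)$ satisfies
\[
0< H_{\text{sphere}}(x_0)\le \frac{n-1}{R}.
\]
The positive lower bound is the crux. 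Riccati only bounds $H$ from above, so I would instead use the maximum principle in the other direction. Near $x_0$, $\partial U$ lies inside the sphere and shares the same outward normal. Here "outward" agrees with $\nu$, the normal toward infinity, because the direction away from $\gamma(t)$ points away from $U$. The comparison therefore gives $H_{\partial U}(x_0)\ge H_{\text{sphere}}(x_0)$, which is the wrong direction for an upper bound on the sphere.

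\emph{Step 3: reverse the roles.} So take instead the point $y_0\in\partial U$ realizing $\min_{\bar U^c}$, i.e.\ the point of $\partial U$ closest to $\gamma(t)$, with $r_0=\dist(y_0,\gamma(t))$. The ball $B_{r_0}(\gamma(t))$ lies in $M\setminus U$ and touches $\partial U$ at $y_0$ from outside $U$. At $y_0$, the normal of $\partial U$ pointing toward infinity points into the ball; call it $-\partial_r$. Comparison of hypersurfaces then gives
\[
H_{\partial U}(y_0;\nu)\ \ge\ H_{\text{sphere}}(y_0;-\partial_r)=-\Delta\dist(\cdot,\gamma(t))(y_0)\ \ge\ -\frac{n-1}{r_0}.
\]
Combined with $H\le 0$, this is consistent, so no contradiction arises yet. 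I will therefore strengthen Theorem \ref{Thm: mean concave exhaustion} by using the quantitative version from Section 2: the corollary there gives $H\le-\delta$ on the smoothed boundary. Since $\delta$ was fixed by the soap-bubble construction before choosing $\gamma(t)$, letting $t\to\infty$ sends $r_0\to\infty$ and produces
\[
-\delta\ \ge\ -\frac{n-1}{r_0}\longrightarrow 0,
\]
a contradiction. Cut points at $y_0$ are handled by Calabi's trick: replace $\gamma(t)$ by a point slightly along the minimizing geodesic from $y_0$ toward $\gamma(t)$. The main obstacle is to ensure that the constant $\delta$ in Proposition \ref{Prop: mean concave neighborhood} survives Proposition \ref{Prop: mean-convex-smoothing} ($H\ge\delta-l^{-1}$ on the complement side, i.e.\ $H\le-\delta/2$ on $\partial U$). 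It must also stay independent of the later choice of $t$, which holds because $U$ is fixed first.
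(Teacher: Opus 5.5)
Your Step 3 argument is correct, but it follows a different route from the paper. The paper uses only the qualitative conclusion $H\le 0$ of Theorem~\ref{Thm: mean concave exhaustion}. It sets $E=M\setminus U$, which is complete and non-compact with $\Ric\ge 0$ and compact mean-convex boundary. It then invokes the Croke--Kleiner half-cylinder splitting theorem to get $E\cong N\times[0,+\infty)$, which has linear volume growth.

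You instead use the quantitative bound that the construction actually gives. The soap bubble has $H=h-\delta\le-\delta$, and the smoothing in the proof of Proposition~\ref{Prop: mean concave neighborhood} yields $H\le-\delta/2$ on all of $\partial U$ (take the minimum of the finitely many $\delta$'s, one per end of $M\setminus K$). You then contradict this with an exterior touching sphere and the Laplacian comparison $\Delta d\le (n-1)/d$. This gives $-\delta/2\ge H_{\partial U}(y_0)\ge -(n-1)/r_0$ with $r_0\to\infty$, and your orientations and the direction of the touching comparison are right.

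What each route buys:
\begin{itemize}
\item Your route replaces a rigidity theorem by an elementary maximum-principle argument.
\item However, it depends on the proof of Proposition~\ref{Prop: mean concave neighborhood}, not just the statement of Theorem~\ref{Thm: mean concave exhaustion}. It also cannot handle the borderline case $H\le 0$, as you noticed.
\item The paper's route, through a half-cylinder splitting, is the one that carries over to the scalar-curvature setting (Proposition~\ref{Prop: half-splitting} and Theorem~\ref{Thm: minimal volume scalar}). There, no mean-curvature comparison for distance spheres is available.
\end{itemize}

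Two cleanups for a write-up:
\begin{itemize}
\item Steps 1--2 should be dropped. The Busemann function is never used, and any sequence of points tending to infinity can replace $\gamma(t)$.
\item Calabi's trick should move the centre a small distance $\epsilon$ from $\gamma(t)$ toward $y_0$. The resulting sphere of radius $r_0-\epsilon$ still avoids $U$ and is smooth at $y_0$. Taking the centre near $y_0$ instead would make the sphere small and the bound useless.
\end{itemize}
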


Our new proof is based on the existence of mean-concave region.
\begin{proof}
Suppose by contradiction that we have
$$\liminf_{r\to +\infty}\frac{\vol_g(B_r(p))}{r}=0$$
for some $p\in M$. It follows from Theorem \ref{Thm: mean concave exhaustion} that there is a smooth bounded region $U$ such that $\partial U$ is mean-concave with respect to the unit normal pointing to the infinity. Take
$$E=M\setminus U.$$
Then $E$ is a complete and non-compact Riemannian manifold with compact mean-convex boundary, which has nonnegative Ricci curvature. It follows from the half-cylinder splitting theorem proved by Croke and Kleiner \cite{CK92} that $E$ splits isometrically as $$(N,h)\times [0,+\infty)$$ for some closed Riemannian manifold $(N,h)$. In particular, we have
$$\liminf_{r\to +\infty}\frac{\vol_g(B_r(p))}{r}>0,$$
which gives the desired contradiction.
\end{proof}

Next, let us focus on the finite-ends theorem.

\begin{theorem}[Cai, Li--Tam]
Let $(M,g)$ be a complete Riemannian manifold with nonnegative Ricci curvature outside a compact subset. Then $(M,g)$ has finitely many ends.
\end{theorem}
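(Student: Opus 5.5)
We argue by contradiction, using Theorem \ref{Thm: escaping geodesic line} in place of the Bishop--Gromov volume comparison. Suppose $\operatorname{Ric}\geq 0$ on $M\setminus K_0$ for some compact $K_0$, but $e(M)=+\infty$. Note that $M$ is non-compact, since it has ends.

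Apply Theorem \ref{Thm: escaping geodesic line} with the compact set $K$ taken to be a closed $1$-neighborhood of $K_0$. This produces a geodesic line $\gamma:(-\infty,+\infty)\to M\setminus K$. Every point of $\gamma$ then lies at distance at least $1$ from $K_0$. The plan is to run the Cheeger--Gromoll splitting argument along $\gamma$ in an open region where $\operatorname{Ric}\geq0$.

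Step one is to form the Busemann functions $b_\pm(x)=\lim_{t\to\infty}\bigl(t-\dist(x,\gamma(\pm t))\bigr)$. We have $b_++b_-\leq 0$ on $M$, with equality on $\gamma$. Step two is the Laplacian comparison for $\dist(\cdot,\gamma(\pm t))$. This needs $\operatorname{Ric}\geq0$ along the minimizing geodesics from $x$ to $\gamma(\pm t)$, not just at $x$. This is the main obstacle, because those segments may pass through $K_0$.

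To handle it, I would follow the proof of Theorem \ref{Thm: escaping geodesic line} more closely. That proof produces $\gamma$ as a limit of minimizing segments $\zeta_i$ which avoid the $1$-neighborhood of $K$, and $K$ may be enlarged arbitrarily. I would then restrict attention to points $x$ in a small tubular neighborhood of $\gamma$. For such $x$, the asymptotic rays toward $\gamma(\pm\infty)$ stay close to the $\zeta_i$, and hence also avoid $K_0$, by a triangle-inequality estimate like the Claim in that proof. On this neighborhood we obtain $\Delta b_\pm\geq 0$ in the barrier (Calabi) sense. Therefore $b_++b_-$ is superharmonic, attains its maximum $0$ at interior points along $\gamma$, and by the strong maximum principle vanishes on the connected open set $\mathcal O$ where the comparison holds.

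From this, elliptic regularity makes $b_+$ smooth and harmonic on $\mathcal O$ with $|\nabla b_+|=1$. The Bochner formula together with $\operatorname{Ric}\geq 0$ then gives $\nabla^2 b_+=0$, so $\mathcal O$ locally splits isometrically as $N\times\mathbb R$. Propagating the splitting by continuation along the flow of $\nabla b_+$, the neighborhood in which the comparison holds grows, and the line has a product neighborhood with cross-section constant in $t$.

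The contradiction should come from the ends structure. The two ends $E_{2l_1}$ and $E_{2l_2}$ that $\zeta$ connects are different. Yet a product region $N_0\times\mathbb R$ around $\gamma$ that avoids $K_0$ would have to be separated by the compact set $K_{l}$ that distinguishes these ends. The product region connects both sides within $M\setminus K_0$, and the part of $\gamma$ that crosses $K_l$ carries a slice $N_0\times\{t\}$ of the splitting. I would push this toward showing that $E_{2l_1}$ and $E_{2l_2}$ are in fact joined outside $K_l$, contradicting their distinctness.

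I expect this final topological step, together with keeping the minimizing segments in the region where Ricci curvature is nonnegative, to need the most care.
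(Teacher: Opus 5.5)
There is a genuine gap, and it sits in the step you flag as last. A product neighborhood of a line joining two distinct ends gives no contradiction. Every property of $\gamma$ that your final step uses is realized in the flat cylinder $(\mathbb R/10\mathbb Z)\times\mathbb R$ with $K_0=\{(0,0)\}$ and $\gamma(t)=(5,t)$:
\begin{itemize}
\item $\Ric\geq 0$ everywhere;
\item $\gamma$ is a line at distance $5$ from $K_0$;
\item $\gamma$ joins the two distinct ends;
\item $\gamma$ even has a global product neighborhood.
\end{itemize}
Yet the ends are not joined outside a compact set separating them. The tube around $\gamma$ crosses that compact set together with $\gamma$, so it gives no path between the ends in the complement. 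Your contradiction uses only one line and the two ends it connects. Since ``$\Ric\geq0$ near a line between two ends'' is consistent, the hypothesis $e(M)=+\infty$ enters only through the existence of $\gamma$, and no contradiction can come out.

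There are secondary problems as well. The claimed ``propagation'' to a product neighborhood with constant cross-section has no mechanism behind it. The Claim in the proof of Theorem \ref{Thm: escaping geodesic line} controls only the segments $\zeta_i$, not minimizing segments from nearby points $x$ to $\gamma(\pm t)$. An open neighborhood of $\gamma$ on which the asymptotic rays avoid $K_0$ can be obtained by an excess argument, using that $b_+$ is differentiable along $\gamma$, but its width need not be uniform. Also, $\Delta b_\pm\geq 0$ makes $b_++b_-$ subharmonic, not superharmonic.

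What is missing is a source of strictness. Even done correctly, your Busemann--Bochner analysis only yields $\nabla^2 b_+=0$ and $\Ric(\gamma',\gamma')=0$ along $\gamma$. The second variation formula gives the same information directly. For a line, $\int\Ric(\gamma',\gamma')\phi^2\,\mathrm dt\leq (n-1)\int(\phi')^2\,\mathrm dt$ for compactly supported Lipschitz $\phi$. Cutoffs with $\int(\phi')^2\,\mathrm dt\to0$ then force $\Ric(\gamma',\gamma')\equiv0$ wherever it is nonnegative.

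The paper turns this into a contradiction in two steps. It first deforms $g$, via Ehrlich's deformation, to have strictly positive Ricci curvature outside a compact set $K$; the number of ends is topological, so it is unchanged. It then applies Theorem \ref{Thm: escaping geodesic line} to obtain a line outside $K$, along which $\Ric(\gamma',\gamma')$ must vanish, which is impossible. No Busemann functions, splitting, or ends topology are needed beyond the escaping line. Without such a positivity step, you would have to exploit the infinitely many ends quantitatively, as in Cai's volume argument or Li--Tam's harmonic-function argument.
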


Our new proof is based on the existence of escaping geodesic lines.

\begin{proof}
Suppose, to the contrary, that \(e(M)=+\infty\). By \cite{Ehrlich76I}, we can deform the metric $g$ slightly such that $(M,g)$ has positive Ricci curvature outside a compact subset $K$. By Theorem \ref{Thm: escaping geodesic line} there is a geodesic line $\gamma$ outside $K$. But from the second variation we can deduce that the tangential Ricci curvatures $\Ric(\gamma',\gamma')$ vanish along $\gamma$, which leads to a contradiction.
\end{proof}

\subsection{New results for scalar and holomorphic sectional curvatures}
In this subsection, we will extend the idea above to establish a minimal volume growth theorem for scalar curvature, and also a finite-ends theorem for holomorphic sectional curvature.

\subsubsection{Minimal volume growth theorem for scalar curvature}

We will derive the volume-growth statement from the following half-cylinder splitting theorem.

\begin{proposition}\label{Prop: half-splitting}
Let $(M^n,g)$, $2\leq n\leq 7$, be a complete non-compact Riemannian manifold with compact mean-convex boundary. If $(M,g)$ has nonnegative scalar curvature and any hypersurface separating the boundary $\partial M$ to the infinity admits no smooth metric with positive scalar curvature, then $(M,g)$ splits isometrically as the Riemannian product $$(N,h)\times [0,+\infty),$$
where $(N,h)$ is a closed Riemannian manifold.
\end{proposition}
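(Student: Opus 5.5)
We prove the half-cylinder splitting through a $\mu$-bubble/stable hypersurface argument combined with a rigidity (foliation) step, in the spirit of scalar-curvature splitting theorems. The idea is this. If $\partial M$ is not itself a minimal hypersurface with a product neighborhood, we produce a stable separating hypersurface with a strict curvature inequality. Stability then forces a conformal metric of positive scalar curvature on it, contradicting the hypothesis. The dimension range $2\leq n\leq 7$ guarantees that all minimizers are smooth.

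\emph{Step 1: Minimal slices near infinity, or a contradiction.} Fix a proper exhaustion function $\rho$ with $\rho^{-1}(0)=\partial M$. For $T$ large, minimize a $\mu$-bubble functional
\[
\mathcal A(\Omega)=\mathcal H^{n-1}(\partial^*\Omega\cap \mathring M)-\int_\Omega h\,\mathrm d\mathcal H^n
\]
among Caccioppoli sets $\Omega\supset\partial M$ contained in $\{\rho<T\}$. Here $h=\eta\circ\rho$ with $h\equiv 0$ on $\{\rho\leq T-1\}$ and $h\to-\infty$ as $\rho\to T$, exactly as in Section 2. The mean-convexity of $\partial M$ serves as a barrier, in the sense of Lemma \ref{Lem: barrier Sigma}, so the minimizer does not touch $\partial M$ unless it coincides with a component. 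The condition $h\to-\infty$ keeps it away from $\{\rho=T\}$. A component $\Sigma$ of the reduced boundary separating $\partial M$ from infinity is smooth and satisfies $H=h\leq 0$. The second variation then gives, for all $\varphi\in C^\infty(\Sigma)$,
\[
\int_\Sigma |\nabla\varphi|^2+\tfrac12(R_\Sigma-R_M-|A|^2-H^2-2\partial_\nu h)\varphi^2\geq 0 ,
\]
using the standard Schoen--Yau rearrangement. If we choose $\eta$ with $\eta'\leq 0$ and $\eta^2/2+\eta'$ controlled, the weighted operator $-\Delta+\tfrac12 R_\Sigma$ minus a nonnegative potential is nonnegative. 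If that potential is positive somewhere, the first eigenfunction of the conformal Laplacian on $\Sigma$ is positive, so $\Sigma$ carries a metric of positive scalar curvature, contradicting the hypothesis. For $n=3$ we use Gauss--Bonnet instead, and for $n=2$ a length argument. Hence the potential vanishes identically: $R_M=0$, $A=0$, $H=h=0$ and $\partial_\nu h=0$ along $\Sigma$. In particular $\Sigma$ lies in $\{h=0\}$, and it is a totally geodesic, area-minimizing hypersurface relative to the compact region it bounds with $\partial M$.

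\emph{Step 2: Rigidity and foliation.} Run the same argument with the boundary as the competitor. The region between $\partial M$ and $\Sigma$ is compact with mean-convex inner boundary and minimal outer boundary. Construct a local foliation by constant-mean-curvature leaves near $\Sigma$, following the standard rigidity arguments of Bray--Brendle--Neves and Cai--Galloway type. Each leaf is homologous to $\Sigma$, hence separating, so it also cannot carry positive scalar curvature. The same stability/first-eigenvalue argument then forces every leaf to be totally geodesic with $\partial_\nu H=0$. This yields a product structure $\Sigma\times(-\epsilon,\epsilon)$ in which $\nu$ is parallel. An open-closed argument extends the product structure up to $\partial M$, which forces $\partial M$ to be minimal. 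Since $T$ is arbitrary, producing $\Sigma_T$ arbitrarily far out and repeating the argument extends the product structure outward to all of $M$. This gives $M\cong N\times[0,+\infty)$ with $N=\partial M$.

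\emph{Main obstacle.} The hardest step is Step 2, the rigidity/foliation. We must show that the region between consecutive rigid slices is exactly a product without any Ricci information, using only scalar curvature and the non-PSC hypothesis applied to every intermediate leaf. We would first try the Bray--Brendle--Neves style CMC foliation with the conformal Laplacian eigenvalue argument. The delicate points are handling $\partial_\nu h$ in the $\mu$-bubble and making sure the minimizer stays off both barriers.
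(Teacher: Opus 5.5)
\textbf{There is a genuine gap in Step 1, and everything downstream depends on it.}

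With $H=h=\eta\circ\rho$ and $|A|^2\ge H^2/(n-1)$, your potential is only bounded below by $\frac12\bigl(R_M+\frac{n}{n-1}\eta^2+2\eta'(\rho)\,\partial_\nu\rho\bigr)$. You do not know in advance where the bubble sits or how its normal is oriented, so $\partial_\nu\rho$ can be as large as $|\nabla\rho|$. At points where $R_M=0$, nonnegativity then forces a differential inequality $\eta'\ge -c\,\eta^2$ on the layer $\{T-1<\rho<T\}$. Here $c>0$ depends on $n$ and on a lower bound for $|\nabla\rho|$.

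Such an $\eta$ cannot become negative after $T-1$. Suppose $\eta(t_2)=0$ and $\eta<0$ on $(t_2,t_1]$. Then $(1/\eta)'\le c$ there, so $1/\eta$ stays bounded below as $t\downarrow t_2$, which is impossible. So an $\eta$ that vanishes on $\{\rho\le T-1\}$ and tends to $-\infty$ must produce a negative potential somewhere in the transition layer whenever $R_M$ merely satisfies $R_M\ge 0$. Nothing prevents the $\mu$-bubble from sitting there.

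Consequently, neither the contradiction nor the conclusions ``potential $\equiv 0$'', ``$\Sigma\subset\{h=0\}$'', ``$\Sigma$ totally geodesic and area-minimizing'' follow; the last of these is circular.

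Step 2 is unsupported even granting Step 1:
\begin{itemize}
\item The CMC leaves of a Bray--Brendle--Neves-type foliation are not stable $\mu$-bubbles, so ``the same stability/first-eigenvalue argument'' does not apply to them. You would first have to show that the leaves are themselves area-minimizing, which is the core of Cai's local splitting theorem for locally volume-minimizing non-PSC hypersurfaces.
\item The globalization via ``$\Sigma_T$ arbitrarily far out'' is not justified, since you have no control over where the bubble lies.
\item For $n=2$ the hypothesis is vacuous, because a circle carries no PSC metric. So no argument based on it, ``length'' or otherwise, can give the splitting; you must use $K\ge 0$ directly.
\end{itemize}

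\textbf{The missing idea is a dichotomy that makes the rigidity/foliation step unnecessary.}
\begin{itemize}
\item If $g$ is Ricci-flat, or $n=2$ (where $R\ge 0$ means $\Ric\ge 0$), the product structure $\partial M\times[0,+\infty)$ follows directly from the Croke--Kleiner half-cylinder splitting theorem for $\Ric\ge 0$ with compact mean-convex boundary.
\item If $g$ is not Ricci-flat, Kazdan's deformation gives a nearby metric with $R>0$ and still mean-convex boundary. The strictly positive term $\min_{\rho^{-1}([0,1])}R_g\cdot\chi_{[0,1]}$ is exactly what your construction lacks: it lets $h$ start negative at $\partial M$. You can then choose $h<0$, $h'<0$ on $[0,L)$ with $h\to-\infty$ at $L$ and
$\frac{n}{n-1}h^2+2h'+\min_{\rho^{-1}([0,1])}R_g\cdot\chi_{[0,1]}>0$.
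The potential is then strictly positive everywhere on the separating $\mu$-bubble, so the bubble carries a PSC metric. This contradicts the hypothesis, so the non-Ricci-flat case never occurs.
\end{itemize}
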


\begin{proof}
If $g$ is Ricci-flat or \(n=2\), then the consequence follows directly from the half-cylinder splitting theorem by Croke and Kleiner  \cite{CK92}. Otherwise, we can deform $g$ slightly as in \cite{Kazdan82} such that $g$ has positive scalar curvature and the boundary is still mean-convex. 

In the following, we are going to construct a separating hypersurface  based on soap bubbles, which admits a smooth metric positive scalar curvature.
By modifying the distance function to $\partial M$, we can construct a proper smooth function $$\rho:M\to [0,+\infty)$$ such that $\rho^{-1}(0)=\partial M$, $\rho(x)\to +\infty$ as $x\to \infty$, and $\Lip\rho<1$. 
Similar as \cite[Lemma 2.3]{Zhu23}, we can construct a smooth function 
$$h:[0,L)\to (-\infty,0)$$ 
such that $h<0, h'<0$, $h(t)\to -\infty$ as $t\to L$, and 
\begin{equation}\label{Eq: modified scalar curvature}
\frac{n}{n-1}h^2+2h'+\min_{\rho^{-1}([0,1])}R_g\cdot \chi_{[0,1]}>0.
\end{equation}
Let $\Sigma$ be any hypersurface in $\rho^{-1}([0,L))$ separating $\partial M$ to the infinity, and $\Omega$ be the region enclosed by $\Sigma$ and $\partial M$. We consider the functional
$$\mathcal A(\Sigma,\Omega)=\mathcal H^{n-1}(\Sigma)-\int_\Omega h\circ \rho\,\mathrm d\mathcal H^n.$$
From the geometric measure theory, when $n\leq 7$ we can find a smooth minimizer $(\Sigma_*,\Omega_*)$ of the functional $\mathcal A$. In particular, (each component of) $\Sigma_*$ satisfies $H=h\circ\rho$ and the following stable inequality
$$\int_{\Sigma_*}|\nabla\phi|^2\,\mathrm d\sigma\geq \int_{\Sigma_*}(\Ric(\nu,\nu)+|A|^2+\partial_\nu(h\circ \rho))\phi^2\,\mathrm d\sigma.$$
Using the Schoen--Yau rearrangement, we can derive
\[
\begin{split}
\int_{\Sigma_*}|\nabla\phi|^2+\frac{1}{2}&R_{\Sigma_*}\phi^2\,\mathrm d\sigma\\
&\geq \frac{1}{2}\int_{\Sigma_*}\left(R_g+\left(\frac{n}{n-1}h^2+2h'\right)\circ\rho\right)\phi^2\,\mathrm d\sigma.
\end{split}
\]
Note that the right-hand side is positive due to \eqref{Eq: modified scalar curvature}.
Then we can construct a smooth metric on $\Sigma_*$ with positive scalar curvature as follows:
\begin{itemize}
\item If $\dim \Sigma_*=2$, then by taking $\phi\equiv 1$ we see that $\Sigma_*$ consists of spheres from the Gauss-Bonnet formula. Therefore, we can take the standard spherical metric as the desired smooth metric with positive scalar curvature.
\item If  $\dim \Sigma_*\geq 3$, then we have
$$\int_{\Sigma_*}|\nabla\phi|^2+\frac{\dim \Sigma_*-2}{4(\dim \Sigma_*-1)}R_{\Sigma_*}\phi^2\,\mathrm d\sigma>0.$$
Let $u$ be the first eigenfunction of the quadratic form above, and we take the conformal metric
$$u^{\frac{4}{\dim\Sigma_*-2}}g_{\Sigma_*},$$
which has positive scalar curvature.
\end{itemize}
This contradicts to our assumption.
\end{proof}

We are ready to prove Theorem \ref{Thm: minimal volume scalar}, Corollary \ref{Cor: contractible}, and Corollary \ref{Cor: aspherical at infinity}.
\begin{proof}[Proof of Theorem  \ref{Thm: minimal volume scalar}]
The proof is exactly the same as that of Theorem \ref{Thm: Calabi-Yau}. Suppose by contradiction that we have
$$\liminf_{r\to +\infty}\frac{\vol_g(B_r(p))}{r}=0$$
for some $p\in M$. It follows from Theorem \ref{Thm: mean concave exhaustion} that there is a smooth bounded region $U$ containing $K$ such that $\partial U$ is mean-concave with respect to the unit normal pointing to the infinity. Take
$$E=M\setminus U.$$
Then $E$ is a complete non-compact Riemannian manifold with compact mean-convex boundary, which has nonnegative scalar curvature. From the assumption, any hypersurface separating $\partial U$ (and hence $K$) to the infinity admits no smooth metric with positive scalar curvature. Then it follows from Proposition \ref{Prop: half-splitting} that $E$ splits isometrically as $$(N,h)\times [0,+\infty)$$ for some closed Riemannian manifold $(N,h)$. In particular, we have
$$\liminf_{r\to +\infty}\frac{\vol_g(B_r(p))}{r}>0,$$
which gives the desired contradiction.
\end{proof}

\begin{proof}[Proof of Corollary \ref{Cor: contractible}]
To show that $M$ is homeomorphic to $\mathbb R^3$, it suffices to prove that $M$ is simply-connected at infinity (see \cite{Stalling71}). That is, for any compact subset $K$ we can find a larger compact subset $\tilde K$ such that the inclusion map 
$$i_*:\pi_1(M\setminus\tilde K)\to \pi_1(M\setminus K)$$ is the zero map.

Since $(M,g)$ has sublinear lower volume growth, by Theorem \ref{Thm: minimal volume scalar} we can find smooth bounded exhaustion $$\{V_l\}_{l=1}^\infty$$ such that $\partial V_l$ consists of spheres. 
For any compact subset $K$, we can take some $V_l$ containing $K$ and choose $\tilde K$ to be a fixed compact subset containing $V_l$. Take any loop $\gamma$ in $M\setminus\tilde K$. Since $M$ is contractible, $\gamma$ bounds a disk $D$ in $M$, which may pass through $K$ a priori. We explain how to modify $D$ to be a new disk outside $K$. By slight perturbation, we may assume that $D$ is transversal to $\partial V_l$, and so $D$ intersects $\partial V_l$ along finitely many circles. These circles bound disks $D_1,\ldots,D_j$ in $D$ and bound $ D_1^*,\ldots, D_j^*$ in $\partial V_l$. We modify $D$ to be
$$\left(D\setminus\left(\bigcup D_i\right)\right)\cup \left(\bigcup D_i^*\right),$$
which is a disk enclosed by $\gamma$ outside $K$. Hence, $M$ is simply connected at infinity, and is homeomorphic to $\mathbb R^3$.
\end{proof}

\begin{proof}[Proof of Corollary \ref{Cor: aspherical at infinity}]
It follows from the previous work \cite{He-Zhu25} that any hypersurface separating $K$ from the infinity cannot admit any smooth metric with positive scalar curvature. The corollary follows from Theorem \ref{Thm: minimal volume scalar}.
\end{proof}

\subsubsection{Finite-ends theorem for holomorphic sectional curvature}

Let us recall some fundamental concepts in Kähler geometry that will be used throughout this section.

A K\"ahler manifold $(M, g, J)$ means a complex manifold $M$ equipped with a Riemannian metric $g$ and a complex structure $J$ that satisfy the following conditions:
\begin{itemize}
\item the complex structure is compatible with the metric, that is, we have  $g(JX, JY) = g(X, Y)$ for all tangent vector fields $X, Y$;
\item the complex structure is parallel, that is, we have $\nabla J = 0$, where $\nabla$ is the Levi-Civita connection of the metric $g$.
\end{itemize}

Given any point $p \in M$ and any unit vector $v \in T_pM$ on a Kähler manifold $(M, g, J)$, the holomorphic sectional curvature $H(v)$ is defined as
\[
H(v) = \Rm(v, Jv, v, Jv)
\]
where $\Rm$ denotes the Riemann curvature tensor of the metric $g$. We say that a K\"ahler manifold has positive holomorphic sectional curvature if we have $H(v)>0$ for all unit vector $v\in T_pM$ at any point $p\in M$.

Let us prove Theorem \ref{Thm: finite ends holomorphic}.
\begin{proof}[Proof of Theorem \ref{Thm: finite ends holomorphic}]
Suppose, to the contrary, that $e(M)=+\infty$. By Theorem \ref{Thm: escaping geodesic line} there is a geodesic line $\gamma$ outside $K$. Since $M$ is Kähler, $\nabla J=0$, and hence
$J\gamma'$ is a parallel variation field along $\gamma$. Therefore, by the
second variation formula, the holomorphic sectional curvature $H(\gamma')$
vanishes along $\gamma$, which leads to a contradiction.
\end{proof}

\appendix
\section{Finite-volume complete manifolds with nonnegative scalar curvature}\label{sec: finite volume}
In this section, we construct a conformally flat metric on $\mathbb R^n$, $n\geq 3$, with nonnegative scalar curvature but finite volume, which implies that additional topological assumptions are necessary for the validity of the minimal volume growth theorem. 
\begin{lemma}
	There is a complete metric $g$ on $\mathbb R^n$, $n\geq 3$, with nonnegative scalar curvature such that $(M,g)$ has finite volume.
\end{lemma}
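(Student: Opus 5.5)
We construct a radial conformally flat metric $g=u^{\frac{4}{n-2}}g_{euc}$ on $\mathbb R^n$ with $u=u(r)>0$ smooth. The conformal Laplacian gives
$$R_g=-\frac{4(n-1)}{n-2}\,u^{-\frac{n+2}{n-2}}\Delta_{euc}u,$$
so nonnegative scalar curvature is equivalent to $u$ being superharmonic, $\Delta u\le 0$. For radial $u$ this reads $(r^{n-1}u')'\le 0$. The volume is $\int u^{\frac{2n}{n-2}}\,dx$, and completeness means that radial rays have infinite length, $\int^\infty u^{\frac{2}{n-2}}\,dr=+\infty$. The task therefore reduces to a one-variable problem: find a positive superharmonic radial $u$ with
$$\int_1^\infty u^{\frac{2n}{n-2}}r^{n-1}\,dr<\infty \quad\text{and}\quad \int_1^\infty u^{\frac{2}{n-2}}\,dr=\infty.$$

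A pure power $u=r^{-a}$ cannot satisfy both conditions. Completeness needs $2a/(n-2)\le 1$, i.e.\ $a\le \tfrac{n-2}{2}$. Finite volume needs $2na/(n-2)>n$, i.e.\ $a>\tfrac{n-2}{2}$. The borderline exponent $a=\tfrac{n-2}{2}$ fails only logarithmically, so I would add a logarithmic correction:
$$u(r)=r^{-\frac{n-2}{2}}(\log r)^{-b}\quad\text{for } r\ge r_0.$$
With this choice:
\begin{itemize}
\item The completeness integrand is $u^{2/(n-2)}=r^{-1}(\log r)^{-2b/(n-2)}$, which diverges provided $2b/(n-2)\le 1$.
\item The volume integrand is $u^{2n/(n-2)}r^{n-1}=r^{-1}(\log r)^{-2nb/(n-2)}$, which converges provided $2nb/(n-2)>1$.
\end{itemize}
Both hold for any $b$ with $\tfrac{n-2}{2n}<b\le\tfrac{n-2}{2}$, for example $b=\tfrac{n-2}{4}$.

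Next I check superharmonicity for large $r$. Write $u=r^{-a}\ell^{-b}$ with $\ell=\log r$ and $a=\tfrac{n-2}{2}$. Then
$$r^{n-1}u'=r^{n-2-a}\bigl(-a\ell^{-b}-b\ell^{-b-1}\bigr)=-r^{a}\bigl(a\ell^{-b}+b\ell^{-b-1}\bigr),$$
using $n-2-a=a$. Differentiating,
$$(r^{n-1}u')'=-r^{a-1}\bigl(a^2\ell^{-b}+O(\ell^{-b-1})\bigr),$$
which is negative for $r$ large because $a>0$, since $n\ge3$.

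The main obstacle is gluing this tail to a smooth positive superharmonic function near the origin. I would choose $r_0$ large, set $\phi(r)=r^{n-1}u'(r)$ on $[r_0,\infty)$, and extend $\phi$ to a smooth nonincreasing function on $[0,\infty)$ that vanishes identically near $r=0$. This is possible because $\phi(r_0)<0$ and $\phi$ is already decreasing on the tail. Then I define $u(r)=u(r_0)-\int_r^{r_0}\phi(s)s^{1-n}\,ds$ for $r<r_0$. This $u$ is constant near $0$, hence smooth at the origin, and satisfies $(r^{n-1}u')'=\phi'\le 0$ everywhere. It is also positive: $u'\le 0$, so $u$ is decreasing and $u\ge u(r_0)>0$ on $[0,r_0]$. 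Finally, on the bounded region the volume is finite and the metric is complete, so completeness and finite volume come from the tail computation alone, and $R_g\ge0$ follows from the formula above.
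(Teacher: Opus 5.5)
Your proposal is correct and is essentially the paper's construction: a radial conformally flat metric $u^{4/(n-2)}g_{euc}$ with $u\sim r^{-(n-2)/2}(\log r)^{-b}$ at infinity, superharmonic for large $r$ and made constant near the origin. The paper takes the endpoint $b=(n-2)/2$, i.e.\ $v=(r\ln r)^{-(n-2)/2}$, and glues by composing with a concave nondecreasing truncation $\zeta$, so that $\Delta(\zeta\circ v)=\zeta''|\nabla v|^2+\zeta'\Delta v\le 0$. You instead extend the flux $\phi=r^{n-1}u'$ monotonically; this works cleanly as $\chi\phi$ with $\chi$ a smooth nondecreasing cutoff, which handles smoothness at $r_0$ because $\phi<0$ and $\phi'<0$ on a slightly larger interval.
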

\begin{proof}
	Let us consider a conformally flat metric $g=u^{\frac{4}{n-2}}g_{euc}$ with $u$ a smooth positive function on $\mathbb R^n$ to be determined. To ensure $(\mathbb R^n,g)$ having nonnegative scalar curvature we just need to guarantee $\Delta u\leq 0$ concerning the formula
	$$
	-\Delta u=c_nR(g)u^{\frac{n+2}{n-2}}.
	$$
	
	The desired function $u$ is constructed as follows. We start with a function
	$$v=\left(r\ln r\right)^{-\frac{n-2}{2}}.$$
	A straight-forward computation gives
	$$
	\Delta v=-\frac{n-2}{2}\left(r\ln r\right)^{-\frac{n+2}{2}}\left(\frac{n-2}{2}\ln^2 r-\frac{n}{2}\right).
	$$
	%Detailed computation for $\Delta v$:
%$$
%	v_i=-\frac{n-2}{2}\left(r\ln r\right)^{-\frac{n}{2}}(\ln r+1)\frac{x_i}{r}
%	$$
%	$$
%	-\frac{2}{n-2}v_{ii}=-\frac{n}{2}\left(r\ln r\right)^{-\frac{n+2}{2}}(\ln r+1)^2\frac{x_i^2}{r^2}+\left(r\ln r\right)^{-\frac{n}{2}}\frac{x_i^2}{r^3}+\left(r\ln r\right)^{-\frac{n}{2}}(\ln r+1)\frac{r^2-x_i^2}{r^3}
%	$$
%	$$
%	-\frac{2}{n-2}\left(r\ln r\right)^{\frac{n+2}{2}}\Delta v=-\frac{n}{2}(\ln r+1)^2+\ln r+(n-1)\ln r(\ln r+1)=\frac{n-2}{2}\ln^2 r-\frac{n}{2}
%	$$
In particular, there is an absolute constant $r_0$ such that $\Delta v<0$ when $r\geq r_0$. Denote $v_0=v(r_0)$. To do composition we have to construct a function $\zeta:[0,+\infty)\to [0,v_0/2]$ satisfying
\begin{itemize}
	\item $\zeta(t)\equiv t$ in a neighborhood of $0$ and $\zeta(t)\equiv const.$ when $t\geq v_0/2$;
	\item $\zeta'(t)\geq 0$ and $\zeta''(t)\leq 0$ for all $t\geq 0$.
\end{itemize}
Such function can be constructed from integration. Take a nonnegative monotone-decreasing function $\eta:[0,+\infty)\to [0,1]$ such that $\eta\equiv 1$ in $[0,v_0/4]$ and $\eta\equiv 0$ in $[v_0/2,+\infty)$. It suffices to define
$$
\zeta(t)=\int_0^t\eta(s)\,\mathrm ds.
$$
Let $u=\zeta\circ v$. Note that $u$ is defined on the whole $\mathbb R^n$ since it is constant in the $r_0$-ball. It is direct to compute
$$
\Delta u=\zeta''|\nabla v|^2+\zeta'\Delta v.
$$
When $r\geq r_0$ it follows from $\Delta v<0$ and the construction of $\zeta$ that $\Delta u\leq 0$. When $r\leq r_0$ we simply have $\Delta u\equiv 0$ due to its constancy.

It remains to verify the completeness and the finite volume of $(\mathbb R^n,g)$. To see the completeness we compute
\begin{equation*}
	\begin{split}
		\dist(O,\infty)&=\int_{0}^{+\infty}u^{\frac{2}{n-2}}\,\mathrm dr\\
		&\geq \int_{r_0}^{+\infty}\frac{1}{r\ln r}\,\mathrm dr=\left.\ln\ln r\right|_{r_0}^{+\infty}=+\infty.
	\end{split}
\end{equation*}
On the other hand, the volume can be computed as
\begin{equation*}
	\begin{split}
		\vol(\mathbb R^n,g)&=\int_{\mathbb R^n}u^{\frac{2n}{n-2}}\,\mathrm dx\\
		&\leq \omega_n r_0^n\left(\frac{v_0}{2}\right)^{\frac{2n}{n-2}}+n\omega_n\int_{r_0}^{+\infty}\frac{1}{r\ln^n r}\,\mathrm dr\\
		&<+\infty.
	\end{split}
\end{equation*}
This completes the proof.
\end{proof}

\bibliographystyle{alpha}

\bibliography{bib}
\end{document}